\theoremstyle{plain}
\newtheorem{theorem}{Theorem}[section]
\newtheorem{corollary}[theorem]{Corollary}
\newtheorem{proposition}[theorem]{Proposition}
\newtheorem{lemma}[theorem]{Lemma}
\theoremstyle{definition}
\newtheorem{definition}[theorem]{Definition}
\newtheorem{remark}[theorem]{Remark}
\theoremstyle{remark}
\numberwithin{theorem}{section}
\numberwithin{equation}{section}
\numberwithin{figure}{section}
\def\R{\mathbb{R}}
\def\N{\mathbb{N}}
\def\1{{\bf 1}}
\def\e{\mathbf{e}}
\def\d{\mathrm{d}}
\def\XXint#1#2#3{{\setbox0=\hbox{$#1{#2#3}{\int}$}
        \vcenter{\hbox{$#2#3$}}\kern-.5\wd0}}
\DeclareMathOperator{\cof}{cof}
\DeclareMathOperator{\conv}{conv}
\DeclareMathOperator{\diag}{diag}
\DeclareMathOperator{\diam}{diam}
\DeclareMathOperator{\dist}{dist}
\DeclareMathOperator{\interior}{int}
\DeclareMathOperator{\Id}{Id}
\DeclareMathOperator{\osc}{osc}
\DeclareMathOperator{\Span}{span}
\DeclareMathOperator{\spt}{spt}
\DeclareMathOperator{\trace}{tr}
\newcommand{\mres}{\mathop{\hbox{\vrule height 7pt width .5pt depth 0pt
\vrule height .5pt width 6pt depth 0pt}}\nolimits}
\begin{document}

\title[Optimal Transports
between Degenerate Densities]{On the Regularity of Optimal Transports \\
between Degenerate Densities}

\author[Y.\ Jhaveri]{Yash Jhaveri}
\address{Columbia University, Department of Mathematics, 2990 Broadway, New York, New York 10027, USA}
\email{yjhaveri@math.columbia.edu}
\author[O.\ Savin]{Ovidiu Savin}
\address{Columbia University, Department of Mathematics, 2990 Broadway, New York, New York 10027, USA}
\email{savin@math.columbia.edu}
\thanks{YJ was supporting in part by NSF grant DMS-1954363. OS was supported in part by NSF grants DMS-1800645 and DMS-2055617.}

\begin{abstract}
We study the most common image and informal description of the optimal transport problem for quadratic cost, also known as the second boundary value problem for the Monge--Amp\`{e}re equation---What is the most efficient way to fill a hole with a given pile of sand?---by proving regularity results for optimal transports between degenerate densities.
In particular, our work contains an analysis of the setting in which holes and sandpiles are represented by absolutely continuous measures concentrated on bounded convex domains whose densities behave like nonnegative powers of the distance functions to the boundaries of these domains.
\end{abstract}
\maketitle


\section{Introduction}

The optimal transport problem, formulated by Gaspard Monge in 1781, asks whether or not it is possible to find a map minimizing the total cost of moving a distribution of mass $f$ to another $g$ given that the cost of moving from $x$ to $y$ is measured by $c = c(x,y)$. 
Since its inception, optimal transportation has drawn together and impacted many areas of mathematics: fluid mechanics, functional analysis, geometry, general relativity, and probability, just to name a few (see, e.g., \cite{BB,FMP,LV,M2,V2}).
The most fundamental case is that of the quadratic cost on $\R^n$, when $c(x,y) = |x-y|^2$ for $x,y \in \R^n$.
It is the model for all sufficiently smooth cost functions on all sufficiently smooth (Riemannian) geometries (\cite{DPF}), and it is at the core of many applications (\cite{V2}).
Precisely, it is 
\[
\min_T \bigg{\{} \int_{\R^n} |x-T(x)|^2 \, \d f(x) : T_{\#} f = g \bigg{\}}.
\]

Under certain conditions on the nonnegative measures $f$ and $g$, Brenier discovered that the optimal transport problem for the quadratic cost on $\R^n$ is uniquely solvable $f$-almost everywhere (\cite{B}; see also \cite{M1}).
Moreover, he characterized minimizing maps as gradient maps of convex potentials: $T_{min} = \nabla u$ for some convex function $u : \R^n \to \R$.
When $f$ and $g$ are absolutely continuous with respect to Lebesgue measure, he also established that any convex potential $u$ defining $T_{min}$ satisfies a Monge--Amp\`{e}re equation,
\[
g(\nabla u)\det D^2 u = f
\text{ and }
\nabla u(\spt f) = \spt g,
\]
in a suitable weak sense (the Brenier sense; see Lemma~\ref{lem: Brenier soln}), where $f = f \, \d x$ and $g = g \, \d y$.
(In this work, we equate absolutely continuous measures with their densities.
It will either be clear from the context or explicitly stated when absolute continuity is assumed.)
In turn, Brenier linked the optimal transport problem and the second boundary value problem for the Monge--Amp\`{e}re equation: given two convex domains and a nonnegative function on their product, find a convex function whose gradient maps one domain onto the other with Jacobian determinant proportional to the given function.

Unfortunately, optimal transports can behave rather poorly.
Indeed, Caffarelli observed that $T_{min} = \nabla u$ can be discontinuous under the seemingly ideal conditions that $f$ and $g$ are the characteristic functions of smooth, bounded domains of equal volume (\cite{C1}; see also \cite{J}).
In principle then, a convex potential of an optimal transport (on the support of the source measure) even between ``nice'' measures is no better than an arbitrary convex function.
That said, in this same work, Caffarelli showed that the optimal transport must be locally H\"{o}lder continuous in $X := \interior(\spt(f))$ under a geometric condition\,---\,$Y := \interior(\spt(g))$ is convex\,---\,and a uniform ellipticity type condition\,---\,the Monge--Amp\`{e}re measure associated to $u$ is doubling in $\spt(f)$.
In subsequent works, Caffarelli established the global H\"{o}lder continuity of $\nabla u$ assuming that both $X$ and $Y$ are convex (\cite{C2}), and the global H\"{o}lder continuity of $D^2 u$, the Hessian of $u$, additionally assuming $X$ and $Y$ are $C^2$ and uniformly convex and $f$ and $g$ are positive and H\"{o}lder continuous in $\overline{X}$ and $\overline{Y}$ respectively (\cite{C3}).

In this nondegenerate setting, Urbas also proved that $D^2 u$ is H\"{o}lder continuous up to $\partial X$ when $X$ and $Y$ are uniformly convex, but under a $C^3$ regularity assumption on $X$ and $Y$ (\cite{U}).
More recently, Chen, Liu, and Wang demonstrated that these domain regularity assumptions can be weakened to $C^{1,1}$ in $n \geq 3$ dimensions and $C^{1,\alpha}$ in two dimensions (\cite{CLW1,CLW2}).
In two dimensions and at the same time as Caffarelli, Delano\"{e} established the existence of globally smooth solutions to the second boundary value problem for the Monge--Amp\`{e}re equation given smooth data (\cite{D}).

In the degenerate setting of arbitrary open, bounded source and target domains, but still considering densities bounded away from zero and infinity, Figalli (\cite{F}), Figalli--Kim (\cite{FK}), Goldman--Otto (\cite{GO}), and Goldman (\cite{G}) showed that the closure of the discontinuity set of an optimal transport, also known as the singular set, has zero measure in $\overline{X}$.

In this paper, we consider a different degenerate setting, one in which $f$ and $g$ are permitted to vanish at times, e.g., continuously at the boundaries of $X$ and $Y$.
This scenario encompasses a study of the most common image and informal description of the optimal transport problem:
\[
\text{\it What is the most efficient way to fill a hole with a given pile of sand?}
\]

Our first result is a global H\"{o}lder continuity regularity result for optimal transports between absolutely continuous, doubling measures (see Section~\ref{sec: prelim} for the definition of a doubling measure in this context).
This doubling assumption is different from Caffarelli's doubling assumption in that it is only on the data of the problem rather than on the data and the solution, as it is in Caffarelli's case.

\begin{theorem}
\label{thm: global Csigma}
Let $X$ and $Y$ be open, bounded convex sets in $\R^n$, and suppose that $f$ and $g$ are densities which define doubling measures concentrated on $X$ and $Y$ respectively.
Let $T_{min}$ be the optimal transport taking $f$ to $g$.
Then $T_{min} \in C^{\sigma}(\overline{X})$, for some $\sigma \in (0,1)$, depending on $n$, the doubling constants of $f$ and $g$, and the inner and outer diameters of $X$ and $Y$.
\end{theorem}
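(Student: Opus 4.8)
The plan is to adapt Caffarelli's regularity programme for the second boundary value problem to the present setting, working throughout with the weak (Brenier) formulation and using the doubling of $f$ and $g$ in place of two-sided density bounds and any regularity of $\partial X$ and $\partial Y$. By Lemma~\ref{lem: Brenier soln} we may write $T_{min}=\nabla u$ for a convex Brenier solution $u$, so that $\overline{\nabla u(X)}=\overline{Y}$ and $\int_{\partial u(E)}g=\int_{E}f$ for every Borel $E\subseteq X$; its Legendre conjugate $u^{*}$ is a convex Brenier solution of the reverse problem transporting $g$ to $f$ (since $\nabla u^{*}=(\nabla u)^{-1}$ a.e.), so all the hypotheses, and hence every estimate below, are symmetric in $(X,f)$ and $(Y,g)$. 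Because the Brenier relation, convexity, the doubling property, and the $C^{\sigma}$ scale are all preserved, with controlled constants, under $u(x)\mapsto u(A^{-1}(x-b))$ for an invertible affine map, we may assume $X$ is in John position, $B_{r}\subseteq X\subseteq B_{nr}$; this induces an affine change of variables on $Y$ as well, under which its inradius and diameter remain controlled by the data. After this change of variables $\nabla u$ need no longer be the quadratic-cost optimal map, but the argument uses only the Brenier relation above, which persists.

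\emph{Strict convexity and interior regularity.} The first point is that $u$ is strictly convex in $X$: no supporting plane of $u$ touches its graph along a segment meeting $X$. If one did, then, analysing a tilted section near an exposed point of that contact set as in Caffarelli's localization lemma, one produces convex sets $K\subseteq X$ that all contain a common nonempty open subset of $X$ yet have $|\partial u(K)|$ arbitrarily small; since then $0<f(K)=\int_{\partial u(K)}g$ while $g$ is absolutely continuous, this is impossible. Strict convexity makes the sections $S_{h}(x_{0}):=\{x:u(x)<u(x_{0})+p\cdot(x-x_{0})+h\}$, $p\in\partial u(x_{0})$, compactly contained in $X$ for $h$ small; applying John's lemma to each $S_{h}$ and using the doubling of $f$ and $g$ to keep the relation among $f(S_{h})=\int_{\partial u(S_{h})}g$, $|S_{h}|$, and $|\partial u(S_{h})|$ under control, one gets the usual volume and engulfing estimates for $\{S_{h}\}$, and hence $u\in C^{1,\alpha}_{\mathrm{loc}}(X)$ with $\alpha$ and the local constants depending only on the admissible quantities. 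By symmetry $u^{*}$ is strictly convex in $Y$, that is, $\nabla u$ has no corner mapping into $Y$.

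\emph{Boundary regularity and conclusion.} It remains to exclude corners of $u$ along $\partial X$. If $\partial u(x_{0})$ were not a singleton for some $x_{0}\in\partial X$, then, being compact and convex, it would contain a nondegenerate segment $[p_{0},p_{1}]\subseteq\overline{\nabla u(X)}=\overline{Y}$; since $x_{0}\in\partial u^{*}(q)$ for every $q\in[p_{0},p_{1}]$, the duality inequalities force $u^{*}$ to be affine along $[p_{0},p_{1}]$ (a common subgradient along the whole segment turns the convexity inequalities for $u^{*}$ into equalities there), so either the open segment meets $Y$, contradicting the strict convexity of $u^{*}$ in $Y$, or $[p_{0},p_{1}]\subseteq\partial Y$ and lies in a supporting hyperplane of $Y$. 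This last case is excluded by a quantitative study of the sections $S_{h}(x_{0})$ for $x_{0}\in\partial X$: using the convexity of $Y$ to place the image $\partial u(S_{h}(x_{0}))$ in good position and the doubling of $f$ and $g$ to keep the duality relation between $|S_{h}(x_{0})|$ and $|\partial u(S_{h}(x_{0}))|$ scale invariant, one shows that $S_{h}(x_{0})$ contracts to $x_{0}$ at a power rate; this rules out the flat contact on $\partial Y$ and simultaneously yields $|\nabla u(x)-\nabla u(x_{0})|\le C|x-x_{0}|^{\sigma}$ uniformly over $x_{0}\in\overline{X}$. Combined with the interior estimate and a covering argument, this gives $T_{min}=\nabla u\in C^{\sigma}(\overline{X})$ with $\sigma$ and $C$ depending only on $n$, the doubling constants of $f$ and $g$, and the inner and outer diameters of $X$ and $Y$.

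\emph{Main obstacle.} The delicate step is the quantitative, up-to-the-boundary control of the sections $S_{h}(x_{0})$ for $x_{0}\in\partial X$. With $X$ and $Y$ arbitrary convex bodies---possibly with flat faces and corners---and $f$ and $g$ allowed to vanish at the boundary, there is no a priori control on $\nabla u$ near $\partial X$, so the shape and contraction rate of the boundary sections must be drawn from the doubling of $f$ and $g$ alone and obtained in a scale-invariant form; only then can the qualitative continuity of $\nabla u$ on the compact set $\overline{X}$ be upgraded to a power-type modulus with the stated dependence. I expect closing these estimates to require a compactness/normalization scheme---rescaling sections to unit size, passing to limits of the normalized solutions, and using that the doubling constants and the John positions of $X$ and $Y$ are stable under such limits.
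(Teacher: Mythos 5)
Your outline follows the right skeleton (Brenier potential, duality, sections, engulfing, iteration to a power modulus), but the step that carries all the content is missing, and you defer it explicitly: the quantitative, scale-invariant control of sections based at points of $\partial X$ using \emph{only} the doubling of $f$ and $g$. You invoke ``the usual volume and engulfing estimates'' for the family $\{S_h\}$, but Caffarelli's theory does not apply off the shelf here: his arguments require the Monge--Amp\`ere measure of $u$ itself to be doubling on sections, which is a hypothesis on the \emph{solution}, whereas the present theorem assumes doubling only of the \emph{data} $f$ and $g$ (this distinction is the whole point of the result). The paper closes exactly this gap with a new Alexandrov-type maximum principle for normalized \emph{centered} sections (Lemma~\ref{lem: AMP}): for a centered section $S^c_h(z)$ with $z\in\overline{X}$, one bounds $|\tilde u(x)|$ by a modulus $\vartheta(d_{\partial\tilde S}(x))$ depending only on $n$ and the doubling constants, by comparing the $g$-mass of the image cone $K=\conv(B_{r|\tilde u|}\cup \tfrac{|\tilde u|}{d}\e)$ with the $g$-mass of a ball through a chain of convex sets $K_m\subset K$ on which the doubling of $g$ is applied, together with mass balance and the doubling of $f$ on $\tilde S$. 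This lemma is what substitutes for the classical $Cd^{1/n}$ Alexandrov bound (unavailable without two-sided density bounds) and powers the engulfing property (Lemma~\ref{lem: engulfing}) and the quantitative strict convexity up to the boundary (Corollary~\ref{cor: strict convexity}); your proposal contains no mechanism playing this role. Note also that at boundary points ordinary sections $S_h(x_0)$ need not be balanced around $x_0$, which is why centered sections are used; your sketch works with ordinary sections and does not address this.

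The fallback you suggest---a compactness/normalization scheme, rescaling sections and passing to limits---would not by itself deliver the statement: the theorem asserts a H\"older exponent $\sigma$ and constants depending only on $n$, the doubling constants, and the inner and outer diameters of $X$ and $Y$, and a soft compactness argument yields at best a qualitative modulus unless it is paired with uniform, scale-invariant estimates on the normalized solutions---which is precisely the estimate that is missing (and blow-up limits of degenerate doubling densities need not lie in any class where a known theory applies). Your case analysis for a putative segment in $\partial u(x_0)$ with $x_0\in\partial X$ (segment meeting $Y$ versus segment in a supporting hyperplane of $\partial Y$) also hinges on the same unproved ``power-rate contraction of boundary sections,'' so it does not reduce the difficulty. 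In short, the proposal reproduces the architecture of Caffarelli's boundary program but omits the new ingredient---the doubling-based maximum principle for centered sections and the resulting boundary engulfing/strict convexity---that makes the program run under the stated hypotheses; in the paper, once Lemma~\ref{lem: AMP} is in place, the conclusion follows by dualizing Corollary~\ref{cor: strict convexity} and iterating Lemma~\ref{lem: engulfing}, much as you anticipate in your final step.
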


Our second result establishes (optimal) global regularity for the optimal transport in the plane when $f$ and $g$ are comparable to nonnegative powers of the distance functions to the boundaries of their supports, which we assume are convex:
\[
f \sim d_{\partial X}^\alpha \text{ for some } \alpha \geq 0 \text{ and } g \sim d_{\partial Y}^\beta \text{ for some } \beta \geq 0.
\]  
In this work, $d_{\partial \ast}$ represents the distance function to the boundary of $\ast$; $d_{\partial \ast} > 0$ in $\ast$ and $d_{\partial \ast} = 0$ outside of $\ast$.
Thus, we assume our sandpile and hole (turned upside down) have precise shapes at their boundaries.

Here we show that the optimal transport effectively splits along the tangential and normal directions to $\partial X$.
Let $u' : [0,1] \to [0,1]$ be the optimal transport taking a density which behaves like $x^\alpha$ near $0$ to another density which behaves like $y^\beta$ near $0$.
Then, by the mass balance formula,
\[
u'(t) \sim t^{\gamma} \text{ with } \gamma := \frac{1+\alpha}{1+\beta}.
\]
In other words, informally, we find that $T_{min}$ behaves like the identity map $t$ moving along the boundary of $X$ and the one dimensional transport $t^{\gamma}$ moving orthogonally in from the boundary of $X$.

In order to precisely state our theorem and expansion, we must define three H\"{o}lder exponents, $\lambda$, $\mu$, and $\omega$, to formalize what we mean by $\sim$ above.
We state our theorem assuming that $\alpha > 0$ and $\beta > 0$, and make a remark after to address the mild difference when either $\alpha = 0$ or $\beta = 0$.
There are two cases to consider.
When $\alpha \geq \beta$, let
\[
\text{$\mu := \lambda \frac{1+\gamma}{2}$ and $\omega := \lambda$, for any fixed $0 < \lambda \leq \min \bigg\{ \alpha \frac{2}{1+\gamma}, \frac{2}{1+\gamma}, \beta \bigg\}$.}
\]
If $\alpha = \beta$, i.e., $\gamma = 1$, we additionally assume that $\lambda < 1$.
On the other hand, when $\alpha < \beta$, set
\[
\text{$\mu := \lambda $ and $\omega := \lambda \frac{1+\gamma}{2\gamma}$, for any fixed $0 < \lambda \leq \min \bigg\{ \alpha, \frac{2\gamma}{1+\gamma}, \beta \frac{2\gamma}{1+\gamma}\bigg \}$.}
\]

\begin{theorem}
\label{thm: global C2 in plane}
Let $X$ and $Y$ be open, bounded, and $C^{1,1}$ uniformly convex sets in $\R^2$. 
Suppose $\alpha$ and $\beta$ are two positive constants. 
Let $a \in C^{\mu}(\overline{X})$ and $b \in C^{\omega}(\overline{Y})$ be two positive functions.
Suppose that $T_{min}$ is the optimal transport taking $f = ad_{\partial X}^\alpha$ to $g = bd_{\partial Y}^\beta$.
If $\gamma \geq 1$, then $T_{min} \in C^{1 + \lambda}(\overline{X})$.
On the other hand, if $\gamma < 1$, then $T_{min} \in C^{\gamma(1+ \omega)}(\overline{X})$.
\end{theorem}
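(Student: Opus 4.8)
The plan is to localize near $\partial X$, where all the difficulty is concentrated, and to run an improvement-of-flatness iteration comparing the Brenier potential $u$ (with $\nabla u = T_{min}$) to an explicit model that encodes the announced splitting into a tangential identity map and the one-dimensional normal transport $t\mapsto t^{\gamma}$.

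\emph{Reductions.} On a compact subset of $X$ the densities $f = a\,d_{\partial X}^{\alpha}$ and $g = b\,d_{\partial Y}^{\beta}$ are bounded between positive constants and inherit the H\"older regularity of $a$, $b$; Caffarelli's interior regularity theory together with Schauder estimates then gives $u\in C^{2,\tau}_{\mathrm{loc}}(X)$, so everything reduces to estimates up to the boundary. Since $f$ and $g$ are doubling — positive continuous multiples of powers of distance functions to bounded convex sets define doubling measures — Theorem~\ref{thm: global Csigma} gives $T_{min}\in C^{\sigma}(\overline X)$, and, combined with strict convexity of $u$ (convex target, absolutely continuous source, Caffarelli), $T_{min}$ extends to a homeomorphism of $\overline X$ onto $\overline Y$ carrying $\partial X$ onto $\partial Y$. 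Thus it suffices to prove, with constants uniform in $x_{0}\in\partial X$, a pointwise expansion of $u$ at $x_{0}$ of the correct order, and then to read off $\nabla u = T_{min}$ via a Campanato-type characterization of H\"older spaces.

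\emph{Boundary-layer control.} Fix $x_{0}\in\partial X$; after an isometry, $x_{0}=0$, the interior normal is $e_{2}$, and $X=\{x_{2}>q(x_{1})\}$, $Y=\{y_{2}>\bar q(y_{1})\}$ near $0$ and $y_{0}=T_{min}(0)$, with $q,\bar q\in C^{1,1}$ vanishing to first order. The first step is a two-sided non-tangential estimate: using the Brenier/Monge--Amp\`{e}re characterization together with barriers built from the explicit one-dimensional transport $\Phi$ between the normal profiles of $f$ and $g$ (with $\Phi(t)\sim t^{\gamma}$ by the mass-balance formula) and the doubling property, show that $T_{min}$ maps the height-$h$ boundary layer of $X$ at $x_{0}$ into, and onto a definite fraction of, the height-$\sim h^{\gamma}$ boundary layer of $Y$ at $y_{0}$. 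Fed into Caffarelli's localization/section machinery, this quantitative seed pins down the shape of the sections of $u$ centered at boundary points.

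\emph{Model and iteration.} Passing to Fermi (boundary-normal) coordinates on both sides turns $f$, $g$ into densities comparable to $t^{\alpha}$, $t^{\beta}$ in the normal variable $t$, the expected limit of $u$ being the model potential
\[
P_{x_{0}}(x)\;=\;\tfrac12 A\,x_{1}^{2}\;+\;\tfrac{c_{0}}{1+\gamma}\,\bigl(x_{2}-q(x_{1})\bigr)^{1+\gamma}
\]
up to an affine correction, and the goal is to show $u-P_{x_{0}}$ decays faster than the order of $P_{x_{0}}$ by a fixed power. The device that makes the degenerate equation $g(\nabla u)\det D^{2}u=f$ tractable in the plane is the partial Legendre transform: replacing a derivative of $u$ by the conjugate variable converts it into a \emph{linear} second-order equation whose leading coefficient, after the substitution, is comparable to a power of the distance to $\{t=0\}$, i.e.\ a degenerate (when $\gamma>1$) or singular (when $\gamma<1$) elliptic operator of the form $\partial_{tt}v+t^{\gamma-1}\partial_{pp}v\approx(\text{error})$, for which a Schauder-type estimate holds in the appropriate weighted H\"older scale. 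Rescaling $u$ about $0$ by the anisotropic dilation adapted to the splitting (tangentially $x_{1}\mapsto x_{1}/r$, normally $t\mapsto t/r$ paired with $\Phi(t)\mapsto\Phi(t)/\Phi(r)$, and the compensating dilation of $u$), the rescaled solutions solve the same type of equation with data converging — by the $C^{\mu}$, $C^{\omega}$, and $C^{1,1}$ hypotheses — to the constant-coefficient model; a compactness argument produces in the limit a solution of the model equation, whose regularity (from the explicit $\Phi$ and the degenerate Schauder estimate) beats the threshold. Transferring this back improves the approximation of $u$ by a model at unit scale; iterating over dyadic scales $r=2^{-k}$ yields the pointwise $C^{1+\gamma+\varepsilon}$-type expansion of $u$ at $x_{0}$, hence $T_{min}\in C^{1+\lambda}(\overline X)$ when $\gamma\ge 1$ and $T_{min}\in C^{\gamma(1+\omega)}(\overline X)$ when $\gamma<1$; the precise constraints on $\lambda$ and the formulas for $\mu$, $\omega$, $\tfrac{1+\gamma}{2}$, $\tfrac{1+\gamma}{2\gamma}$ are exactly what the degenerate operator can propagate starting from $C^{\mu}$/$C^{\omega}$ data on merely $C^{1,1}$ domains.

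\emph{Main obstacle.} The crux is the boundary step: one must run the compactness/iteration in the \emph{anisotropic} geometry forced by the $t\mapsto t^{\gamma}$ scaling — ordinary balls and ordinary H\"older seminorms are the wrong objects — and one needs a clean, quantitative Schauder estimate for the linearized \emph{degenerate} operator $\partial_{tt}+t^{\gamma-1}\partial_{pp}$ up to $\{t=0\}$, valid only in the restricted exponent range of the statement (in particular no gain beyond $\tfrac{2}{1+\gamma}$, respectively its $\gamma$-twisted analogue, which is why the hypotheses cap $\lambda$ there). Making the error terms generated by the genuine densities and by the $C^{1,1}$ — rather than smoother — domains summable along the dyadic iteration is the technical heart of the argument.
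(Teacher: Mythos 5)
Your outline has the right large-scale architecture (global H\"older from Theorem~\ref{thm: global Csigma}, anisotropic blow-up at boundary points, linearization to a degenerate Grushin-type operator, improvement of flatness), but it contains a genuine gap at the very first reduction. You write that ``after an isometry'' one may take $x_0=0$ with interior normal $e_2$ for $X$ \emph{and} $Y=\{y_2>\bar q(y_1)\}$ near $y_0=T_{min}(x_0)$. For quadratic cost you cannot rotate the source and the target independently while keeping $T_{min}$ the gradient of a single convex potential: the admissible normalizations act as $A$ on $X$ and $A^{-t}$ on $Y$. Aligning the two normals by such a transformation, with bounds uniform in $x_0$, is possible exactly when $\nu_{\partial X}(x_0)\cdot\nu_{\partial Y}(T_{min}(x_0))$ is bounded away from zero; ruling out orthogonality is a substantive step (in the paper it is the strict obliqueness estimate, Lemma~\ref{lem: strict obliqueness}, proved by a maximum-principle argument in the spirit of Savin--Yu combined with the volume product estimate of Corollary~\ref{cor: volume product}). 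Your ``two-sided boundary-layer'' claim---that $T_{min}$ carries the height-$h$ layer at $x_0$ onto a definite fraction of the height-$\sim h^{\gamma}$ layer at $y_0$---already presupposes this alignment; if the normal image direction were allowed to degenerate toward the tangential direction of $\partial Y$, the anisotropic scaling on which your whole iteration rests would simply not be available. This is a missing idea, not a harmless normalization.

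Beyond that, the step in which barriers built from the one-dimensional transport plus doubling ``pin down the shape of the sections'' conceals the other substantial part of the argument: one must prove that under the natural normalization the domains flatten and the blow-up limit is the split model, which in the paper is the content of Lemmas~\ref{lem: bdry does not flatten} and \ref{lem: dual bdry does not flatten} together with the Liouville theorem (Theorem~\ref{thm: liouville}), and these in turn rest on the Pogorelov-type tangential estimate and the comparability $u_n\sim x_n^{\gamma}$ (Proposition~\ref{prop: pogo tang}, Lemma~\ref{lem: pure normal comp to 1}, Proposition~\ref{prop: osc decay at bdry}); none of this follows from mass balance alone. Your partial Legendre linearization to $\partial_{tt}+t^{\gamma-1}\partial_{pp}$ is a legitimate two-dimensional alternative to the paper's Grushin operator with drift, but its coefficient is comparable to $t^{\gamma-1}$ only after one knows $u_2\sim x_2^{\gamma}$ with quantitative (H\"older) control, which is again the nontrivial content; and the boundary Schauder theory for the degenerate operator, which you yourself identify as the ``technical heart,'' is invoked rather than established (the paper imports it from the De Silva--Savin theory for the Neumann problem for $L$). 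As written, the proposal assumes its two pivotal inputs---strict obliqueness and the classification/regularity theory for the degenerate model---so it does not yet constitute a proof.
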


At the heart of Theorem~\ref{thm: global C2 in plane} is an expansion of a convex potential defining $T_{min}$ at points on $\partial X$.
In particular, up to a translation placing $0 \in \partial X$ and subtracting off the tangent plane to $u$ at $0$, we prove that
\[
|u(Mx) - p_1x_1^2 - p_2x_2^{1+\gamma}| \leq C(|x_1|^2 + x_2^{1+\gamma})^{1 + \frac{\lambda}{2}} \text{ in } \{ x_2 \geq 0 \} \cap M^{-1}\overline{X},
\]
where $M$ is some linear transformation and $p_1$, $p_2$, and $C$ are three positive constants.

\begin{remark}
The case $\alpha = 0 = \beta$ is, by now, well-understood.
When $\alpha = 0$ (respectively $\beta = 0$), the upper bound on $\lambda$ becomes independent of any dependence on $\alpha$ (respectively $\beta$).
\end{remark}

Our final main result is a Liouville theorem in the flat setting, when $X = \{ x_n > 0 \}$ and $Y = \{ y_n > 0 \}$, with $a \equiv 1 \equiv b$.
Here $u_n \geq 0 \text{ in } \{ x_n > 0 \}$, and our Monge--Amp\`{e}re equation is
\begin{equation}
\label{eqn: MA Liouville}
\det D^2 u = \frac{x_n^\alpha}{u_n^\beta} \text{ in } \{ x_n > 0 \} \text{ and } u_n = 0 \text{ on } \{ x_n = 0 \}.
\end{equation}
We remark that this equation is invariant under affine transformations that keep the tangential variables $x' = (x_1,\dots,x_{n-1})$ separate from the normal variable $x_n$: $Ax = (A'x', a_n x_n)$.
Furthermore, since all three notions of weak solution to the Monge--Amp\`{e}re equation (Alexandrov, Brenier, and viscosity) are equivalent in this case, the following theorem classifies not only Brenier solutions to \eqref{eqn: MA Liouville}, but also Alexandrov and viscosity solutions to \eqref{eqn: MA Liouville}.

\begin{theorem}
\label{thm: liouville}
Let $u$ be convex and such that $(\nabla u)_{\#} d_{\partial \{x_n > 0\}}^\alpha =  d_{\partial \{y_n > 0\}}^\beta$, for two given constants $\alpha \geq 0$ and $\beta \geq 0$.
Then
\[
u(x) = p_0 + p' \cdot x' + P' x' \cdot x' + p_n x_n^{1+\gamma}
\]
for some $p_0 \in \R$, $p' \in \R^{n-1}$, positive definite matrix $P'$, and constant $p_n > 0$.
\end{theorem}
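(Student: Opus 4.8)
The plan is to exploit the affine invariance of \eqref{eqn: MA Liouville} together with a blow-up/compactness argument that forces the solution to be ``self-similar'' under the natural scaling. First I would observe that the Monge--Amp\`ere equation $\det D^2 u = x_n^\alpha / u_n^\beta$ with $u_n = 0$ on $\{x_n = 0\}$ is invariant under the one-parameter family of rescalings $u_r(x) := r^{-2} u(r x', r^{\gamma/(\cdots)} x_n)$ adapted to the homogeneity dictated by $\gamma = (1+\alpha)/(1+\beta)$; the correct scaling is the one under which $x_n^{1+\gamma}$ is an eigenfunction, matching the one-dimensional profile $u'(t) \sim t^{\gamma}$ coming from the mass balance formula. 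The key structural claim to extract is that, after normalization, the sections $\{u < h\}$ of $u$ are comparable to the ``boxes'' $\{|x'|^2 + x_n^{1+\gamma} < h\}$ at every scale, which is exactly the content of the expansion displayed just before the Remark; a global solution should have this comparability uniformly in $h \in (0,\infty)$, not just as $h \to 0$.

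The main steps, in order, would be: (1) establish a priori that $u$ has the expected growth and nondegeneracy, i.e.\ $c(|x'|^2 + x_n^{1+\gamma}) \leq u(x) - \ell(x) \leq C(|x'|^2 + x_n^{1+\gamma})$ for the supporting affine function $\ell$ at a chosen point of $\{x_n = 0\}$, using the engine behind Theorem~\ref{thm: global C2 in plane} / the expansion (this is essentially an interior-type estimate made global because the domain is a half-space and the data are scale-invariant); (2) use this two-sided bound to control the sections and deduce, via the affine invariance, that the rescalings $u_r$ converge (along subsequences, locally uniformly) as $r \to \infty$ and as $r \to 0$ to solutions of the same equation with the same bounds; (3) show that any such solution must be a polynomial of the asserted form by a monotonicity/rigidity argument: either iterate a ``improvement of flatness'' so that the quadratic-plus-$x_n^{1+\gamma}$ Taylor part is forced to be exact, or differentiate the equation in the tangential directions $x_j$, $j<n$, observe that $\partial_j u$ is (after subtracting a linear function) a solution of a linearized/homogeneous problem that is globally controlled, and conclude $\partial_j u$ is affine, hence $u$ is quadratic in $x'$; then the remaining ODE in $x_n$ forces $u_n = p_n (1+\gamma) x_n^{\gamma}$ up to the tangential Hessian factor. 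Here the affine invariance keeping $x'$ separate from $x_n$ is what guarantees $P'$ can be normalized and the cross terms $x' x_n$ vanish.

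I expect the main obstacle to be step (3), the rigidity: ruling out solutions that oscillate between different ``box'' profiles at different scales, or that carry a nontrivial lower-order correction. The degeneracy of the equation at $\{x_n = 0\}$ means the classical Pogorelov/Caffarelli second-derivative estimates do not apply directly near the boundary, so the linearized-equation argument for $\partial_j u$ must be carried out in the degenerate geometry, presumably using the comparability of sections to boxes from step (1) as a substitute for uniform ellipticity (a Harnack inequality on the appropriate ``intrinsic'' metric). The cleanest route is likely to combine the scaling invariance with a John-ellipsoid normalization of the sections $\{u < h\}$: if the normalizing affine maps do not stabilize as $h \to \infty$, one derives a contradiction with the global two-sided bound; once they stabilize, $u$ agrees with its second-order expansion to all orders, giving the polynomial. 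The dependence of the exponents $\lambda, \mu, \omega$ on $\alpha, \beta$ in the hypotheses of Theorem~\ref{thm: global C2 in plane} suggests that in the flat model one should expect to recover the endpoint/exact profile, which is consistent with the conclusion being an exact polynomial rather than merely a $C^{1+\lambda}$ estimate.
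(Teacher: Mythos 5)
Your outline has the right general shape (exploit the anisotropic scaling $D_t=\diag(t^{1/2}\Id',t^{1/(1+\gamma)})$ and prove a rigidity statement), but the decisive mechanisms are missing, and the two routes you propose for the rigidity step would not close. First, your step (1) as stated risks circularity: in the paper the expansion behind Theorem~\ref{thm: global C2 in plane} is \emph{derived from} Theorem~\ref{thm: liouville} (the Liouville theorem identifies the blow-up limit in Section~\ref{sec: 2D}), so you cannot invoke ``the engine behind Theorem~\ref{thm: global C2 in plane}'' to get the two-sided growth bound. What actually produces that bound is a boundary Pogorelov estimate for the \emph{tangential} second derivatives (Proposition~\ref{prop: pogo tang}, proved by approximating with the even reflection of a nondegenerate problem and running a Pogorelov-type maximum principle), applied to both $u$ and its Legendre transform, combined with the mass-balance estimate on sections ($d_h^{2+\alpha+\beta}w_{1;h}^2\cdots w_{n-1;h}^2\sim h^{n+\beta}$ in Lemma~\ref{lem: pure normal comp to 1}); together these give $c\le u_n/x_n^\gamma\le C$ and the uniform comparability of sections to the cylinders $\mathcal{C}_h$ at every scale, with normalizing maps of the form $\diag(A',a)$ that stay bounded. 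None of this is supplied by your sketch, and without the tangential Pogorelov bound the sections can a priori be arbitrarily eccentric in $x'$.

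Second, and more seriously, your step (3) is a genuine gap. An improvement-of-flatness iteration cannot be started for an arbitrary global solution: it requires $\epsilon$-closeness to the model $U$ at some scale, which is essentially the statement you are trying to prove, and obtaining it by blow-down again presupposes the classification of blow-down limits. Your alternative---differentiate tangentially and show each $\partial_j u$ is affine---would need a Liouville theorem for the degenerate linearized operator with the singular drift $-\beta u_{nj}/u_n$, including control of the mixed derivatives $u_{jn}$, which nothing in your argument provides (the tangential Pogorelov estimate says nothing about $u_{jn}$). The paper's rigidity mechanism is different and is the heart of the proof: a barrier-based oscillation-decay (boundary Harnack type) estimate for the quotient $u_n/x_n^\gamma$ (Proposition~\ref{prop: osc decay at bdry}), whose iteration gives interior-plus-boundary H\"older continuity of $u_n/x_n^\gamma$ with a scale-invariant seminorm; applying it in $\mathcal{C}_t$ for all $t$, using the uniform section comparability to normalize, the seminorm decays like $\min\{t^{\chi/2},t^{\chi/(1+\gamma)}\}^{-1}\to 0$ as $t\to\infty$, forcing $u_n/x_n^\gamma$ to be constant. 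Only then does one conclude: $\det D^2_{x'}u$ is constant, and the classical J\"orgens--Calabi--Pogorelov theorem yields $u=P(x')+p_n x_n^{1+\gamma}$---a final step your proposal also leaves unaddressed.
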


This paper is organized as follows.
The next section collects some facts from measure theory and convex analysis.
In Section~\ref{sec: Caff Bdry Reg Revisit}, we revisit Caffarelli's boundary regularity theory, and prove Theorem~\ref{thm: global Csigma}.
In Section~\ref{sec: The Flat Case}, we study the flat setting, and we prove our (Liouville) Theorem~\ref{thm: liouville}.
In Section~\ref{sec: Flat Implies Smooth}, we establish a pointwise ``flat implies smooth'' result.
Finally, in Section~\ref{sec: 2D}, we prove Theorem~\ref{thm: global C2 in plane}.

\section{Preliminaries}
\label{sec: prelim}

Throughout this work, $c$ and $C$ will denote positive constants that may change from line to line.
It will be clear from the context, if any change occurs.
Sometimes some of the quantities on which $c$ and $C$ depend will be explicit and denoted in parentheses or as subscripts; other times, especially when these quantities are contextually clear, these quantities will be implicit.

Let us start with a pair of definitions and an important lemma by John.

\begin{definition}
We say that a map $T$ {\it pushes-forward} a measure $f$ to another measure $g$, $T_{\#}f = g$, if
\[
\int \varphi \circ T \, \d f = \int \varphi \, \d g \text{ for all $\varphi$ Borel and bounded.}
\]
\end{definition}

\begin{definition}
A nonnegative measure $f$ is {\it doubling} (on bounded convex domains) if there is a constant $C \geq 1$ such that the following holds: given an open, bounded convex set $S$ whose barycenter is contained in $\spt f$, 
\[
f(S) \leq C f(\tfrac{1}{2}S),
\]
where $\frac{1}{2}S$ is the dilation of $S$ with respect to its center of mass by $1/2$.
\end{definition}

\begin{definition}
An {\it ellipsoid} is the image under a symmetric positive definite affine transformation of $B_1(0)$.
In particular, let $E$ be any symmetric positive definite matrix and $x \in \R^n$, the ellipsoid generated by $E$ and centered at $x \in \R^n$ is 
\[
\mathcal{E}_{E,x} := x + E(B_1(0)).
\]
Given $r > 0$, we let 
\[
r\mathcal{E} = r\mathcal{E}_{E,x} := \mathcal{E}_{rE,x}
\]
be the dilation of $\mathcal{E}$ with respect to its center by $r$.
(Given an ellipsoid $\mathcal{E}$, we can assume its generating matrix $E$ can be diagonalized with a determinant $1$ orthogonal matrix.)
\end{definition}

\begin{lemma}[John's Lemma]
Let $S \subset \R^n$ be a bounded convex set with nonempty interior and center of mass $z$.
A unique ellipsoid $\mathcal{E}$ also with center of mass $z$ exists such that 
\[
\mathcal{E} \subset S \subset n^\frac{3}{2}\mathcal{E}.
\]
\end{lemma}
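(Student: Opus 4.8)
The plan is to produce the ellipsoid by a compactness/optimization argument: among all ellipsoids contained in $S$, choose one of maximal volume. First I would observe that the family of ellipsoids $\mathcal{E}_{E,x}$ contained in the compact convex set $\overline{S}$ is a nonempty (since $S$ has nonempty interior) and compact subset of the parameter space of pairs $(E,x)$ with $E$ symmetric positive semidefinite and $x \in \overline{S}$, once we note that the constraint $\mathcal{E}_{E,x}\subseteq \overline S$ forces $\|E\|$ and $|x|$ to stay bounded and is closed under limits (the limiting object is an ellipsoid, possibly degenerate, still contained in $\overline S$; nondegeneracy of the maximizer follows because $S$ has interior, so some genuine ball fits inside). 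Since $E\mapsto \det E$ is continuous, a maximum-volume inscribed ellipsoid $\mathcal{E}$ exists. Uniqueness comes from a convexity argument: if $\mathcal{E}_0$ and $\mathcal{E}_1$ were two distinct maximizers, after an affine change of coordinates normalizing $\mathcal{E}_0$ to the unit ball one checks that a suitable ``average'' ellipsoid (built from the arithmetic mean of the defining quadratic forms, or from the geometric-mean interpolation of the two positive definite matrices) is still inscribed in the convex set $S$ but has strictly larger volume unless $\mathcal{E}_1=\mathcal{E}_0$; this uses strict concavity of $\det^{1/n}$ on positive definite matrices together with convexity of $S$ to keep the interpolant inside.

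Next I would prove the inclusion $S \subseteq z + n^{3/2}(\mathcal E - z)$, where $z$ is the common center. After an affine normalization we may assume $\mathcal{E} = B_1(0)$ and $z=0$; it then suffices to show $S\subseteq \overline{B_{n^{3/2}}(0)}$. Suppose not, so there is a point $q\in S$ with $|q|>n^{3/2}$, and take coordinates so that $q = (t,0,\dots,0)$ with $t>n^{3/2}$. By convexity of $S$, the convex hull $K$ of $B_1(0)\cup\{q\}$ is contained in $S$. The idea is to inscribe in $K$ an ellipsoid strictly larger in volume than $B_1$, contradicting maximality. Concretely, consider ellipsoids of revolution centered on the $x_1$-axis of the form $\mathcal F_{a,c} = \{\, (x_1,x'):\; ((x_1-c)/a)^2 + |x'|^2/b^2 \le 1 \,\}$ that are tangent to the ``cone part'' of $\partial K$ and fit inside $K$; optimizing $a b^{n-1}$ over the admissible one-parameter family and comparing with $1$ gives the bound. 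The clean way to organize the computation is: since $K$ contains the ``spherical cap cone'' swept by $B_1$ as seen from $q$, reduce to a two dimensional picture in the $(x_1,|x'|)$ half-plane, where $K$ becomes the convex hull of a half-disk of radius $1$ and a point at distance $t$ on the axis; an explicit inscribed ellipse of area exceeding $\pi$ exists precisely when $t>n^{3/2}$ (the factor $n^{3/2}$ entering because shrinking in the $x_1$-direction while the optimal ellipsoid must expand in the remaining $n-1$ transverse directions costs a power $(n-1)/2$, and balancing against the linear room gained on the axis yields the threshold $n^{1/2}\cdot n = n^{3/2}$).

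The main obstacle I expect is the precise extremal computation giving the sharp constant $n^{3/2}$: one must exhibit, in the convex hull of a ball and a far point, an inscribed ellipsoid beating the unit ball, and pin down the exact cutoff. This is a one-variable calculus optimization once the problem is reduced to the axially symmetric two dimensional model, but setting it up so that the comparison ellipsoid is genuinely contained in $K$ (checking tangency to both the flat face coming from $B_1$ and the conical boundary joining $B_1$ to $q$) requires care. A secondary technical point is the uniqueness argument: making the interpolation of two inscribed ellipsoids rigorous needs the strict concavity of $A\mapsto (\det A)^{1/n}$ on symmetric positive definite matrices and an argument that the interpolant's associated ellipsoid is contained in $S$ — this follows because each interpolant ellipsoid is contained in the convex hull of $\mathcal E_0\cup\mathcal E_1\subseteq S$, but verifying the containment of the specific (geometric-mean) interpolant takes a short linear-algebra lemma.
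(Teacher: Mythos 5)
There is a genuine gap, and it is exactly at the point the statement is designed to address: the ellipsoid is required to have the \emph{same center of mass $z$ as $S$}. Your construction takes the maximal-volume inscribed ellipsoid of $S$ with unconstrained center, and then simply asserts that $z$ is ``the common center.'' That is false in general: the center of the maximal inscribed (John) ellipsoid of a convex body need not coincide with the body's centroid (think of an asymmetric body such as the convex hull of a ball and a far exterior point). Nothing in your argument repairs this, and the rest of the proof depends on it, since the dilation $n^{3/2}\mathcal{E}$ in the statement is taken about $z$. Relatedly, your extremal ``ice-cream-cone'' computation is the one from the classical John theorem, and when done correctly it yields the threshold $t>n$ and the conclusion $S\subset c+n(\mathcal E-c)$ about the ellipsoid's \emph{own} center $c$ — not $n^{3/2}$ about the centroid. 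The heuristic ``$n^{1/2}\cdot n=n^{3/2}$ by balancing'' does not come out of that computation; the exponent $3/2$ in the lemma is precisely the price of forcing the center to be the center of mass, and it cannot be produced without using the centroid somewhere.

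For comparison: the paper does not prove the lemma but cites Guti\'errez's book, where the argument runs differently. One optimizes over ellipsoids \emph{centered at $z$} (e.g.\ the minimum-volume such ellipsoid containing $S$), normalizes it to the unit ball, and argues that if $S$ failed to contain $B_{n^{-3/2}}(0)$ then $S$ would lie in a thin slab $\{|x\cdot \e|\le \delta\}\cap B_1$: the width on one side is small by assumption, and on the other side it is controlled by the Minkowski--Radon centroid bound (a chord through the center of mass is divided in ratio at most $n$), giving $\delta\sim n h$; a short calculus computation then produces a centered ellipsoid of smaller volume containing such a slab whenever $\delta\lesssim n^{-1/2}$, contradicting minimality and yielding the constant $n\cdot n^{1/2}=n^{3/2}$. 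If you want to keep your inscribed-ellipsoid framework, you must restrict the maximization to ellipsoids centered at $z$ and then bring the centroid property into the contradiction argument (a competitor in the cone $\conv(B_1\cup\{q\})$ is no longer admissible because it is not centered at $z$); as written, your competitor construction is inadmissible for the constrained problem and your unconstrained problem proves a different statement. Your existence-by-compactness and strict-concavity-of-$\det^{1/n}$ uniqueness ideas are fine in themselves, but they too would need to be redone for the centered family to match the lemma as stated.
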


With these definitions and John's lemma in hand (see, e.g., \cite{Gu} for a proof), we prove that a measure that is doubling on ellipsoids is doubling.

\begin{corollary}
\label{cor: doubling on ellipsoids implies doubling}
Let $f$ be a nonnegative measure.
If $f$ is doubling on ellipsoids, then $f$ is doubling.
\end{corollary}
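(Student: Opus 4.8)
The plan is to transfer the doubling inequality from an arbitrary bounded convex set $S$ to the two John ellipsoids sandwiching $S$ (one for $S$ itself, one for $\tfrac12 S$), and then absorb the finitely many dilations needed to pass between the comparable ellipsoids into the doubling constant on ellipsoids. First I would fix a bounded convex $S$ whose barycenter $z$ lies in $\spt f$. By John's Lemma there is an ellipsoid $\mathcal E$ centered at $z$ with $\mathcal E \subset S \subset n^{3/2}\mathcal E$; applying John's Lemma to $\tfrac12 S$ (which has the same center of mass $z$) gives an ellipsoid $\mathcal F$, also centered at $z$, with $\mathcal F \subset \tfrac12 S \subset n^{3/2}\mathcal F$. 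Note $z$ is the barycenter of all of $S$, $\tfrac12 S$, $\mathcal E$, and $\mathcal F$, so the hypothesis ``barycenter in $\spt f$'' applies to each of these sets.

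Next I would chain the inclusions to estimate $f(S)$. We have
\[
f(S) \leq f(n^{3/2}\mathcal E) = f\big( 2^k \cdot (2^{-k} n^{3/2}) \mathcal E \big),
\]
and choosing $k$ to be the least integer with $2^k \geq n^{3/2}$, the inner ellipsoid $2^{-k} n^{3/2}\mathcal E$ is a dilate of $\mathcal E$ by a factor in $[\tfrac12, 1]$, hence contained in $\mathcal E \subset S$, and concentric with it. Iterating the doubling-on-ellipsoids inequality $k$ times (each halving step is legitimate since all these concentric dilates share the center of mass $z \in \spt f$) yields $f(S) \leq C_0^{k} f(\mathcal E')$ for some concentric ellipsoid $\mathcal E' \subset S$, where $C_0$ is the doubling-on-ellipsoids constant and $k = k(n)$. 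Since $\mathcal E' $ is a dilate of $\mathcal F$ (both concentric ellipsoids with the same orientation after we diagonalize, by the parenthetical remark in the definition of ellipsoid — more carefully, one compares $\mathcal E'$ and $\tfrac12 S \supset \mathcal F$ via $\mathcal E' \subset S \subset n^{3/2}\mathcal E$ and then down to $\mathcal F$), a further bounded number $k'=k'(n)$ of halving steps gives $f(\mathcal E') \leq C_0^{k'} f(\mathcal F)$. Finally $f(\mathcal F) \leq f(\tfrac12 S)$. Combining, $f(S) \leq C_0^{k+k'} f(\tfrac12 S)$, so $f$ is doubling with constant $C = C_0^{k+k'}$ depending only on $n$ and $C_0$.

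The one point requiring care — and the main (mild) obstacle — is the comparison between the ellipsoid $\mathcal E'$ coming from $S$'s John ellipsoid and the ellipsoid $\mathcal F$ coming from $(\tfrac12 S)$'s John ellipsoid: a priori these two ellipsoids, though concentric, may have different shapes and orientations, so one cannot directly dilate one into the other. The fix is that both are comparable to the convex set $S$ up to the universal factor $n^{3/2}$: from $\mathcal F \subset \tfrac12 S$ and $S \subset n^{3/2}\mathcal E$ (together with $\tfrac12 S \subset \tfrac12 \cdot n^{3/2}\mathcal E$) one gets $c(n)\mathcal E \subset \mathcal F \subset C(n)\mathcal E$ for dimensional constants, which is what licenses replacing $\mathcal F$ by a bounded number of dilations of $\mathcal E$ (equivalently, of the concentric $\mathcal E'$). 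Throughout, the only structural facts used are that dilation with respect to the center of mass preserves the barycenter and that John's ellipsoid is centered at the barycenter, so the doubling-on-ellipsoids hypothesis is applicable at every step.
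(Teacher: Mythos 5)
Your argument is correct and is essentially the paper's proof: apply John's Lemma to $S$ and iterate the doubling-on-ellipsoids inequality along the concentric dilates $2^{-j}n^{3/2}\mathcal{E}$ (all centered at the barycenter $z \in \spt f$) until the resulting ellipsoid sits inside $\tfrac12 S$. The only difference is your detour through the John ellipsoid $\mathcal{F}$ of $\tfrac12 S$, which is unnecessary (and where your parenthetical slightly misattributes which inclusions give $c(n)\mathcal{E}\subset\mathcal{F}$): since $\mathcal{E}$ and $S$ share their center of mass, one has $\tfrac{1}{2^{k}}n^{3/2}\mathcal{E}\subset\tfrac12\mathcal{E}\subset\tfrac12 S$ directly, which is how the paper closes the chain.
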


\begin{proof}
Let $S$ be an open, bounded convex set.
Then, by John's lemma, $S$ is comparable to an ellipsoid $\mathcal{E} $: $\mathcal{E}  \subset S \subset n^{3/2}\mathcal{E}$, and the center of mass of $\mathcal{E} $ is the same as the center of mass of $S$.
If $C \geq 1$ is the doubling constant for $f$ on ellipsoids and $k \geq 1$ is chosen such that $n^{3/2}/2^k \leq 1/2$, then
\[
f(S) \leq f( n^{\frac{3}{2}}\mathcal{E} ) \leq C f(\tfrac{1}{2}n^{\frac{3}{2}} \mathcal{E} ) \leq \dots \leq C^k f(\tfrac{1}{2^k}n^{\frac{3}{2}} \mathcal{E} ) \leq C^k f(\tfrac{1}{2}\mathcal{E} ) \leq C^k f( \tfrac{1}{2}S ).
\]
\end{proof}

As a consequence of Lemma~\ref{cor: doubling on ellipsoids implies doubling}, we can show that measures that are comparable to the distance function to the boundary of a convex domain are doubling.

\begin{lemma}
\label{lem: doubling on ellipsoids}
Let $X \subset \R^n$ be an open, bounded convex set.
The density $f = a d^\alpha_{\partial X}$ defines a doubling measure on ellipsoids if $0 < \inf_X a, \sup_X a < \infty$.
In particular, a constant $C \geq 1$ exists for which
\[
\int_{\mathcal{E}}  f  \leq C \int_{\frac{1}{2}\mathcal{E} } f
\]
given any ellipsoid $\mathcal{E} $ centered in $\overline{X}$.
\end{lemma}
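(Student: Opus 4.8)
The plan is to first discard the weight $a$, then reduce the doubling inequality to a pointwise comparison, and finally close that comparison using the concavity of the distance function.

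First I would note that since $0<m:=\inf_X a\le \sup_X a=:M<\infty$, for every Borel set $S$ one has $m\int_S d_{\partial X}^\alpha \le \int_S f \le M\int_S d_{\partial X}^\alpha$, so it suffices to produce a constant $C_0=C_0(n,\alpha)$ with $\int_{\mathcal E} d_{\partial X}^\alpha \le C_0 \int_{\frac12\mathcal E} d_{\partial X}^\alpha$ for every ellipsoid $\mathcal E$ centered at a point $x_0\in\overline X$; the doubling constant of $f$ on ellipsoids is then $(M/m)C_0$. (Here $\spt f=\overline X$, so "centered in $\spt f$" means exactly $x_0\in\overline X$.)

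Next, let $\Phi(x):=x_0+\tfrac12(x-x_0)$ be the dilation by $1/2$ about the center $x_0$. It is an affine bijection of $\mathcal E$ onto $\tfrac12\mathcal E$ with constant Jacobian $2^{-n}$, so changing variables gives
\[
\int_{\frac12\mathcal E} d_{\partial X}^\alpha = 2^{-n}\int_{\mathcal E} d_{\partial X}(\Phi(x))^\alpha\,\d x .
\]
Hence it is enough to establish the pointwise bound $d_{\partial X}(x)\le 2\,d_{\partial X}(\Phi(x))$ for all $x\in\mathcal E$, which yields $C_0=2^{n+\alpha}$. To prove it I would use that $d_{\partial X}$ is concave on $X$: if $B_{r_1}(x_1),B_{r_2}(x_2)\subset X$, then $B_{(r_1+r_2)/2}(\tfrac{x_1+x_2}{2})=\tfrac12 B_{r_1}(x_1)+\tfrac12 B_{r_2}(x_2)\subset X$ by convexity, so $d_{\partial X}$ satisfies the midpoint concavity inequality, and being $1$-Lipschitz it is concave on $\overline X$ as well. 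If $x\notin X$ the bound is trivial since its left-hand side vanishes; if $x\in X$, then $\Phi(x)$ is the midpoint of $x\in X$ and $x_0\in\overline X$, hence lies in $X$, and concavity gives $d_{\partial X}(\Phi(x))\ge \tfrac12 d_{\partial X}(x)+\tfrac12 d_{\partial X}(x_0)\ge \tfrac12 d_{\partial X}(x)$, using $d_{\partial X}(x_0)\ge 0$. The case $\alpha=0$ is identical after reading $d_{\partial X}^0$ as $\mathbf{1}_X$, i.e.\ using only that $\Phi$ maps $\overline X\cap\mathcal E$ into $\overline X\cap\tfrac12\mathcal E$.

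As for the main difficulty: there is essentially none beyond identifying the two ingredients—the dilation toward the center of $\mathcal E$ and the concavity of $d_{\partial X}$—and I do not expect to need John's lemma here. The only point that calls for a little care is the case $x_0\in\partial X$, where $d_{\partial X}(x_0)=0$; this is covered either by the remark that a $1$-Lipschitz function concave on the open convex set $X$ is concave on $\overline X$, or by a direct limiting argument along the segment $[x,x_0)$.
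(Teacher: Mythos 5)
Your proof is correct, and it takes a genuinely different and more elementary route than the paper's. The paper argues by cases according to whether $\mathcal{E}\subset X$ or $\mathcal{E}\setminus X\neq\emptyset$: it bounds $\int_{\mathcal E}f$ from above by $C\,d_{\partial X}(z)^\alpha\,|\mathcal E\cap X|$ (with $z$ the center of $\mathcal E$, or the center of mass of $(\tfrac12\mathcal E)\cap X$) and bounds $\int_{\frac12\mathcal E}f$ from below by $c\,d_{\partial X}(z)^\alpha$ times the volume of an inscribed simplex built from the principal axes (or of the John ellipsoid of $(\tfrac12\mathcal E)\cap X$), using in Case 2 the inclusion $\mathcal E\cap X\subset 2[(\tfrac12\mathcal E)\cap X]$. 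You instead reduce everything to the single pointwise bound $d_{\partial X}(\Phi(x))\ge\tfrac12 d_{\partial X}(x)$ for the half-dilation $\Phi$ about the center $x_0\in\overline X$, which is exactly concavity of $d_{\partial X}$ on $\overline X$ (equivalently, $B_{d_{\partial X}(x)/2}\bigl(\tfrac{x+x_0}{2}\bigr)=\tfrac12 B_{d_{\partial X}(x)}(x)+\tfrac12\{x_0\}\subset X$), combined with a change of variables; the hypothesis that $\mathcal E$ is centered in $\overline X$ enters exactly where it must, and the degenerate situations ($x\notin X$, $x_0\in\partial X$, $\alpha=0$) are treated correctly. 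Your argument buys brevity, no case analysis, and the explicit constant $2^{n+\alpha}\sup_X a/\inf_X a$ with the same dependencies as in the paper. What the paper's longer route buys is the two-sided comparison $\int_{\mathcal E}d_{\partial X}^\alpha\sim d_{\partial X}(z)^\alpha|\mathcal E|$ for ellipsoids well inside $X$, which is reused later (e.g., in the proof of Lemma~\ref{lem: pure normal comp to 1} to derive \eqref{eqn: mass balance doubling}), so the extra structure there is exploited elsewhere even though it is not needed for the doubling statement itself.
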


\begin{proof}
There are two cases to consider.

\medskip
{\bf Case 1: $\mathcal{E}  \subset X$.}
Let $d := d_{\partial X}(z)$, with $z$ taken to be the center of $\mathcal{E} $.
Up to a translation and rotation, $\mathcal{E}, X \subset \{x_n > 0 \}$, and the origin is the closet point on $\partial X$ to $z$.

First, note $d_{\partial X}(x) \leq 2d$ for all $x \in \partial \mathcal{E}$, and so, for all $x \in \mathcal{E}$.
Indeed, if $x_n \leq z_n$, then there is nothing to show.
If $x_n > z_n$, then
\[
d_{\partial X}(x) \leq d_{\partial {\{x_n > 0 \}}}(x) = x_n = (x_n - z_n) + z_n = (z_n - x^\ast_n) + z_n \leq 2z_n,
\]
since $x^\ast_n, x_n > 0$.
Here $x^\ast \in \partial \mathcal{E}$ is the dual point to $x$.
So
\[
\int_{\mathcal{E}} f \leq 2^\alpha (\sup a) d^\alpha|\mathcal{E}|.
\]

Second, for $i = 1, \dots, n$, let $r_i$ and $\e_i$ be the principle radii and directions of $\mathcal{E}$.
Hence, $\triangle := \conv \{ z \pm r_i\e_i : i = 1, \dots, n \} \subset \mathcal{E} $.
Also, $|\triangle|/|\mathcal{E} | \geq c(n)$.
Now consider $\triangle_d := \conv \{ z \pm \max\{r_i,d\}\e_i : i = 1, \dots, n \}$, which contains $\triangle$ and is contained in $X$.
For all $x \in \frac{1}{2}\triangle_d$ then, $\dist_{\partial \triangle_d}(x) \geq d/2^{\frac{3}{2}}$.
In turn,
\[
d_{\partial X}(x) \geq d_{\partial \triangle_d}(x) \geq \frac{d}{2^{\frac{3}{2}}} \text{ for all } x \in \tfrac{1}{2}\triangle.
\]
It follows that
\[
d^\alpha|\mathcal{E} | \leq \frac{2^{n+\frac{3\alpha}{2}}}{c(n) \inf a} (\inf a) \frac{d^\alpha}{2^{\frac{3\alpha}{2}}} |\tfrac{1}{2} \triangle|
\leq \frac{2^{n+\frac{3\alpha}{2}}}{c(n) \inf a} \int_{\frac{1}{2} \triangle} f 
\leq C(n,\alpha,\inf a)  \int_{\frac{1}{2} \mathcal{E} } f.
\] 

Finally, the above two inequalities together yield
\[
\int_\mathcal{E} f\leq C(n,\alpha,\inf a,\sup a)  \int_{\frac{1}{2} \mathcal{E}} f.
\]

\medskip
{\bf Case 2: $\mathcal{E}  \setminus X$ is nonempty.}
Up to a translation, we can assume that the center of $\mathcal{E} $ is the origin.
Let $z$ be the center of mass of $(\frac{1}{2}\mathcal{E} ) \cap X$, $d := d_{\partial X}(z)$, and suppose that the nearest point to $z$ on $\partial X$ lives on the plane $\{ x \cdot \e = 0 \}$ for some $|\e| = 1$.

Using that open, bounded convex sets are balanced with respect to their center of mass, $d_{\partial \{ x \cdot \e > 0 \}}$ is $1$-homogeneous, and arguing like we did to produce the first inequality above, we see that
\[
d_{\partial X}(x) \leq C(n) d \text{ for all } x \in \mathcal{E}  \cap X.
\]

Now let $\mathcal{J}$ be the John ellipsoid of $(\frac{1}{2}\mathcal{E} ) \cap X$, which also has center $z$.
Notice that $\mathcal{J} \subset X$.
So
\[
\int_{\mathcal{E}} f \leq C d^\alpha|\mathcal{E}  \cap X| 
\leq C d^\alpha 2^n |(\tfrac{1}{2} \mathcal{E} ) \cap X |
\leq C d^\alpha |\mathcal{J}| 
\leq C \int_{\frac{1}{2}\mathcal{J}} f 
\leq  C \int_{\frac{1}{2}\mathcal{E} } f. 
\]
Here we have used the arguments of Case 1 on $\mathcal{J}$ and that $\mathcal{E} \cap X \subset 2[(\frac{1}{2}\mathcal{E} ) \cap X]$, which uses that $0 \in \overline{X}$.
\end{proof}

\begin{remark}
By Corollary~\ref{cor: doubling on ellipsoids implies doubling}, if $0 < \inf_X a, \sup_X a < \infty$, then $f = a d^\alpha_{\partial X}$ defines a doubling measure.
\end{remark}

We conclude this section with three lemmas.
These lemmas use nothing about the optimal transport problem; they are facts about convex functions with centered sections.
The first lemma's proof may be found in \cite{C2}.

\begin{lemma}[Centered Sections]
Let $u : \R^n \to \R$ be a convex function whose graph contains no complete lines.
Then, for every $h > 0$ and $z \in \R^n$, there exists an affine function $\ell$ such that $\ell(z) = u(z) + h$ and the set
\[
S_h^c(z) := \{ x \in \R^n : u < \ell \}
\]
is centered at $z$.
\end{lemma}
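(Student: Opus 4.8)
The statement is a purely convex-geometric fact: given a convex $u$ whose graph contains no complete lines, we want, for every $z$ and every height $h>0$, an affine function $\ell$ with $\ell(z)=u(z)+h$ whose ``section'' $S_h^c(z)=\{u<\ell\}$ is centered at $z$, i.e. $z$ is the center of mass of $S_h^c(z)$. The plan is to run a fixed-point / continuity argument over the choice of slope of $\ell$. Normalize so that $z=0$ and $u(0)=0$; after subtracting the affine function $\ell$ we are choosing $p\in\R^n$ and looking at $S_p:=\{x:u(x)<p\cdot x+h\}$, and we want to find $p$ making the barycenter $b(p)$ of $S_p$ equal to $0$.

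\textbf{Key steps.} First I would check that for every $p\in\R^n$ the set $S_p$ is bounded, open, convex, nonempty, and contains $0$ in its interior. Nonemptiness and $0\in S_p$ are clear since $u(0)=0<h$. Boundedness is where the hypothesis that the graph of $u$ contains no complete line enters: if some $S_p$ were unbounded, then along a ray in the recession cone of $S_p$ the convex function $u(x)-p\cdot x$ would stay below $h$, hence (being convex and bounded above on a ray) be nonincreasing, and a standard argument produces a line in the graph, contradiction. Thus $b(p)$ is well-defined for all $p$. Second, I would establish that $p\mapsto b(p)$ is continuous: the sections $S_p$ vary continuously (in, say, the Hausdorff sense, using convexity and local uniform boundedness of the family, which again follows from the no-line hypothesis together with a compactness argument), and the barycenter depends continuously on the set. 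Third, and this is the crux, I would show that the map $p \mapsto b(p)$ has a fixed-point-type property forcing $b(p_*)=0$ for some $p_*$. The cleanest route is a variational one: consider the function
\[
\Phi(p):=\int_{S_p}\bigl(h+p\cdot x-u(x)\bigr)\,\d x,
\]
the volume under the graph of the paraboloid-like cap cut out by $\ell$. A direct computation (differentiating under the integral sign, with the boundary term vanishing because the integrand is $0$ on $\partial S_p$) gives $\nabla\Phi(p)=\int_{S_p}x\,\d x = |S_p|\,b(p)$. So it suffices to find a critical point of $\Phi$. Now $\Phi$ is smooth, nonnegative, and one checks it is coercive: as $|p|\to\infty$, the section $S_p$ is forced to ``tilt'' and $\Phi(p)\to\infty$ (again using no complete lines in the graph to control how $S_p$ degenerates). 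Hence $\Phi$ attains a global minimum at some $p_*$, at which $\nabla\Phi(p_*)=0$, i.e. $b(p_*)=0$. Setting $\ell(x):=p_*\cdot x + h$ (undoing the normalization, $\ell(x)=u(z)+p_*\cdot(x-z)+h$) gives the desired affine function.

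\textbf{Main obstacle.} The delicate points are the two places the ``no complete line'' hypothesis must be used quantitatively: (i) showing \emph{every} $S_p$ is bounded, and (ii) the coercivity (or, in the barycenter formulation, a degree/continuity argument on the sphere at infinity) that guarantees the minimum of $\Phi$ is attained rather than escaping to infinity. For (i) the argument is the standard fact that a convex function with no line in its graph and bounded above on a half-line must eventually increase; I would isolate this as the key lemma. For (ii) the point is that as $|p|\to\infty$ along a direction $e$, if $\Phi$ stayed bounded then $S_p$ would have to become very thin in the $e$-direction while remaining bounded in the complementary directions, and passing to the limit one again extracts a line in the graph of $u$; making this precise (e.g. via a normalized blow-down or John's-lemma rescaling of the sections) is the main technical work. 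An alternative to the coercivity step is to argue by Brouwer degree: the boundedness of the family $\{S_p:|p|\le R\}$ together with the no-line hypothesis shows $b(p)\ne 0$ for $|p|$ large in a way that the map $p\mapsto b(p)$ has nonzero degree, forcing a zero; I would use whichever is shorter to write cleanly. Uniqueness of $\ell$, if needed, follows from strict convexity of $\Phi$ on the affine hull of the relevant directions, but the statement as worded only asserts existence.
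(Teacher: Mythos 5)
The paper does not prove this lemma; it simply cites Caffarelli \cite{C2}, so your argument has to stand on its own. Your variational scheme is a legitimate (and essentially standard) route: with $z=0$, $u(0)=0$, setting $\Phi(p)=\int_{\R^n}(h+p\cdot x-u(x))_+\,\d x$ one indeed gets $\nabla\Phi(p)=\int_{S_p}x\,\d x$, so a minimizer of $\Phi$ yields a centered section. However, there is a genuine error at your first step: it is \emph{not} true that $S_p$ is bounded for every $p\in\R^n$. In one dimension take $u(x)=e^x$ (no line in the graph, since $u$ is not affine): for $p\le 0$ the set $\{e^x<px+h\}$ is unbounded; in higher dimensions $u(x)=e^{-x_1}+|x'|^2$ gives the same phenomenon for $p=0$. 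Your justification is a non sequitur: a convex function bounded above on a ray is indeed nonincreasing along it, but that does not produce a complete line in the graph (again $e^{-x_1}$). The same unproved boundedness is what your degree-theoretic alternative leans on, and your coercivity sketch (``thin sections produce a line in the limit'') is left as the main technical work, so as written the proof has a hole precisely where the no-line hypothesis must be used.

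The gap is fixable within your framework, and the repair is cleaner than what you sketch. The hypothesis that the graph of $u$ contains no complete line is equivalent to $\dom u^*$ having nonempty interior, and $S_p$ is bounded exactly when $p\in\interior(\dom u^*)$; in particular the set $P:=\{p:\Phi(p)<\infty\}=\interior(\dom u^*)$ is a nonempty open convex set, which is where the hypothesis enters. On $\R^n$, $\Phi$ is convex and lower semicontinuous (Fatou), and $\Phi\equiv+\infty$ off $P$, so boundary slopes take care of themselves; for coercivity note that $S_p\supset\{u<h/4\}\cap\{p\cdot x\ge 0\}$, so $|S_p|\ge\frac12|B_r|$ for a fixed small ball $B_r\subset\{u<h/4\}$, while $\max(h+p\cdot x-u(x))=h+u^*(p)\to\infty$ as $|p|\to\infty$; since the integrand is concave on $S_p$, $\Phi(p)\ge\frac{1}{n+1}|S_p|\,(h+u^*(p))\to\infty$. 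Hence the minimum of $\Phi$ is attained at some $p_*\in P$, where $\nabla\Phi(p_*)=|S_{p_*}|\,b(p_*)=0$, which is the desired centered section. With this correction your approach goes through; without it, the claim ``every $S_p$ is bounded'' is simply false and the argument as stated fails.
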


\begin{lemma}
\label{lem: centered sections are balanced}
Let $S_h^c(z)$ be a centered section for $u$ at $z \in \R^n$.
Let $z_1$ and $z_2$ be two opposite points on $\partial S_h^c(z)$, i.e., $z_2 = z + l(z - z_1)$ for some $l > 0$.
Then
\[
n^{-\frac{3}{2}} \leq l \leq n^{\frac{3}{2}}.
\]
\end{lemma}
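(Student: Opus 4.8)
The plan is to reduce the statement to a direct application of John's Lemma, exploiting the fact that in a \emph{centered} section the barycenter is also the center of the John ellipsoid.

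First I would record the shape of $S := S_h^c(z) = \{u < \ell\}$: since the graph of $u$ contains no complete lines, $S$ is a bounded, open, convex set, and its barycenter $z$ lies in its interior, so in particular $z \notin \partial S$ and the two opposite points $z_1, z_2 \in \partial S$ are both distinct from $z$. After translating so that $z = 0$, the hypothesis becomes $z_2 = -l z_1$ with $l > 0$, and $l = |z_2|/|z_1|$.

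Next I would invoke John's Lemma to obtain an ellipsoid $\mathcal{E}_{E,0}$, with the same center $0$, satisfying $\mathcal{E}_{E,0} \subset S \subset n^{3/2}\mathcal{E}_{E,0}$. Applying the linear map $E^{-1}$, which fixes $0$ and sends $\mathcal{E}_{E,0}$ to $B_1(0)$ and $n^{3/2}\mathcal{E}_{E,0}$ to $B_{n^{3/2}}(0)$, turns this into $B_1(0) \subset E^{-1}(S) \subset B_{n^{3/2}}(0)$. Setting $w_i := E^{-1}z_i$, each $w_i$ lies on $\partial(E^{-1}S)$, hence $1 \le |w_i| \le n^{3/2}$, while linearity gives $w_2 = -l w_1$. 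Therefore $l = |w_2|/|w_1| \in [n^{-3/2}, n^{3/2}]$, as claimed.

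I do not expect a genuine obstacle here; the only point worth emphasizing is the role of centeredness. Were $z$ merely an interior point of $S$ rather than its barycenter, John's Lemma would still give an ellipsoid comparable to $S$, but centered at the barycenter instead of at $z$, and then the ratio $l$ of the two half-chords of $S$ through $z$ could be arbitrarily large or small (take $z$ near a vertex of a long thin triangle). It is precisely the coincidence of the center of mass of $S$ with $z$ that forces both $z_1$ and $z_2$ into the shell $n^{3/2}\mathcal{E}_{E,0} \setminus \interior(\mathcal{E}_{E,0})$, which pins down $l$.
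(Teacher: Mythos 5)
Your proof is correct and follows essentially the same route as the paper: both arguments apply John's Lemma with the ellipsoid centered at the barycenter $z$ and trap the two boundary points of the chord through $z$ between the ellipsoid and its $n^{3/2}$-dilation. The only cosmetic difference is that you normalize by $E^{-1}$ to compare with balls, whereas the paper reads off the same bounds directly along the axis containing $z_1$ and $z_2$.
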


\begin{proof}
Without loss of generality, we may assume that $z = 0$ and that $z_1$ and $z_2$ lie on the $\e_1$-axis.
So, by John's lemma, $\mathcal{E} \subset S_h^c(0) \subset n^{3/2}\mathcal{E}$, for some ellipsoid $\mathcal{E}$, whose center of mass is also the origin.
Let $e > 0$ denote the $\e_1$-component of $\mathcal{E}  \cap \{ \text{positive } \e_1\text{-axis} \}$.
Abusing notation, we let $z_1 > 0$ and $- l z_1$ denote the $\e_1$-component of $z_1$ and $z_2$.
In turn, $e \leq z_1, l z_1 \leq n^{3/2}e$, from which it follows that $n^{-3/2} \leq l \leq n^{3/2}$, as desired.
\end{proof}

\begin{lemma}
\label{lem: max of u and affine in centered sec}
Let $S_h^c(z)$ be a centered section for $u$ at $z \in \R^n$.
Then
\[
h \leq \max_{S_h^c(z)} (\ell - u) \leq (1+n^{\frac{3}{2}})h.
\]
\end{lemma}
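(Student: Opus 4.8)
The plan is to establish the two inequalities separately, the left-hand one being immediate from the definition of $\ell$ and the right-hand one resting on the concavity of $\ell - u$ together with the balancedness of centered sections recorded in Lemma~\ref{lem: centered sections are balanced}.

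For the lower bound, I would simply evaluate $\ell - u$ at the center $z$: by the defining property of $\ell$ one has $(\ell - u)(z) = h$, and since $z \in S_h^c(z)$ this forces $\max_{S_h^c(z)}(\ell - u) \geq h$. For the upper bound, set $w := \ell - u$ and $S := S_h^c(z)$. As $u$ is convex and $\ell$ affine, $w$ is concave; it is positive in $S$, vanishes on $\partial S$ (where $u = \ell$ by continuity), and hence is nonnegative on $\overline{S}$. Let $x_0 \in \overline{S}$ be a point at which $w$ attains its maximum $M$. If $x_0 = z$, then $M = h$ and we are done; otherwise I would look at the chord of $S$ lying on the line through $z$ and $x_0$, with endpoints $q$ and $p$ labelled so that $x_0 \in [z,q]$ and $p$ lies on the opposite side of $z$. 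Writing $x_0 = z + s(q-z)$ with $s \in (0,1]$ and $p = z + l(z - q)$ with $l > 0$, Lemma~\ref{lem: centered sections are balanced} gives $l \geq n^{-3/2}$. A one-line computation then expresses $z = (1-t)x_0 + t p$ with $t = s/(s+l) \leq n^{3/2}/(1+n^{3/2})$, so that $1-t \geq (1+n^{3/2})^{-1}$. Concavity of $w$ now yields
\[
h = w(z) \geq (1-t)\, w(x_0) + t\, w(p) = (1-t)\, M \geq \frac{M}{1+n^{3/2}},
\]
i.e., $M \leq (1+n^{3/2})h$, as claimed.

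Since all the ingredients are elementary, there is no substantial obstacle here; the only points that require a little care are the bookkeeping in the choice of chord through $z$ — making sure $x_0$ and $p$ sit on opposite sides of $z$ so that Lemma~\ref{lem: centered sections are balanced} applies with the orientation used above — and, if one wishes to be scrupulous, noting that the maximum of the continuous function $w$ on the (bounded) convex body $S$ is attained.
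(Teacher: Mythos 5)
Your proof is correct and follows essentially the same route as the paper: both arguments restrict the concave function $\ell - u$ to the chord through $z$ and the maximum point, invoke Lemma~\ref{lem: centered sections are balanced} to control the position of $z$ along that chord, and conclude the upper bound $(1+n^{3/2})h$ (the paper phrases this via secant lines and a self-similar triangle, while you write $z$ directly as a convex combination of the maximum point and the opposite chord endpoint, which is just a more algebraic rendering of the same estimate). The lower bound via $(\ell-u)(z)=h$ is also exactly the paper's observation.
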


\begin{proof}
Without loss of generality, $z = 0$.

Since $(\ell - u)(0) = h$, by definition, the first inequality is trivial.

Now let $z_h \in S_h^c(0)$ be a point at which $\ell - u$ achieves its maximum value; let $z_1$ and $z_2$ be the opposite points on $\partial S_h^c(0)$ for which the segment $[z_1,z_2]$ contains $0$ and $x_h$; and let $\psi \geq 0$ be the one-dimensional concave function defined by $\ell - u$ on $[z_1,z_2]$.
Notice that the lines $\ell_i \subset \R^2$ determined by $(z_i,0)$ and $(0,h)$, for $i = 1,2$, are secant lines for graph of $\psi$.
Hence, the graph of $\psi$ must live under the union of the subgraphs of these two lines.
Assume that $z_h \in [0, z_2]$.
Otherwise, swap the roles of $z_1$ and $z_2$ in what follows.
Consider the triangle (in $\R^2$) over $[z_1,0]$ with height $h$ determined by the points $(z_1,0)$, $(0,0)$, and $(0,h)$.
Its maximal self-similar enlargement over $[z_1,z_2]$, whose base has right end point $(z_2,0)$ instead of $(0,0)$, has height $Ch$ with $1 + n^{-3/2} \leq C \leq 1 + n^{3/2}$ (see Lemma~\ref{lem: centered sections are balanced}), from which the second inequality follows.
\end{proof}

\section{Boundary Regularity of Maps with Convex Potentials Revisited}
\label{sec: Caff Bdry Reg Revisit}

In this section, we prove Theorem~\ref{thm: global Csigma}, and list some geometric properties of convex potentials defining optimal transports between absolutely continuous doubling measures on convex domains.
Let $u_0 : \R^n \to \R$ be a convex potential defining the optimal transport of Theorem~\ref{thm: global Csigma}.
It will be convenient to replace $u_0$ with its minimal convex extension outside of $X$.
More precisely, we consider the function
\[
u(x) := \sup_{z \in X, p \in \partial u_0(z)} \{ u_0(z) + p \cdot (x-z) \}.
\]
Similarly, we let $v_0$ be the Legendre transform of $u_0$, and
\[
\text{$v :=$ the minimal convex extension outside $Y$ of $v_0$}.
\]
Thus, $\nabla v$ is the optimal transport taking $g$ to $f$.

Given a centered section $S = S^c_h(z)$ for $u$, which exists at every $z \in \R^n$ (\cite{C2}), we define the normalized pair $(\tilde{u},\tilde{S})$ by
\[
\tilde{u}(x) := \frac{[u - \ell](A^{-1}x)}{h}
\text{ and }
\tilde{S} := A(S)
\]
where $A(\mathcal{E}) = B_1(0)$ and $\mathcal{E}$ is the John ellipsoid of $S$.
Moreover, we let $\tilde{f}$ and $\tilde{g}$ be the appropriate rescalings of $f$ and $g$ which ensure that $(\nabla \tilde{u})_{\#} \tilde{f} = \tilde{g}$.
Similarly, we define $\tilde{X} := A(X)$ and $\tilde{Y} := h^{-1}\nabla A^{-t}(Y)$.
Here and in the remainder of this work, we let 
\[
L^{-t} = (L^{-1})^t,
\] 
i.e., the transpose of the inverse of $L$, for any invertible transformation.

We first recall that optimal transports balance mass (\cite{B,V1}).

\begin{lemma}
\label{lem: Brenier soln}
Let $u : \R^n \to \R$ be convex and such that $(\nabla u)_\# f = g$, where $f$ and $g$ are two absolutely continuous measures. 
Then, for all Borel sets $B \subset \R^n$,
\[
\int_B f = \int_{\partial u(B)} g.
\]
\end{lemma}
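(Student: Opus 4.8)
The plan is to extract the identity directly from the push-forward property $(\nabla u)_\# f = g$, using only that a finite convex function is Lebesgue-almost everywhere differentiable. Let $D_u$ be the (Borel, Lebesgue-conull) set of differentiability points of $u$; since $f$ is absolutely continuous, $\nabla u$ is defined $f$-a.e.\ and $f(\R^n\setminus D_u)=0$. From the definition of push-forward one has $f\big((\nabla u)^{-1}(A)\big)=g(A)$ for every measurable $A$, where $(\nabla u)^{-1}(A):=\{x\in D_u:\nabla u(x)\in A\}$. Testing against $\1_{\partial u(B)}$ — a set which is analytic, being a projection of the closed graph of $\partial u$ intersected with $B\times\R^n$, and hence $g$-measurable — gives
\[
\int_{\partial u(B)} g = f\big(\{x\in D_u:\nabla u(x)\in\partial u(B)\}\big).
\]
Because $\nabla u(x)\in\partial u(x)\subset\partial u(B)$ for every $x\in B\cap D_u$, and $f(B)=f(B\cap D_u)$, the right-hand side is at least $\int_B f$. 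This is the ``easy'' inequality.

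For the reverse inequality I would introduce the Legendre transform $v$ of $u$ (as done above for $u_0$) and use the inversion of subdifferentials: $y\in\partial u(x)$ iff $x\in\partial v(y)$. Let $D_v$ denote the set of points at which $v$ is finite and differentiable; its complement is Lebesgue-null, and since $g$ is supported in $\dom\partial v$ and is absolutely continuous, $g(\R^n\setminus D_v)=0$. Now suppose $x\in D_u$ satisfies $\nabla u(x)\in\partial u(B)$, say $\nabla u(x)\in\partial u(x')$ with $x'\in B$. Then both $x$ and $x'$ lie in $\partial v(\nabla u(x))$; if in addition $\nabla u(x)\in D_v$ this subdifferential is the singleton $\{\nabla v(\nabla u(x))\}$, whence $x=x'\in B$. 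Consequently
\[
\{x\in D_u:\nabla u(x)\in\partial u(B)\}\subset B\cup(\nabla u)^{-1}(\R^n\setminus D_v),
\]
and the $f$-measure of the last set equals $g(\R^n\setminus D_v)=0$. Combined with the first paragraph, $\int_{\partial u(B)} g=\int_B f$, as claimed.

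The main (and essentially only) obstacle is this reverse inequality: a priori $\nabla u$ could identify a positive-$f$-measure set of points outside $B$ with points of $\partial u(B)$, and excluding this is exactly where the absolute continuity of the \emph{target} $g$ — not just of the source $f$ — enters, through the fact that the non-differentiability set of the Legendre transform $v$ is Lebesgue-null. The measurability subtleties (analyticity of $\partial u(B)$, Borel measurability of $\nabla u$ on $D_u$) are dealt with by routine approximation with Borel sets and do not affect the substance of the argument.
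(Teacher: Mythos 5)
Your proof is correct, and it is worth noting that the paper itself does not prove this lemma: it is recalled from \cite{B,V1}, so your argument supplies the standard proof behind that citation. Both directions are sound: the easy inclusion $B\cap D_u\subset\{x\in D_u:\nabla u(x)\in\partial u(B)\}$ gives $\int_B f\leq\int_{\partial u(B)}g$, and the reverse inequality correctly isolates the crux, namely that a point $y\in\partial u(B)$ hit by $\nabla u(x)$ with $x\notin B$ must satisfy $\#\,\partial v(y)\geq 2$, so such $y$ lie in the non-differentiability set of the Legendre transform $v$, which is Lebesgue-null inside $\dom v$ and hence $g$-null by the absolute continuity of the \emph{target} measure. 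One clause is imprecise: the complement of $D_v$ need not be Lebesgue-null, since $v$ may equal $+\infty$ on a set of positive (even infinite) measure; what is true, and what your sentence in effect uses, is that $g$ is concentrated on $\dom\partial v\subset\dom v$ (because $\nabla u(x)\in\partial u(x)$ forces $x\in\partial v(\nabla u(x))$), while $\dom v\setminus D_v$ is contained in the union of $\partial(\dom v)$ and the interior non-differentiability set of $v$, both Lebesgue-null, so $g(\R^n\setminus D_v)=0$ follows from the two ingredients you name. With that repair, and the measurability remarks you already make (analyticity of $\partial u(B)$, Borel regularity of $\nabla u$ on $D_u$, extension of the push-forward identity from Borel to universally measurable sets via the completion of $g=(\nabla u)_\#f$), the argument is complete.
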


Next we prove an Alexandrov maximum principle for normalized pairs.

\begin{lemma}
\label{lem: AMP}
There is an increasing function $\vartheta : [0,\infty) \to [0,\infty)$, depending only on dimension and the doubling constants of $f$ and $g$, such that $\lim_{d \to 0} \vartheta(d) = 0$ and
\[
|\tilde{u}(x)| \leq \vartheta(d_{\partial \tilde{S}}(x)) \text{ for } x \in \tilde{S}.
\]
Here $\tilde{S}$ is any normalized centered section based at any point in $\overline{X}$.
\end{lemma}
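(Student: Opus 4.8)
The plan is to prove the Alexandrov maximum principle for normalized pairs by combining the classical Alexandrov estimate with the doubling hypothesis on $f$ and $g$, playing the two off against each other. Fix a normalized pair $(\tilde u, \tilde S)$ based at some $z\in\overline X$, so $\tilde S$ is comparable to $B_1(0)$ (it sits between $B_{c(n)}$ and $B_1$ by John's lemma, since $A$ sends the John ellipsoid of $S$ to the unit ball), and $\min_{\tilde S}\tilde u = -1$ is attained at the (normalized) center. The claim is a quantitative, doubling-constant-dependent replacement for the statement that would be false in general: $|\tilde u(x)|$ small when $x$ is close to $\partial\tilde S$. The target inequality is equivalent to: $|\tilde u(x)| \ge \varepsilon$ forces $d_{\partial\tilde S}(x) \ge \delta(\varepsilon) > 0$, with $\delta$ depending only on $n$ and the doubling constants.

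First I would establish the easy, ``pure Alexandrov'' half. For $x\in\tilde S$ with $\tilde u(x) = -s$ (so $0\le s\le 1$ and $\tilde u \le 0$ on $\tilde S$ since $\tilde u = 0$ on $\partial\tilde S$ by construction), the Alexandrov maximum principle gives $s^n \le C(n)\, d_{\partial\tilde S}(x)\, |\partial\tilde u(\tilde S)|$ — more precisely $s \le C(n) \, d_{\partial \tilde S}(x)^{1/n}\,|\partial\tilde u(\tilde S)|^{1/n}$ — so I need an upper bound on the Monge–Ampère mass $|\partial\tilde u(\tilde S)|$. This is where the doubling hypothesis enters. By Lemma~\ref{lem: Brenier soln}, the total mass pushed through is $\tilde g(\partial\tilde u(\tilde S)) = \tilde f(\tilde S)$; but I need the Lebesgue measure $|\partial\tilde u(\tilde S)|$, not $\tilde g$-measure. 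To control it, I would use that $\partial\tilde u(\tilde S)$ is a convex set (it is the subdifferential image of a section), apply John's lemma to it, and use the doubling property of $\tilde g$: comparing $\tilde g$ of $\partial\tilde u(\tilde S)$ to $\tilde g$ of its half-dilation, iterated, shows that the $\tilde g$-mass of $\partial\tilde u(\tilde S)$ is bounded below by a fixed fraction of the $\tilde g$-mass of any fixed concentric piece. Dually, one argues with $\nabla v$ on the target side: the normalization was chosen precisely so that the dual section $\partial\tilde u(\tilde S)$ is itself a normalized (centered, up to bounded factor) section for $\tilde v$, whence $|\partial\tilde u(\tilde S)|$ is bounded above and below by universal constants — this is the standard fact that for doubling measures the dual of a normalized section is normalized. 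Granting this, $|\partial\tilde u(\tilde S)| \le C$ universally, and the Alexandrov estimate yields $|\tilde u(x)| \le C(n)\, d_{\partial\tilde S}(x)^{1/n}$, which already gives a $\vartheta$ of the required form — except that I must double-check it is the doubling constants, not ellipticity of $\tilde g$, that control the dual section, and that the argument is genuinely symmetric in $f$ and $g$.

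The main obstacle, and the step I would spend the most care on, is precisely the two-sided control $|\partial\tilde u(\tilde S)| \asymp 1$ under only a doubling (not a uniform-ellipticity) assumption. The lower bound on $|\partial\tilde u(\tilde S)|$ is the subtle direction: one wants to say that the image section cannot be too thin. I would argue by contradiction and renormalization — if $\partial\tilde u(\tilde S)$ had very small volume, then its John ellipsoid $\mathcal E'$ has a very short axis; rescale the target by $A' $ normalizing $\mathcal E'$, so the dual normalized potential $\tilde v$ has a section comparable to $B_1$; now the doubling constant of $\tilde g$ (which is inherited from $g$ under affine maps, since the doubling property as defined is affine-invariant on convex sets) forces $\tilde g(\tfrac12 \partial\tilde u(\tilde S)) \ge c\,\tilde g(\partial\tilde u(\tilde S)) = c\,\tilde f(\tilde S)$, and iterating the doubling inequality down to a unit-scale ball inside $\partial\tilde u(\tilde S)$ while simultaneously iterating the doubling inequality for $\tilde f$ up from a unit ball inside $\tilde S$ produces a contradiction with the normalization $\min\tilde u = -1$ via Lemma~\ref{lem: max of u and affine in centered sec} and the balancedness Lemma~\ref{lem: centered sections are balanced}. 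Once the two-sided bound on the dual section is in hand, I would assemble the pieces: apply Alexandrov on $\tilde S$, apply it again on the dual side to rule out degeneration in the other direction, and define $\vartheta(d) := C(n)\, d^{1/n}$ (or whatever modulus falls out), noting $\vartheta(d)\to 0$ as $d\to 0$ and that all constants depend only on $n$ and the doubling constants of $f$ and $g$, as claimed. I would also verify the uniformity over all base points $z\in\overline X$ — this is automatic since the normalization and the doubling constants do not see $z$.
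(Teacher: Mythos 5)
There is a genuine gap, and it sits exactly where you flagged ``the main obstacle.'' Your plan reduces the lemma to the classical Alexandrov estimate plus the claim that $|\partial \tilde{u}(\tilde{S})| \asymp 1$ (equivalently, that the dual of a normalized section is normalized). But under a doubling assumption on the \emph{data} only, that two-sided volume control is not available at this stage: in the paper it is Corollary~\ref{cor: sections are ellipsoids} (and Corollary~\ref{cor: image of section is comp to dual section}), which is obtained by running Caffarelli's arguments \emph{after} Lemma~\ref{lem: AMP}, via the engulfing property and strict convexity that the lemma makes possible. This is precisely why the paper's remark following Corollary~\ref{cor: sections are ellipsoids} presents $\vartheta(d)=Cd^{1/n}$ only as an a posteriori improvement. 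Your proposed contradiction argument for the volume bound never identifies a mechanism: doubling only compares the $\tilde{g}$-mass of a convex set with that of its half-dilation, and mass balance only equates $\tilde{g}$-mass of the image with $\tilde{f}$-mass of the section; neither converts $\tilde{g}$-mass into Lebesgue measure, so ``$\partial\tilde{u}(\tilde{S})$ has tiny (or huge) volume'' produces no contradiction by itself. (Also, $\partial\tilde{u}(\tilde{S})$ need not be convex, so applying John's lemma to it is unjustified, and in any case it is the \emph{upper} bound on $|\partial\tilde{u}(\tilde{S})|$, not the lower one you call subtle, that your Alexandrov step requires.)

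What is missing is the geometric ingredient the paper uses in place of Lebesgue measure: if $|\tilde{u}(x)|$ is not small while $d=d_{\partial\tilde{S}}(x)$ is small, then the cone over $\tilde{S}$ with vertex $(x,\tilde{u}(x))$ forces $\partial\tilde{u}(\tilde{S})$ to contain a long thin convex set $K=\conv\bigl(B_{r|\tilde{u}|}(0)\cup \tfrac{|\tilde{u}|}{d}\e\bigr)$ whose length blows up as $d\to 0$. One then caps $\tilde{g}(K)$ from above by $\tilde{g}(B_R)$ using mass balance, doubling of $\tilde{f}$ on $\tilde S$ versus $\tfrac12\tilde S$, and the gradient bound $\partial\tilde{u}(\tfrac12\tilde{S})\subset B_R(0)$; and from below one builds a chain of $M(d)\to\infty$ convex sets $K_m\subset K$ whose half-dilations are disjoint, so doubling of $\tilde{g}$ gives $M(d)\,\tilde{g}(B_{r|\tilde{u}|})\leq C\,\tilde{g}(K)$. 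Comparing the two bounds through finitely many further doublings from $B_R$ down to $B_{r|\tilde{u}|}$ yields $M(d)\leq C^{k(|\tilde{u}|)}$ and hence the modulus $\vartheta$. Without this cone-and-chain construction (or an equivalent substitute), your argument either assumes the conclusion's downstream corollaries or stalls at the unproved volume bound.
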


\begin{proof}
For notational simplicity, we suppress the dependence on $x$ and set
\[
d = d_{\partial \tilde{S}}(x) \text{ and } |\tilde{u}| = |\tilde{u}(x)|.
\]
First, observe that
\[
\partial \tilde{u}(\tfrac{1}{2}\tilde{S}) \subset B_R(0).
\]
Also, considering the cone generated by $(x, \tilde{u}(x))$ and $\tilde{S}$,
\[
\partial \tilde{u}(\tilde{S}) \supset K := \conv ( B_{r|\tilde{u}|}(0) \cup \tfrac{|\tilde{u}|}{d}\e)
\]
for some unit vector $\e = \e(x)$ and two positive constants $R$ and $r$ depending only on dimension.
Since the slope of the plane which determines $S^c_h$ is in $Y$, $0 \in \tilde{Y}$.
By assumption, the center of $\tilde{S}$ is in the closure of $\tilde{X}$.
In turn,
\[
\int_K \tilde{g} \leq \int_{\partial \tilde{u}(
\tilde{S})} \tilde{g} = \int_{\tilde{S}} \tilde{f} \leq C \int_{\frac{1}{2}\tilde{S}} \tilde{f} = C \int_{\partial \tilde{u}(\frac{1}{2}\tilde{S})} \tilde{g} \leq C \int_{B_R(0)} \tilde{g}.
\]
(Normalization affects neither the doubling property nor the doubling constants.)

Now let $B_{r_m}(t_m \e) \subset \conv( B_r(0) \cup \frac{1}{d}\e)$ for $m = 1, \dots, M$ be a sequence of balls chosen so that 
\[
\tfrac{1}{2}K_m \subset K_m \setminus K_{m-1} \text{ with } K_m := \conv ( B_{r_m}(t_m\e) \cup B_r(0)) \text{ and } K_0 := B_r(0). 
\]
By construction, $\{ \frac{1}{2}K_m \}_{m = 1}^M$ is a disjoint family, and
\[
M = M(d) \to \infty \text{ as } d \to 0.
\]
Hence, if we consider the collection
\[
\{ B_{r_m|\tilde{u}|}(t_m|\tilde{u}| \e) \}_{m=1}^M \subset K \subset \tilde{Y},
\]
and redefine $K_m := \conv ( B_{r_m|\tilde{u}|}(t_m|\tilde{u}|\e) \cup B_{r|\tilde{u}|}(0))$, we see that the redefined family $\{ \frac{1}{2}K_m \}_{m = 1}^M$ is also disjoint.
So
\[
M\int_{B_{r|\tilde{u}|}(0)} \tilde{g} \leq \sum_{m = 1}^{M} \int_{K_m} \tilde{g} \leq C \sum_{m = 1}^{M} \int_{\frac{1}{2} K_m} \tilde{g} \leq C \int_{K} \tilde{g}.
\]

Combing the two chains of inequalities above, we find that
\[
M \int_{B_{r|\tilde{u}|}(0)} \tilde{g} \leq C \int_{B_R(0)} \tilde{g} \leq C^k \int_{B_{r|\tilde{u}|}(0)} \tilde{g}
\]
where
\[
k = \bigg{\lceil} \frac{\log \frac{r}{R} |\tilde{u}|}{\log \frac{1}{2}}  \bigg{\rceil}.
\]
In turn, $M \leq C^k$.
Solving for $|\tilde{u}|$ concludes the proof.
\end{proof}

With Lemma~\ref{lem: AMP} in hand, Theorem~\ref{thm: global Csigma} follows from Caffarelli's arguments (\cite{C2,C3}).
Indeed, we first find that centered sections based at points in $\overline{X}$ have an engulfing property.
To prove this property, we replace Caffarelli's modulus for normalized solutions $Cd^{1/n}$, i.e, the classical Alexandrov maximum principle modulus, with the modulus $\vartheta(d)$ from Lemma~\ref{lem: AMP} in his proof.

\begin{lemma}
\label{lem: engulfing}
For any pair of constants $0 \leq \underline{t} < \overline{t} \leq 1$, there exists a constant $0 < t_0 \leq 1$ such that
\[
S^c_{t_0 h}(z) \subset \overline{t} S^c_h(x)
\]
for all $x \in \overline{X}$ and all $z \in \underline{t} S^c_h(x) \cap \overline{X}$.
The constant $t_0$ depends on $\underline{t}$, $\overline{t}$, dimension, and the doubling constants of $f$ and $g$.
\end{lemma}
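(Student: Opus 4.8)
The plan is to run Caffarelli's normalization argument from \cite{C2,C3}, replacing throughout the classical Alexandrov modulus $Cd^{1/n}$ for normalized solutions by the modulus $\vartheta$ of Lemma~\ref{lem: AMP}. First we fix $x\in\overline X$ and the section $S:=S^c_h(x)$, and pass to the normalized pair $(\tilde u,\tilde S)$, so that $B_1(0)\subset\tilde S\subset B_{n^{3/2}}(0)$, the origin $0=A(x)$ is the center of $\tilde S$, $\tilde u=0$ on $\partial\tilde S$, $\tilde u(0)=-1$, and $-(1+n^{3/2})\le\min_{\tilde S}\tilde u\le-1$ by Lemma~\ref{lem: max of u and affine in centered sec}. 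Writing $\tilde z:=A(z)$, a point $z\in\underline t\,S\cap\overline X$ corresponds to $\tilde z\in\underline t\,\tilde S\cap\overline{\tilde X}$, and balancedness of $\tilde S$ about its center gives $\dist(\tilde z,\partial\tilde S)\ge1-\underline t$. Undoing the normalization, the assertion is equivalent to: for every normalized configuration arising this way and every admissible $\tilde z$, the centered section $S^c_{t_0}(\tilde z)$ of $\tilde u$ lies in $\overline t\,\tilde S$.

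Next comes a purely geometric reduction: it suffices to establish the uniform shrinking estimate
\[
\rho(s):=\sup\bigl\{\,\diam S^c_s(\tilde z)\,\bigr\}\longrightarrow 0\quad\text{as }s\to0^{+},
\]
the supremum ranging over all normalized configurations coming from $x\in\overline X$, $h>0$, and all $\tilde z\in\underline t\,\tilde S\cap\overline{\tilde X}$. Indeed, for a convex set $\tilde S$ centered at the origin one has $a\tilde S+b\tilde S=(a+b)\tilde S$, so $\underline t\,\tilde S+B_{\overline t-\underline t}(0)\subset\underline t\,\tilde S+(\overline t-\underline t)\tilde S=\overline t\,\tilde S$; once $\rho(t_0)\le\overline t-\underline t$ we therefore get $S^c_{t_0}(\tilde z)\subset B_{\rho(t_0)}(\tilde z)\subset\overline t\,\tilde S$ for every admissible $\tilde z$, and $t_0$ depends only on $\underline t$, $\overline t$, $n$, and the doubling constants of $f$ and $g$.

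The shrinking estimate is the quantitative strict-convexity statement for normalized potentials between doubling measures, and it is obtained exactly as in \cite{C2,C3}, the only inputs being the normalized maximum principle (Lemma~\ref{lem: AMP}), used in place of the classical Alexandrov estimate with $\vartheta$ replacing $Cd^{1/n}$; the mass-balance identity (Lemma~\ref{lem: Brenier soln}); and the doubling property. Concretely, one argues by contradiction and compactness: a failure yields normalized potentials $\tilde u_k$, points $\tilde z_k\in\underline t\,\tilde S_k\cap\overline{\tilde X_k}$, heights $s_k\to0$, and boundary points $w_k\in\partial S^c_{s_k}(\tilde z_k)$ with $|w_k-\tilde z_k|\ge r_0>0$ (and, for $s_k$ small, bounded above by a dimensional constant, a standard consequence of the uniform growth of normalized potentials). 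The two-sided bounds on $\tilde S_k$ together with Lemma~\ref{lem: AMP} make $\{\tilde u_k\}$ precompact, so along a subsequence $\tilde u_k\to\tilde u_\infty$ locally uniformly, $\tilde S_k\to\tilde S_\infty$, $\tilde X_k\to\tilde X_\infty$, $\tilde z_k\to\tilde z_\infty\in\underline t\,\tilde S_\infty$, $w_k\to w_\infty$ with $|w_\infty-\tilde z_\infty|\ge r_0$; passing to the limit in Lemma~\ref{lem: AMP} keeps $|\tilde u_\infty|\le\vartheta(d_{\partial\tilde S_\infty})$, so $\tilde u_\infty$ carries no line in its graph, and the uniform doubling of $\tilde f_k,\tilde g_k$ (untouched by normalization) together with the normalization of their masses makes $\tilde f_\infty,\tilde g_\infty$ nonzero doubling measures. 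Looking along the segment through $\tilde z_k$ joining the two opposite points of $S^c_{s_k}(\tilde z_k)$ at which its defining affine function is attained (so $\tilde z_k$ is interior to the segment by Lemma~\ref{lem: centered sections are balanced}), where $\tilde u_k$ lies below that affine function, meets it at the endpoints, and exceeds it by exactly $s_k\to0$ at $\tilde z_k$, one finds in the limit that $\tilde u_\infty$ agrees with an affine function along a nondegenerate segment through $\tilde z_\infty$, which is interior to $\tilde X_\infty$. This contradicts the strict convexity of Brenier solutions between doubling measures, Caffarelli's theorem: an exposed point of the contact set interior to $\tilde X_\infty$ produces a section around it whose $\tilde f_\infty$-mass is bounded below (doubling, its center lying in $\spt\tilde f_\infty$) while the $\tilde g_\infty$-mass of its gradient image is arbitrarily small (the contact collapses that image onto a hyperplane), which is impossible.

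The two geometric reductions and the compactness bookkeeping (boundedness of the sections $S^c_{s_k}(\tilde z_k)$, normalization of masses, convergence of convex sets) are routine and mirror \cite{C2,C3}. The crux, and the only place where the doubling hypothesis and the limiting behavior of the modulus $\vartheta$ are genuinely used, is the exclusion of interior affine pieces of $\tilde u_\infty$; the main point to check in adapting Caffarelli's proof is that only the qualitative features of the modulus—that $\vartheta$ is increasing with $\vartheta(0^+)=0$—enter his arguments, so that they apply with $\vartheta$ in place of $Cd^{1/n}$ without change.
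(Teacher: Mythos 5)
Your reduction to the normalized picture and the observation that it suffices to prove a uniform shrinking estimate for centered sections based at points of $\underline{t}\tilde S\cap\overline{\tilde X}$ are fine, and they do mirror the setup of \cite{C2,C3}. The genuine gap is in the punchline of the compactness argument. You conclude by saying the limiting affine segment through $\tilde z_\infty$ ``is interior to $\tilde X_\infty$'' and then contradict interior strict convexity of Brenier solutions between doubling measures. But the lemma is stated for $x\in\overline X$ and $z\in\underline{t}S^c_h(x)\cap\overline X$, so both base points may lie on $\partial X$; in the limit $\tilde z_\infty$ can sit on $\partial\tilde X_\infty$, and the centered sections $S^c_{s_k}(\tilde z_k)$ are not confined to $\overline{\tilde X_k}$ at all, so the limiting segment may meet $\overline{\tilde X_\infty}$ only at $\tilde z_\infty$ or lie largely in the exterior region. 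In the exterior, $\tilde u_\infty$ comes from the minimal convex extension (a supremum of tangent planes from $X$), which genuinely contains affine segments, so their existence there is no contradiction; and the ``exposed point of the contact set interior to $\tilde X_\infty$'' that your mass argument needs may simply not exist. Excluding affine behavior at boundary points is exactly the content of Caffarelli's boundary argument (it uses that the slope of the defining plane lies in $\overline Y$, duality/mass balance between boundary sections of $u$ and of $v$, and the doubling of \emph{both} $f$ and $g$ --- note the statement's $t_0$ depends on both doubling constants, while your final step only uses doubling of $\tilde f_\infty$ and smallness of the $\tilde g_\infty$-mass of a collapsed image). Moreover, quoting ``strict convexity of Brenier solutions between doubling measures'' is circular in this paper: strict convexity in $\overline X$ (Corollary~\ref{cor: strict convexity}) is deduced \emph{from} the engulfing lemma, and Caffarelli's interior theorem does not cover the boundary case you need.

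By contrast, the paper's proof is not a compactness argument at all: it repeats Caffarelli's direct, quantitative proof of the engulfing property for boundary centered sections from \cite{C2,C3}, with the sole modification that the classical Alexandrov modulus $Cd^{1/n}$ for normalized solutions is replaced by the modulus $\vartheta(d)$ of Lemma~\ref{lem: AMP} (only monotonicity and $\vartheta(0^+)=0$ are used). A secondary, smaller issue in your write-up: the claim that $|w_k-\tilde z_k|$ is bounded above ``by uniform growth of normalized potentials'' is unsupported --- no uniform growth outside $\tilde S_k$ is available at this stage (such growth is essentially equivalent to what is being proved); one can sidestep this by cutting the segment at $\partial(\overline t\,\tilde S_k)$, but then the equality at the far endpoint, which you use to force the limit to be affine on the whole segment, is lost and the argument has to be rerun. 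As it stands, the boundary case --- the whole point of the lemma --- is not handled.
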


Second, we obtain that $u$ is strictly convex in $\overline{X}$ (cf., \cite[Corollary~2.3]{C3}).
And, by duality and iteration, Theorem~\ref{thm: global Csigma}.
 
\begin{corollary}
\label{cor: strict convexity}
A constant $c > 0$, depending only on dimension and the doubling constants of $f$ and $g$, exists for which
\[
u(z) \geq u(x) + p \cdot (z - x) + c h
\]
for all $x \in \overline{X}$, $p \in \partial u(x)$, and all $z \in \partial S^c_h(x) \cap \overline{X}$.
\end{corollary}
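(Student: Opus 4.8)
The plan is to derive strict convexity from the engulfing property (Lemma~\ref{lem: engulfing}) together with the Alexandrov-type maximum principle (Lemma~\ref{lem: AMP}), mimicking Caffarelli's argument in \cite[Corollary~2.3]{C3} but carried out only at points of $\overline{X}$. Fix $x \in \overline{X}$ and $p \in \partial u(x)$; after subtracting the supporting plane $\ell_x(z) := u(x) + p\cdot(z-x)$ we may assume $x = 0$, $u(0) = 0$, and $u \geq 0$. We must produce $c > 0$ so that $u(z) \geq ch$ for every $z \in \partial S^c_h(0) \cap \overline{X}$. Suppose not: then there is a point $z \in \partial S^c_h(0) \cap \overline{X}$ with $u(z)$ much smaller than $h$, say $u(z) = \varepsilon h$ with $\varepsilon$ to be contradicted.

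First I would renormalize: let $(\tilde u, \tilde S)$ be the normalized pair associated to $S = S^c_h(0)$, so $\tilde S = A(S)$ with $A$ the affine map sending the John ellipsoid of $S$ to $B_1(0)$, and $\tilde u = (u - \ell)/h \circ A^{-1}$, where $\ell$ is the defining affine function of the section. Since $0$ is the center of $S$, the point $\tilde 0 := A(0)$ is the center of $\tilde S \supset B_1(0)$, and $\tilde z := A(z) \in \partial \tilde S \cap \overline{\tilde X}$. Note $\tilde u(\tilde z) = (u(z) - \ell(z))/h$, and because $z \in \partial S^c_h(0)$ we have $\ell(z) = u(z) + (\text{value of the section})$; in fact on $\partial S$ one has $\ell = u + h$ is false in general — rather $\ell(z) - u(z)$ ranges in $[h, (1+n^{3/2})h]$ by Lemma~\ref{lem: max of u and affine in centered sec} reasoning, so $|\tilde u(\tilde z)|$ is comparable to $1$. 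The key point is instead to track the supporting plane $\ell_x \equiv 0$ at the original base point: the section $S^c_h(0)$ lies strictly above this plane except at $0$, so on $\partial S$ the function $u$ itself (not $u - \ell$) is what records strict convexity, and after normalization the quantity $\tilde u(\tilde z) + (\text{affine correction})$ being small forces, via convexity and the location of $\tilde z$ near $\partial \tilde S$, that $u$ is nearly affine along the segment from $0$ to a point deep inside $\tilde S$. Concretely, by convexity $u \leq \varepsilon h \cdot t$ along $[0,z]$ reparametrized, so on a sub-section $S^c_{t_0 h}(0)$ — which by Lemma~\ref{lem: engulfing} with $\underline t = 0$ is contained in $\overline t S^c_h(0)$ for a fixed small $\overline t$ — the function $u$ would be bounded by $C\varepsilon h$, hence its normalized version on that smaller section would have maximum oscillation $\leq C\varepsilon$. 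Iterating the engulfing property $N$ times produces a nested sequence of centered sections $S^c_{t_0^N h}(0) \subset \overline t^{\,N} S^c_h(0)$ shrinking to $0$, on each of which the normalized solution has oscillation controlled by a geometric factor; comparing this with the lower bound furnished by Lemma~\ref{lem: AMP} — which says a normalized solution cannot be too flat near the center without its section degenerating — yields $N \leq N(\varepsilon)$ with $N(\varepsilon) \to \infty$ as $\varepsilon \to 0$, a contradiction once $\varepsilon$ is small enough. This pins down the uniform constant $c$.

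In more detail, the iteration step is: if $u \leq \delta h$ on $S^c_h(0) \cap \overline{X}$ with $\delta$ small, normalize to get $0 \leq \tilde u \leq \delta$ on $B_1(0) \subset \tilde S$ (using that $\tilde u \geq 0$ since $u \geq \ell_x = 0$, after noting the supporting plane at the center becomes a supporting plane for $\tilde u$ at $\tilde 0$). By Lemma~\ref{lem: AMP}, $|\tilde u(y)| \leq \vartheta(d_{\partial\tilde S}(y))$; but also, running the engulfing lemma, the smaller centered section $S^c_{t_0 h}(0)$ sits inside $\overline t S^c_h(0)$, so at the original scale $u \leq \delta h$ there too, and the new normalization multiplies heights by $1/t_0$, giving $\tilde u_{new} \leq \delta/t_0$ on the new $B_1$. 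Thus after $N$ steps we have normalized solutions with sup bound $\delta / t_0^N$ on their unit balls — this \emph{grows}, so instead the correct bookkeeping is on the dual side or via the strict-convexity-from-engulfing argument of \cite{C3} directly: the point is that the centered sections $S^c_{t_0^N h}(0)$ cannot all contain $z$, because eventually $t_0^N h < u(z)$, while $z \in \partial S^c_h(0)$ and the engulfing chain forces $z \notin S^c_{t_0 h}(0)$ already. Quantitatively, $u(z) = \varepsilon h$ combined with $z \in \partial S^c_h(0)$ and the balancing Lemmas~\ref{lem: centered sections are balanced}–\ref{lem: max of u and affine in centered sec} shows the section $S^c_{\varepsilon h/C}(0)$ reaches a fixed fraction of the way from $0$ to $z$, hence is not compactly contained in $S^c_h(0)$ uniformly; but Lemma~\ref{lem: engulfing} (with $\underline t$ a fixed small constant, $\overline t = 1/2$, say) guarantees it \emph{is} contained in $\tfrac12 S^c_h(0)$ once $\varepsilon/C \leq t_0$, i.e., once $\varepsilon$ is below a fixed threshold. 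This contradiction gives $u(z) \geq c h$ with $c$ depending only on $n$ and the doubling constants, via $t_0$ from Lemma~\ref{lem: engulfing} and $\vartheta$ from Lemma~\ref{lem: AMP}.

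The main obstacle I anticipate is the correct choice of which affine function to subtract and hence which normalization to track: the statement is phrased with respect to an arbitrary supporting plane $\ell_x$ at $x$ (not the section-defining plane $\ell$), and one must carefully verify that after normalizing the section $S^c_h(x)$ the image of $\ell_x$ remains a supporting plane for $\tilde u$ at the \emph{center} $\tilde 0$ of $\tilde S$, so that $\tilde u \geq 0$ there and Lemma~\ref{lem: AMP}'s two-sided bound can be applied to control $\tilde u$ near its minimum. The rest is a faithful transcription of Caffarelli's engulfing-to-strict-convexity argument, with his Alexandrov modulus $Cd^{1/n}$ replaced throughout by $\vartheta(d)$, and with all base points and comparison points restricted to $\overline{X}$, which is exactly the generality in which Lemmas~\ref{lem: AMP} and \ref{lem: engulfing} have been established.
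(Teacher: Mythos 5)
Your reduction (subtracting the supporting plane, arguing by contradiction at a boundary point $z$ with $u(z)=\varepsilon h$) and your use of the engulfing lemma at the fixed base point (taking $\underline t=0$, $\overline t=\tfrac12$, so that $S^c_{t_0h}(0)\subset\tfrac12 S^c_h(0)$) are fine, but the pivotal step of your final argument is unjustified. You claim that $u(z)=\varepsilon h$ at $z\in\partial S^c_h(0)$, together with the balancing Lemmas~\ref{lem: centered sections are balanced} and~\ref{lem: max of u and affine in centered sec}, forces the centered section $S^c_{\varepsilon h/C}(0)$ to reach a fixed fraction of the segment $[0,z]$. That inference uses no information about $f$ and $g$, while the balancing lemmas hold for every convex function admitting centered sections, and for a general convex function the claim is false. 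In the plane take $u=\varepsilon (x_1)_+ + A(x_1)_- + Bx_2^2$ with $A$ large: the centered section of height $t$ at the origin is defined by an affine function $t+qx_1$ with $q=(\varepsilon-A)/2$ \emph{independent of $t$}, so its extent in the $+x_1$ direction is proportional to $t$; at the boundary point $z$ of $S^c_h(0)$ in that direction one has $u(z)\approx (2\varepsilon/A)\,h$, yet $S^c_t(0)$ with $t\ll h$ reaches only the fraction $t/h$ of the way to $z$. So both the corollary's conclusion and your intermediate claim fail for this $u$: the claim genuinely requires the measure hypotheses. What you are implicitly invoking is the inclusion of the classical section $S_{\varepsilon h}(u,0,0)$ (which does contain the segment $[0,z)$) into a centered section of comparable height, i.e.\ one half of Corollary~\ref{cor: equivalence of sections}; but in the paper that comparability is \emph{deduced from} Corollary~\ref{cor: strict convexity}, so using it here is circular, and proving it directly is essentially the statement you are trying to prove.

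There are also internal inconsistencies in the earlier parts of the proposal. On $\partial S^c_h(0)$ one has $u=\ell$, so $\ell-u$ vanishes there; Lemma~\ref{lem: max of u and affine in centered sec} controls $\max_S(\ell-u)$, attained in the interior, so the assertion that $\ell(z)-u(z)\in[h,(1+n^{3/2})h]$ and $|\tilde u(\tilde z)|\sim 1$ at a boundary point is wrong. Lemma~\ref{lem: AMP} is an upper bound on $|\tilde u|$ near $\partial\tilde S$, not a nondegeneracy statement, so it cannot play the role of the ``lower bound'' your oscillation iteration appeals to, and you yourself observe that the bookkeeping in that iteration degrades rather than improves. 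The paper's route (Caffarelli's argument for \cite[Corollary~2.3]{C3}, with the modulus $\vartheta$ of Lemma~\ref{lem: AMP} replacing the Alexandrov modulus, together with Lemma~\ref{lem: engulfing}) must bring the doubling of $f$ and $g$ to bear at exactly the step you left to convex geometry; an argument using only engulfing at the single base point plus balancing of centered sections cannot detect the degeneracy and will not close.
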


\begin{proof}[Proof of Theorem~\ref{thm: global Csigma}]
Let $y,z \in \overline{Y}$, and let $h > 0$ be such that $z \in \partial S^c_h(v,y)$.
By Corollary~\ref{cor: strict convexity}, for any $q \in \partial v(y)$,
\[
v(z) \geq v(y) + q \cdot (z - y) + c h.
\]
By compactness, $ S_1^c(v,y) \subset B_{1/\tau}(y)$ for some $\tau \in (0,1)$ depending only on the inner and outer diameters of $X$ and $Y$.
(See \cite{C2}.)

Applying Lemma~\ref{lem: engulfing} iteratively, we see that
\[
S_{t_0^j}^c(v,y) \subset \tfrac{1}{2^j}S_1^c(v,y) \text{ for all } j \in \N \text{ for some } t_0 \in (0,1).
\]
Let $k \in \N$ such that $t_0^{k+1} \leq h \leq t_0^k$.
Then 
\[
z \in S_{t_0^k}^c(v,y) \text{ and } |y-z| \leq \frac{1}{\tau 2^k}.
\]
In turn, for $M \geq \log t_0/\log (1/2)$, we deduce that
\[
v(z) \geq v(y) + q \cdot (z - y) + ct_0 \tau |y-z|^M.
\]
Therefore, as $v|_{\overline{Y}}$ agrees with the Legendre transform of $u$ in $Y$, $u \in C^{1 + \sigma}(\overline{X})$ for some $\sigma \in (0,1)$, as desired.
\end{proof}

Following the proof of \cite[Corollary~2.2]{C3}, we find a first volume product estimate.

\begin{corollary}
\label{cor: sections are ellipsoids}
Let $x \in \overline{X}$ and $S^c_h(x)$ be a centered section for $u$ based at $x$.
There is a constant $r >0$, depending on dimension and the doubling constants of $f$ and $g$, such that
\[
B_r(0) \subset \nabla \tilde{u}(\tilde{S}) \subset B_{1/r}(0).
\]
Consequently,
\[
r^nh^n \leq |S^c_h(x)||\nabla u(S^c_h(x))| \leq \frac{h^n}{r^n}.
\]
\end{corollary}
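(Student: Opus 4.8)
The plan is to follow Caffarelli's proof of \cite[Corollary~2.2]{C3}, now that the substitute Alexandrov maximum principle modulus $\vartheta$ of Lemma~\ref{lem: AMP} is available in place of the classical one. First I would fix $x \in \overline{X}$, a centered section $S := S^c_h(x)$ with defining plane $\ell$ of slope $q$, and pass to the normalized pair $(\tilde{u},\tilde{S})$: by John's lemma $B_1(0) \subset \tilde{S} \subset B_{n^{3/2}}(0)$, the origin is the center of mass of $\tilde{S}$, $\tilde{u} \equiv 0$ on $\partial \tilde{S}$, $\tilde{u}(0) = -1$, and $\osc_{\tilde{S}} \tilde{u} \le 1 + n^{3/2}$ by Lemma~\ref{lem: max of u and affine in centered sec}; moreover $0 \in \tilde{Y} \subset \spt \tilde{g}$ (the slope of $\ell$ lies in $Y$), and $\tilde{f},\tilde{g}$ are doubling with the constants of $f,g$. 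Since $\nabla \tilde{u}(\tilde{S}) = h^{-1} A^{-t}(\nabla u(S) - q)$ and $|S| \sim |\mathcal{E}|$, the volume of the John ellipsoid of $S$, a change of variables gives $|S^c_h(x)|\,|\nabla u(S^c_h(x))| \sim h^n\, |\nabla \tilde{u}(\tilde{S})|$, so everything reduces to the two inclusions $B_r(0) \subset \nabla \tilde{u}(\tilde{S}) \subset B_{1/r}(0)$.

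For the lower inclusion I would use a cone argument. Let $W$ be the convex function whose graph is the cone with apex $(0,-1)$ and base $\partial \tilde{S} \times \{0\}$; on each segment from $0$ to a boundary point, $W$ is the affine interpolation of the boundary values of $\tilde{u}$, so $\tilde{u} \le W$ on $\tilde{S}$, with equality at $0$ and on $\partial \tilde{S}$ and $\min W = -1$ attained at $0$. Since $0$ is the center of mass and $B_1(0) \subset \tilde{S} \subset B_{n^{3/2}}(0)$, every ray from $0$ meets $\partial\tilde{S}$ at distance in $[1,n^{3/2}]$, so a direct computation gives $B_{n^{-3/2}}(0) \subset \partial W(0)$. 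For each $p \in \partial W(0)$, the plane $P(y) := -1 + p\cdot y$ lies below $W$ with equality at $0$; hence $\tilde{u}-P$ vanishes at $0$ and is $\ge 0$ on $\partial\tilde{S}$, so its minimum over $\overline{\tilde{S}}$ is attained at an interior point $\bar{y}$, where $p \in \partial\tilde{u}(\bar{y})$. This gives $B_{n^{-3/2}}(0) \subset \nabla\tilde{u}(\tilde{S})$ using nothing but convexity.

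For the upper inclusion I would first observe that $\dist(\tfrac12\tilde{S},\partial\tilde{S}) \ge \tfrac12$ together with $\osc_{\tilde{S}}\tilde{u} \le 1+n^{3/2}$ yields, by a one-dimensional slope estimate, $\nabla\tilde{u}(\tfrac12\tilde{S}) \subset B_R(0)$ with $R = R(n)$. To promote this to all of $\tilde{S}$ I would invoke the doubling hypothesis: by mass balance (Lemma~\ref{lem: Brenier soln}) and the doubling of $\tilde{f}$ and $\tilde{g}$,
\[
\int_{\nabla\tilde{u}(\tilde{S})}\tilde{g} = \int_{\tilde{S}}\tilde{f} \le C\int_{\frac12\tilde{S}}\tilde{f} = C\int_{\nabla\tilde{u}(\frac12\tilde{S})}\tilde{g} \le C\int_{B_R(0)}\tilde{g} \le C^{k}\int_{B_{n^{-3/2}}(0)}\tilde{g}
\]
for a fixed $k = k(n,C)$. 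Combined with the lower inclusion $B_{n^{-3/2}}(0) \subset \nabla\tilde{u}(\tilde{S})$ and the fact (as in \cite{C3}) that the engulfing property of Lemma~\ref{lem: engulfing} makes $\nabla\tilde{u}(\tilde{S})$ comparable to an ellipsoid\,---\,equivalently, to a centered, hence convex, section of the Legendre transform $\tilde{v}$ of $\tilde{u}$\,---\,this forces $\nabla\tilde{u}(\tilde{S}) \subset B_{1/r}(0)$: otherwise, reaching a point at distance $\rho \gg R$ from $0$, one could pack $\sim\log\rho$ pairwise disjoint doubling-dilates inside it, each of $\tilde{g}$-mass $\ge C^{-1}\int_{B_{n^{-3/2}}(0)}\tilde{g}$, contradicting the display once $\rho$ is large. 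After shrinking $r$, the same constant serves in the lower inclusion, and the volume product estimate follows from the identity of the first paragraph.

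The main obstacle is the upper inclusion, specifically the passage from $\tfrac12\tilde{S}$ to $\tilde{S}$. This is the only step where the doubling hypothesis is genuinely needed\,---\,without it $\nabla\tilde{u}(\tilde{S})$ can be unbounded, as $\tilde{u}(y) = |y|^N - 1$ on $B_1(0)$ shows when $N \to \infty$\,---\,and it rests on identifying $\nabla\tilde{u}(\tilde{S})$ with a set comparable to a centered section of $\tilde{v}$ and then running the covering/iteration scheme of Lemma~\ref{lem: AMP} in the target. The lower inclusion, the reduction to the normalized pair, and the deduction of the volume product are all soft.
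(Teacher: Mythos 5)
Your reduction to the normalized pair, the cone argument giving $B_{n^{-3/2}}(0)\subset\partial\tilde{u}(\tilde{S})$, the slope bound $\partial\tilde{u}(\tfrac{1}{2}\tilde{S})\subset B_R(0)$, and the mass-balance/doubling chain are all correct, and they match the opening of the argument the paper has in mind (the paper itself only defers to \cite[Corollary~2.2]{C3}). The gap sits in the one step you yourself single out as the real one: passing from these facts to $\nabla\tilde{u}(\tilde{S})\subset B_{1/r}(0)$. You justify it by asserting, ``as in \cite{C3}'', that $\nabla\tilde{u}(\tilde{S})$ is comparable to a centered section of the Legendre transform $\tilde{v}$, hence to a convex set. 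Within this paper that comparability is precisely Corollary~\ref{cor: image of section is comp to dual section}, whose proof is given \emph{after} and \emph{by means of} Corollary~\ref{cor: sections are ellipsoids}, so quoting it here is circular; nor is it a soft convex-analysis fact that can be inserted for free. Dualizing your own example, one can produce a normalized centered section (of a convex function that is not an optimal transport potential between doubling measures) whose gradient image reaches distance $\sim N$ from the origin while every section of the dual of height $Ch$ at the dual slope stays bounded independently of $N$; the comparability therefore carries the full weight of the upper inclusion and has to be proved from the doubling structure, not cited.

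Moreover, the covering scheme you sketch does not close this on its own. The only convex subset of the image reaching a far point $P\in\partial\tilde{u}(z)$, $|P|=\rho$, that pure convexity provides is the subdifferential at the apex of the cone over $\tilde{S}$ with vertex $(z,\tilde{u}(z))$, namely $\conv\big(B_{c|\tilde{u}(z)|}(0)\cup\{P\}\big)$, exactly as in the proof of Lemma~\ref{lem: AMP}. Running the packing/doubling chain on this set yields $M\leq C^{k}$ with $M\sim\log\big(\rho/|\tilde{u}(z)|\big)$ but $k\sim\log\big(R/|\tilde{u}(z)|\big)$; when $z$ is close to $\partial\tilde{S}$ the depth $|\tilde{u}(z)|$ is small, $C^{k}$ overwhelms the logarithmic gain, and no uniform bound on $\rho$ follows. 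This is exactly why Lemma~\ref{lem: AMP} only produces the modulus $\vartheta(d)$ rather than a gradient bound. The proof the paper points to obtains the upper inclusion instead from the engulfing/strict-convexity theory (Lemma~\ref{lem: engulfing}, Corollary~\ref{cor: strict convexity}) applied to both $u$ and its Legendre transform $v$; that duality argument is the piece you would need to supply in place of the asserted comparability.
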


\begin{remark}
An implication of Corollary~\ref{cor: sections are ellipsoids} is that
\[
\vartheta(d) = C d^{\frac{1}{n}},
\]
for some $C > 0$ depending only on dimension and the doubling constants of $f$ and $g$, is a candidate modulus for Lemma~\ref{lem: AMP}.
Indeed, since $r^n \leq |\nabla \tilde{u}(\tilde{S})| \leq r^{-n}$, this follows from Alexandrov's maximum principle.
\end{remark}

From Corollary~\ref{cor: strict convexity}, we also deduce that centered sections and classical sections
\[
S_h(u,x,p) := \{ z \in \overline{X} : u(z) < u(x) +  p \cdot (z-x) + h \},
\]
where $p \in \partial u(x)$, are comparable (see \cite{CLW1} for a proof).
When $u$ is differentiable at $x$, the set $\partial u(x)$ is a singleton, and we write $S_h(u,x)$ rather than $S_h(u,x,\nabla u(x))$.
We often also suppress the dependence on $u$.

\begin{corollary}
\label{cor: equivalence of sections}
There are constants $c > 0$, depending only on dimension, and $C \geq 1$, depending only on dimension and the doubling constants of $f$ and $g$, such that
\[
S^c_{ch}(x) \cap \overline{X} \subset S_h(x) \subset S^c_{Ch}(x) \cap \overline{X}
\]
for all $x \in \overline{X}$.
\end{corollary}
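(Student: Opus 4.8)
The plan is to prove the two inclusions separately. Throughout, fix $p \in \partial u(x)$ and write $w(z) := u(z) - u(x) - p \cdot (z-x)$ for the associated nonnegative convex function, which vanishes at $x$ and satisfies $S_h(x) = \{ z \in \overline{X} : w(z) < h \}$. For a height $t > 0$, let $\ell_t$ be the affine function defining the centered section $S^c_t(x)$, so that $\ell_t(x) = u(x) + t$ and $S^c_t(x) = \{ u < \ell_t \}$, and set $\phi_t(z) := (\nabla \ell_t - p) \cdot (z - x)$, so that $\ell_t(z) - u(x) - p\cdot(z-x) = t + \phi_t(z)$.

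For the inclusion $S^c_{ch}(x) \cap \overline{X} \subset S_h(x)$, I would argue as follows. On $\partial S^c_{ch}(x)$ we have $u = \ell_{ch}$, while $u \ge u(x) + p\cdot(\cdot - x)$ everywhere; subtracting these gives $\phi_{ch} \ge -ch$ on $\partial S^c_{ch}(x)$, and hence, since $\phi_{ch}$ is affine and $S^c_{ch}(x)$ is a bounded convex body, on all of $S^c_{ch}(x)$. Because $\phi_{ch}$ vanishes at $x$, which is the center of mass of $S^c_{ch}(x)$, John's lemma (the inclusion $\mathcal{E} \subset S^c_{ch}(x) \subset n^{3/2}\mathcal{E}$ with $\mathcal{E}$ concentric, together with the symmetry of a linear functional about the center of $\mathcal{E}$) yields $\sup_{S^c_{ch}(x)} \phi_{ch} \le n^{3/2}\big( -\inf_{S^c_{ch}(x)} \phi_{ch}\big) \le n^{3/2}ch$. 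Therefore, for $z \in S^c_{ch}(x)$, $w(z) < \ell_{ch}(z) - u(x) - p\cdot(z-x) = ch + \phi_{ch}(z) \le (1+n^{3/2})ch$, and the purely dimensional choice $c := (1+n^{3/2})^{-1}$ forces $w(z) < h$; intersecting with $\overline{X}$ completes this direction.

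For the inclusion $S_h(x) \subset S^c_{Ch}(x) \cap \overline{X}$, the containment in $\overline{X}$ is built into the definition of $S_h(x)$, so I would suppose toward a contradiction that some $z \in S_h(x)$ lies outside $S^c_{Ch}(x)$. Since $x \in S^c_{Ch}(x)$ and $\overline{X}$ is convex, the segment $[x,z]$ lies in $\overline{X}$ and meets $\partial S^c_{Ch}(x)$ at a point $z_1 = x + s(z-x)$ with $0 < s \le 1$, so that $z_1 \in \partial S^c_{Ch}(x) \cap \overline{X}$. Corollary~\ref{cor: strict convexity}, applied at height $Ch$, then gives $w(z_1) \ge c_\star C h$, where $c_\star > 0$ is the constant there, depending only on dimension and the doubling constants of $f$ and $g$. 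On the other hand, convexity of $w$ along $[x,z]$ and $w(x) = 0$ give $w(z_1) \le s\, w(z) < s h \le h$. Choosing $C := 2/c_\star$ produces $2h \le w(z_1) < h$, which is absurd; hence $z \in S^c_{Ch}(x)$.

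I do not expect a serious obstacle here, as the statement is a soft consequence of Corollary~\ref{cor: strict convexity}; the point to be careful about is bookkeeping of constants. First, $c$ must depend on dimension alone, and this is automatic above because the first inclusion rests only on the subgradient inequality and John's lemma, which are features of the centered sections of an arbitrary convex function, whereas every use of the doubling hypothesis is quarantined in the second inclusion (entering solely through Corollary~\ref{cor: strict convexity}). Second, in the second inclusion one must know that the exit point $z_1$ of $[x,z]$ from $S^c_{Ch}(x)$ again lies in $\overline{X}$ — precisely where convexity of $\overline{X}$ together with $x, z \in \overline{X}$ is used — so that Corollary~\ref{cor: strict convexity} is applicable at $z_1$.
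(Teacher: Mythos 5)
Your proposal is correct and follows exactly the route the paper indicates (the paper omits the argument, citing \cite{CLW1} and noting the corollary is deduced from Corollary~\ref{cor: strict convexity}): your second inclusion is precisely that deduction, and your first inclusion is the standard balancing argument for centered sections via John's lemma, i.e., the same mechanism as Lemmas~\ref{lem: centered sections are balanced} and \ref{lem: max of u and affine in centered sec}. The constant bookkeeping is also right: $c=(1+n^{3/2})^{-1}$ is purely dimensional, while $C$ inherits the dependence on the doubling constants only through Corollary~\ref{cor: strict convexity}.
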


In addition, we find that the image of (centered) sections of $u$ of height $h$ are comparable to (centered) sections of $v$ height $h$.

\begin{corollary}
\label{cor: image of section is comp to dual section}
There are constants $c > 0$ and $C \geq 1$, depending only on dimension and the doubling constants of $f$ and $g$, such that
\[
S_{ch}(v,\nabla u(x)) \subset \nabla u(S_h(u,x)) \subset S_{Ch}(v,\nabla u(x))
\]
and
\[
S_{ch}^c(v,\nabla u(x)) \cap \overline{Y} \subset \nabla u(S_h^c(u,x)) \subset S_{Ch}^c(v,\nabla u(x)) \cap \overline{Y}
\]
for any $x \in \overline{X}$.
\end{corollary}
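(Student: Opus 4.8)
\emph{Proof proposal.}
The plan is to combine the Legendre duality between $u$ and $v$ with the ``symmetry of sections'' that the engulfing property provides. First, a reduction: $\nabla v$ is the optimal transport taking $g$ to $f$, and both $g$ and $f$ are doubling, so every result established so far in this section applies verbatim to $v$ on $\overline{Y}$. In particular $v \in C^{1+\sigma}(\overline{Y})$ and $v$ is strictly convex in $\overline{Y}$ (Corollary~\ref{cor: strict convexity}), sections of $v$ based in $\overline{Y}$ enjoy the engulfing property (Lemma~\ref{lem: engulfing}), and centered and classical sections of $v$ are comparable (Corollary~\ref{cor: equivalence of sections}). Since $\nabla u$ is continuous on the compact set $\overline{X}$, injective by strict convexity, and maps onto $\overline{Y}$, it is a homeomorphism $\overline{X} \to \overline{Y}$ with inverse $\nabla v$; moreover, for every $w \in \overline{X}$ the Fenchel equality $u(w) + v(\nabla u(w)) = \nabla u(w)\cdot w$ holds (there $u$ agrees with $u_0$, $v$ with the Legendre transform of $u_0$, and $\nabla u(w)\in\partial u_0(w)$).

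Next I record the duality identity for sections. Fix $x \in \overline{X}$ and put $y := \nabla u(x) \in \overline{Y}$, so $x = \nabla v(y)$. Let $z \in \overline{X}$ and $q := \nabla u(z)$, so $z = \nabla v(q)$. Expanding $v(y) = y\cdot x - u(x)$ and $v(q) = q\cdot z - u(z)$ gives
\[
u(z) - u(x) - y\cdot(z-x) = v(y) - v(q) - z\cdot(y - q) = v(y) - v(q) - \nabla v(q)\cdot(y-q).
\]
The left-hand side is $< h$ precisely when $z \in S_h(u,x)$, and the right-hand side is $< h$ precisely when $y \in S_h(v, q)$. Hence
\[
\nabla u\big(S_h(u,x)\big) = \big\{\, q \in \overline{Y} : y \in S_h(v, q) \,\big\}.
\]

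It therefore suffices to compare $\{\, q : y \in S_h(v,q)\,\}$ with the sections $S_{ch}(v,y)$ and $S_{Ch}(v,y)$. This is where the engulfing property enters: Lemma~\ref{lem: engulfing}, together with strict convexity (Corollary~\ref{cor: strict convexity}) and the equivalence of centered and classical sections (Corollary~\ref{cor: equivalence of sections}), yields a constant $K \geq 1$, depending only on $n$ and the doubling constants of $f$ and $g$, such that for all $a, b \in \overline{Y}$ and all $h > 0$,
\[
a \in S_h(v, b) \implies b \in S_{Kh}(v, a)
\]
(from $a \in S_h(v,b)$ the engulfing property gives $S_h(v,b) \subset S_{Kh}(v,a)$, and $b \in S_h(v,b)$). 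Applying this with $(a,b)=(y,q)$ shows $\{\, q : y \in S_h(v,q)\,\} \subset S_{Kh}(v,y)$, i.e. $\nabla u(S_h(u,x)) \subset S_{Kh}(v,\nabla u(x))$; applying it with $(a,b)=(q,y)$ and $c := 1/K$ shows $S_{ch}(v,y) \subset \{\, q : y \in S_h(v,q)\,\} = \nabla u(S_h(u,x))$. This gives the first displayed inclusion of the corollary with $C = K$ and $c = 1/K$, and the second (centered) inclusion follows by inserting Corollary~\ref{cor: equivalence of sections} at both ends and relabeling constants.

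The main obstacle is the symmetry statement for $v$-sections. Lemma~\ref{lem: engulfing} is phrased for \emph{centered} sections with the base point \emph{strictly inside}, whereas here I need engulfing for \emph{classical} sections valid up to $\partial\overline{Y}$; the passage uses strict convexity (Corollary~\ref{cor: strict convexity}) to place any base point a definite amount inside a slightly enlarged section, and Corollary~\ref{cor: equivalence of sections} to move between the two notions of section, following the Caffarelli--Guti\'errez scheme (as in \cite{CLW1}). Once this is in place, the rest is bookkeeping of constants.
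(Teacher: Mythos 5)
Your duality identity is correct and is a genuine reformulation: writing $y=\nabla u(x)$ and $q=\nabla u(z)$, the Fenchel equalities give $u(z)-u(x)-y\cdot(z-x)=v(y)-v(q)-\nabla v(q)\cdot(y-q)$, hence $\nabla u(S_h(u,x))=\{q\in\overline Y: y\in S_h(v,q)\}$. But precisely because of this identity, the statement you still need --- for all $a,b\in\overline Y$, $a\in S_h(v,b)\Rightarrow b\in S_{Kh}(v,a)$, together with its converse giving $c=1/K$ --- is not an auxiliary fact: it is exactly equivalent to the first pair of inclusions in the corollary. So the proposal reduces the corollary to an equivalent unproved statement, and everything hinges on the step you yourself flag as ``the main obstacle,'' namely engulfing/symmetry for classical sections of $v$ based at points of $\overline Y$, valid up to the boundary. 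That step does not follow from Lemma~\ref{lem: engulfing} as stated: that lemma concerns centered sections and requires the second base point to lie in a strict dilate $\underline{t}\, S^c_h(x)$, whereas from $a\in S_h(v,b)$ Corollary~\ref{cor: equivalence of sections} only places $a$ in $S^c_{Ch}(v,b)\cap\overline Y$, possibly arbitrarily close to its boundary. Pushing $a$ into a fixed sub-dilate of a comparably sized centered section requires a quantitative expansion property of centered sections in $h$ (in effect rerunning Caffarelli's arguments with the modulus of Lemma~\ref{lem: AMP}); Corollary~\ref{cor: strict convexity} bounds the tangent-plane excess from below only \emph{on} $\partial S^c_{h'}(v,b)\cap\overline Y$, and convexity of the excess along rays from the center yields inequalities in the wrong direction, so the one-sentence sketch does not close this gap.

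The paper avoids this circle entirely: after normalizing $u(0)=0$, $\nabla u(0)=0$, $u\geq 0$ (so also $v\geq 0=v(0)$, $\nabla v(0)=0$), it writes $v(\nabla u(x))=\nabla u(x)\cdot x-u(x)$ and bounds this by $Ch$ for $x\in S^c_h(u,0)\cap\overline X$, using the gradient bound $|\nabla\tilde u|\leq 1/r$ for the normalized pair (Corollary~\ref{cor: sections are ellipsoids}) together with Lemma~\ref{lem: max of u and affine in centered sec}; since $S_{Ch}(v,0)=\{v<Ch\}\cap\overline Y$, this is exactly the inclusion $\nabla u(S^c_h(u,0))\subset S_{Ch}(v,0)$, the classical-section version follows from Corollary~\ref{cor: equivalence of sections}, and the opposite inclusion follows by swapping the roles of $u$ and $v$. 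That quantitative ingredient (Corollary~\ref{cor: sections are ellipsoids}) is entirely absent from your argument; to repair the proposal you would either import it as the paper does, or independently prove the boundary engulfing property for classical sections (as in \cite{CLW1}), which is a substantial argument rather than bookkeeping.
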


\begin{proof}
Up to a translation, we assume that $x = 0$.
Furthermore, up to subtracting off the tangent plane to $u$ at $0$, we assume that $u(0) = 0$ and $u \geq 0$. 

We start with the second inclusion.
Since $v$ in $\overline{Y}$ agrees with the Legendre transform of $u$ and $\nabla u(\overline{X}) = \overline{Y}$, $\nabla v(\nabla u(x)) = x$ for all $x \in \overline{X}$.
In particular, $\nabla v(\nabla u(0)) = 0$.
Moreover, $v(0) = 0$ and $v \geq 0$.
Considering Corollary~\ref{cor: equivalence of sections} then, it suffices to show that $v(\nabla u(x)) < Ch$ for all $x \in S^c_h(u,0) \cap \overline{X}$, which follows from Corollary~\ref{cor: sections are ellipsoids}.
Indeed, letting $A$ be the John transformation that normalizes $S^c_h(u,0)$, observe that
\[
v(\nabla u(x)) =  \nabla u(x) \cdot x  - u(x) = h \nabla \tilde{u}(Ax) \cdot Ax  - u(x) + \ell(x) < Ch,
\]
as desired.
(Recall Lemma~\ref{lem: max of u and affine in centered sec}.)

The first inclusion now follows from symmetry and duality.
Specifically, reversing the roles of $u$ and $v$ in the second inclusion and applying $\nabla u$, we see that
\[
S_h(v,0) = \nabla u( \nabla v(S_h(v,0))) \subset \nabla u( S_{Ch}(u,0) ).
\]
Replacing $h$ by $C^{-1}h$ concludes the proof.
\end{proof}

Finally, again following the arguments of \cite[Section~3]{C3}, we obtain a uniform density estimate on centered sections as well as a second volume product estimate, this time on (centered) sections and their images when $X$ is polynomially convex.
For completeness, let us recall the definition of polynomially convex and an important remark, both taken directly from \cite{C3}, which will be used in the proof of Theorem~\ref{thm: global C2 in plane}.

\begin{definition}
A domain $X$ is polynomially convex at the origin provided $0 \in \partial X = \{ x_n = \Gamma_X(x') \}$ (up to a rotation) and two constants $0 < \kappa_1, \kappa_2 < 1$ exist such that
\[
\Gamma_X(x') \leq \frac{x' \cdot z'}{|z'|^2} \Gamma_X(z')
\]
whenever $|x'|,|z'| \leq \delta$ and $x'$ lies in the truncated cone
\[
\begin{cases}
|x'| < \kappa_1|z'| \\
|\sin \angle(x',z')| \leq \kappa_2.
\end{cases} 
\]
Here $\angle(x',z')$ denotes the angle between $x'$ and $z'$.
A domain is polynomially convex if it is polynomially convex at every point on its boundary.
\end{definition}

\begin{remark}
In two dimensions, every convex domain is polynomially convex.
In every dimension, given a polynomially convex domain, the constants $\kappa_1$ and $\kappa_2$ can be chosen uniformly for all points on its boundary depending only on the inner and outer diameters of the domain. (See \cite[Section 3, Remark 2 and Lemma 3.1]{C3}.)
\end{remark}

\begin{proposition}
\label{prop: uni density}
Let $X$ be polynomially convex. 
Then there are constants $C \geq 1$ and $c > 0$, depending on the inner and outer diameters of $X$ and $Y$, dimension, and the doubling constants of $f$ and $g$, such that
\[
C\frac{|S^c_h(x) \cap X|^{\frac{1}{n}}}{|S^c_h(x)|^{\frac{1}{n}}} \geq \frac{\diam(S^c_h(x) \cap X)}{\diam(S^c_h(x))} \geq c
\]
for any $x \in \partial X$.
\end{proposition}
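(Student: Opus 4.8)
The plan is to follow the arguments of \cite[Section~3]{C3}, with Lemma~\ref{lem: AMP} playing the role of the classical Alexandrov maximum principle and Corollaries~\ref{cor: strict convexity}, \ref{cor: sections are ellipsoids}, \ref{cor: equivalence of sections}, and \ref{cor: image of section is comp to dual section} playing the roles of their nondegenerate counterparts; by the Remark after Corollary~\ref{cor: sections are ellipsoids} the modulus $\vartheta$ of Lemma~\ref{lem: AMP} may be taken to be $Cd^{1/n}$, so this substitution is harmless. First I would fix $x\in\partial X$ and normalize: after a translation, after subtracting the (affine, hence harmless) tangent plane of $u$ at $x$, and after a rotation, we may assume $x=0$, $u(0)=0=\nabla u(0)$, $u\ge 0$, $0\in\partial Y$, and that near $0$ one has $\partial X=\{x_n=\Gamma_X(x')\}$ with $\Gamma_X(0)=0$ and $\Gamma_X\ge 0$, the polynomial convexity inequality holding with constants $\kappa_1,\kappa_2$ that are uniform on $\partial X$ by the Remark above. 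Write $S:=S^c_h(0)$, let $A$ be the linear normalization of $S$ (so $B_1(0)\subset\tilde S:=A(S)\subset B_{n^{3/2}}(0)$), and set $\tilde X:=A(X)$. I claim it suffices to prove the single bound $|\tilde S\cap\tilde X|\ge c$, equivalently $|S\cap X|\ge c|S|$. Indeed, the volume ratio in the statement is invariant under $A$, so once $|S\cap X|\ge c|S|$ is known the left inequality holds with $C=c^{-1/n}$ (its right-hand side being $\le 1$); and the right inequality follows as well, since comparing $S$ and the convex body $S\cap X\subset S$ with the John ellipsoid of $S$ shows that a convex subset of $S$ of volume $\ge c|S|$ must have width $\gtrsim$ that of $S$ along the longest John axis, hence diameter $\gtrsim\diam S$.

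The heart of the matter is the bound $|S\cap X|\ge c|S|$, i.e.\ that $\tilde X$ occupies a definite fraction of $\tilde S$. The idea, as in \cite[Section~3]{C3}, is to produce a convex ``test region'' $\Lambda\subset\tilde S\cap\tilde X$ with $|\Lambda|\ge c$: I would use the polynomial convexity of $X$ to find, near $0$, a cone-like subregion of $X$, transport it through $A$, and then use that $S$, being a centered section, is balanced (Lemma~\ref{lem: centered sections are balanced}) together with the control on $\tilde u$ coming from Lemma~\ref{lem: AMP} and Corollary~\ref{cor: strict convexity} to see that $\Lambda$ captures a fixed portion of $\tilde S$, with constants depending only on $\kappa_1,\kappa_2$, the inner and outer diameters, and the doubling constants. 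Mass balance (Lemma~\ref{lem: Brenier soln}), the fact that $f$ is concentrated on $\overline X$, Lemma~\ref{lem: max of u and affine in centered sec}, Corollary~\ref{cor: equivalence of sections}, and the engulfing Lemma~\ref{lem: engulfing} enter as auxiliary tools: the first three give $\int_{S\cap X}f=\int_S f$ and $u\le Ch$ on $S\cap\overline X$, hence the trapping $S_{c_0h}(u,0)\subset S\cap\overline X\subset S_{Ch}(u,0)$ between comparable classical sections, while Corollaries~\ref{cor: sections are ellipsoids} and \ref{cor: image of section is comp to dual section} together with the doubling of $g$ identify the $f$-masses of $S$, $S\cap X$, and $S_{c_0h}(u,0)$ up to constants; these identities are what let one convert the geometric statement ``$\Lambda$ carries a fixed fraction of the $f$-mass of $S$'' into the Lebesgue statement $|S\cap X|\ge c|S|$.

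I expect the construction of $\Lambda$ to be the main obstacle, in particular the requirement that polynomial convexity of $X$ survive the linear normalization $A$ \emph{uniformly}: one must control how $A$ distorts the direction normal to $\partial X$ at $0$ and verify that the resulting test region inside $\tilde S\cap\tilde X$ has volume bounded below by a constant depending only on $n$, the doubling constants of $f$ and $g$, and the inner and outer diameters of $X$ and $Y$. Running the same analysis on the dual side, via Corollary~\ref{cor: image of section is comp to dual section}, then yields the companion volume product estimate for centered sections and their images mentioned before the statement.
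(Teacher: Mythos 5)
Your reduction of the proposition to the single volume estimate $|S\cap X|\ge c\,|S|$ is legitimate: the first inequality then holds trivially (the diameter ratio is $\le 1$), and your John-ellipsoid projection argument correctly shows that any subset of $S$ of Lebesgue-volume fraction $\ge c$ must have extent comparable to the longest John axis, hence diameter comparable to $\diam S$. The problem is that you never actually prove the volume estimate, and the mechanism you sketch for it cannot work. First, polynomial convexity is the wrong tool for producing a ``cone-like'' region of definite volume inside $S\cap X$: it is an upper-bound-type control on how $\Gamma_X$ bends, whose only role (as in Caffarelli's Lemma~3.2 of \cite{C3}, and as the Remark preceding the proposition states) is to rule out thin slivers, i.e.\ to give the \emph{first} inequality, a pure convex-geometry fact for ellipsoids centered at boundary points. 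It gives no lower bound whatsoever on $|S\cap X|/|S|$, because nothing you have used so far prevents the centered section $S$ from sticking almost entirely outside $X$; excluding that scenario is precisely the hard content of the proposition. Second, your proposed conversion from ``$\Lambda$ carries a fixed fraction of the $f$-mass of $S$'' to ``$|S\cap X|\ge c|S|$'' is vacuous: since $f$ is concentrated on $\overline X$, one has $\int_{S\cap X}f=\int_S f$ automatically, so no amount of $f$-mass bookkeeping (mass balance, doubling, section duality) can by itself compare $|S\cap X|$ with $|S|$ in Lebesgue measure.

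The missing idea is the paper's contradiction argument for the \emph{second} inequality, which is where the transport structure enters. Assume $\delta:=\diam(S\cap X)/\diam(S)$ is small. Let $\mathcal{E}$ be the John ellipsoid of $S=S^c_h(0)$ with longest principal radius $r_1$ in direction $\e_1$; by Corollary~\ref{cor: sections are ellipsoids}, $\nabla u(S)$ is comparable to the dual ellipsoid with radius $h/r_1$ in direction $\e_1$. The extreme points $x_\pm\in\partial S$ with normal $\pm\e_1$ satisfy $|x_\pm|\sim r_1$, so for $\delta$ small they lie outside $X$, hence $y_\pm=\nabla u(x_\pm)=y\pm t_\pm\e_1\in\partial Y$ with $t_\pm\sim h/r_1$, where $y=\nabla\ell$. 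Using that $\partial Y$ is locally a Lipschitz graph in a direction $\nu$, together with the dual ellipsoid bound, one finds $y_2=y+t_2\nu\in Y\cap\partial(\nabla u(S))$ with $t_2\le Ch/r_1$; then $x_2=(\nabla u)^{-1}(y_2)\in\partial S\cap X$ and, by convexity and Lemma~\ref{lem: max of u and affine in centered sec}, $|x_2|\,|\nabla(\ell-u)(x_2)|\ge(\ell-u)(0)=h$, forcing $|x_2|\ge r_1/C\sim\diam(S)/C$, contradicting $\delta$ small (since $x_2\in S\cap X$ and $0\in\partial X\cap \overline S$). Only after this diameter bound does one invoke polynomial convexity, via the first inequality, to obtain the volume bound $|S\cap X|\ge c|S|$ --- i.e.\ the logical order is the reverse of the one you propose, and the dual-section/Lipschitz-boundary argument you omit is the heart of the proof.
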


For the convenience of the reader, we sketch the proof of this proposition.
But first we make a remark.

\begin{remark}
The polynomial convexity of $X$ only plays a role in proving the first inequality, between normalized volume and normalized diameter, but for ellipsoids centered at points in $\partial X$ rather than centered sections.
This inequality is one about convex sets, and nothing more.
In terms of the geometry of $X$ and $Y$, the remainder of the proof uses only that $X$ and $Y$ are convex, and have boundaries that can be locally written as graphs of a Lipschitz functions.
\end{remark}

We write $A \sim B$ if $cB \leq A \leq CB$ for some $c > 0$ and $C > 0$.

\begin{proof}[Sketch of Proof]
Let $0 \in \partial X$ and consider $S = S_h^c(0)$.
The first inequality is a consequence of \cite[Lemma~3.2]{C3} and the comparability of $S$ to an ellipsoid $\mathcal{E}$ centered at $0$; and so, it suffices to show that the normalized diameter of $S$, 
\[
\delta := \frac{\diam (S \cap X)}{\diam (S)},
\]
cannot be too small.

From Corollary~\ref{cor: sections are ellipsoids}, if $r_i$  is the principle radius of $\mathcal{E}$ in the $\e_i$ direction, then $\nabla u(S)$ is comparable to an ellipsoid $\mathcal{E}^\ast$ that has principle radius $h/r_i$ in the direction $\e_i$.
Let $y = \nabla \ell \in Y$, where $\ell$ defines $S$.
Up to a rotation, we assume that $r_1 \geq r_i$ for all $i \neq 1$.
Let $x_\pm \in \partial S$ be such that $\nu_{\partial S}(x_+) = \e_1$ and $\nu_{\partial S}(x_-) = -\e_1$.
In particular, $\nabla (\ell - u)$ at $x_\pm$ is parallel to $\e_1$.
Since $|x_\pm| \sim r_1$, $x_\pm \notin X$ if $\delta > 0$ is sufficiently small.
Hence, $y_\pm = \nabla u (x_\pm) = y \pm t_\pm \e_1 \in \partial Y$ with $t_\pm \sim h/r_1$.

Now $\partial Y$ is locally the graph of a Lipschitz function in the direction $\nu$.
Let $t_1 > 0$ be the largest constant for which $y_1 := y - t_1\nu \in \nabla u(S)$.
Thus, $y_1 \in \partial Y$, as $\nabla u(S) \subset \overline{Y}$.
Since $y_\pm \in \partial Y$ and $\partial Y$ is Lipschitz function in the direction $\nu$, we find the inequality $t_1 \leq Ch/r_1$.
Let $t_2 > 0$ be such that $y_2 := y + t_2\nu \in Y \cap \partial (\nabla u(S))$.
By Corollary~\ref{cor: sections are ellipsoids}, it follows that $t_2 \leq Ch/r_1$.
Thus, if $x_2 = (\nabla u)^{-1}(y_2)$, then $x_2 \in \partial S \cap X$. 
Moreover, by convexity and Lemma~\ref{lem: max of u and affine in centered sec},
\[
|x_2||y - y_2|  = |x_2||\nabla (\ell - u)(x_2)| \geq (\ell - u)(0) = h.
\]
In turn, $|x_2| \geq r_1/C$.
But this contradicts $\delta > 0$ being arbitrarily small.
\end{proof}

\begin{corollary}
\label{cor: volume product}
Let $X$ be polynomially convex.
Then constants $C \geq 1$ and $c> 0$ exist, depending on the inner and outer diameters of $X$ and $Y$, dimension, and the doubling constants of $f$ and $g$, for which
\[
c h^n \leq |S^c_h(x) \cap X||\nabla u(S^c_h(x))| \sim |S_h(x)||\nabla u(S_h(x))| \leq Ch^n
\]
for any $x \in \partial X$.
\end{corollary}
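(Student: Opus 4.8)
The plan is to obtain the estimate by assembling the results established above, following \cite[Section~3]{C3}. Throughout, fix $x \in \partial X$, write $S = S^c_h(x)$ for a centered section of $u$ at $x$, and recall that, by Corollary~\ref{cor: sections are ellipsoids}, $|S^c_t(x)|\,|\nabla u(S^c_t(x))| \sim t^n$ for every height $t$; combined with the engulfing property of Lemma~\ref{lem: engulfing}, this shows that centered sections (at a fixed point) of comparable heights have comparable volumes and comparable gradient images, a fact I will use repeatedly.

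First I would treat the two outer inequalities for the centered section $S$. Since $x \in \partial X$, Proposition~\ref{prop: uni density} gives $C\,|S \cap X|^{1/n}/|S|^{1/n} \ge \diam(S \cap X)/\diam(S) \ge c$, whence $|S \cap X| \sim |S|$ (the reverse bound $|S\cap X| \le |S|$ being trivial). Multiplying by $|\nabla u(S)|$ and invoking Corollary~\ref{cor: sections are ellipsoids} yields $c h^n \le |S \cap X|\,|\nabla u(S)| \le Ch^n$.

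It remains to compare $|S \cap X|\,|\nabla u(S)|$ with $|S_h(x)|\,|\nabla u(S_h(x))|$, for which it suffices to show $|S_h(x)| \sim |S\cap X|$ and $|\nabla u(S_h(x))| \sim |\nabla u(S)|$. By Corollary~\ref{cor: equivalence of sections}, $S^c_{ch}(x) \cap \overline{X} \subset S_h(x) \subset S^c_{Ch}(x) \cap \overline{X}$; intersecting with $X$ and applying the density estimate of the previous step at the heights $ch$ and $Ch$ gives $|S_h(x)| \sim |S^c_h(x) \cap X| = |S \cap X|$. For the gradient images, the upper bound $|\nabla u(S_h(x))| \le |\nabla u(S^c_{Ch}(x))| \sim |\nabla u(S)|$ is immediate from monotonicity in the height. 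For the lower bound I would use that $u$ is the minimal convex extension of $u_0$ outside $X$, which forces $\partial u(\R^n) \subset \overline{Y}$: since $f$ is concentrated on $X$, the mass-balance identity of Lemma~\ref{lem: Brenier soln} applied to $S^c_{ch}(x) \cap \overline{X}$ and to $S^c_{ch}(x)$ shows that $\nabla u(S^c_{ch}(x) \cap \overline{X})$ and $\nabla u(S^c_{ch}(x))$ carry the same $g$-mass, hence coincide up to a null set (as $g>0$ a.e.\ on $Y$ and $|\partial Y|=0$). With $S^c_{ch}(x)\cap\overline{X}\subset S_h(x)$ this gives $|\nabla u(S_h(x))| \ge |\nabla u(S^c_{ch}(x)\cap\overline{X})| = |\nabla u(S^c_{ch}(x))| \sim |\nabla u(S)|$, and the two displayed products are therefore comparable; alternatively one can pass to the dual potential $v$ and invoke Corollary~\ref{cor: image of section is comp to dual section}.

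The main obstacle is essentially bookkeeping — threading the single boundary point $x$ through Corollaries~\ref{cor: sections are ellipsoids}, \ref{cor: equivalence of sections}, and \ref{cor: image of section is comp to dual section} and Proposition~\ref{prop: uni density} while controlling how the structural constants degrade at each comparable change of height — together with the one genuinely delicate point, namely verifying that the part of a centered section lying outside $\overline{X}$ contributes nothing to the gradient image. This is exactly where minimality of the extension of $u$ enters, and it is the step I would expect to require the most care to state cleanly.
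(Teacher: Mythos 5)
Your assembly is essentially the argument the paper intends: the corollary is stated without proof as the outcome of combining Proposition~\ref{prop: uni density} with Corollaries~\ref{cor: sections are ellipsoids}, \ref{cor: equivalence of sections}, and \ref{cor: image of section is comp to dual section}, which is exactly what you do, and you correctly isolate the one delicate point, namely that the part of $S^c_h(x)$ lying outside $\overline{X}$ must contribute nothing (in measure) to the gradient image. Two small remarks: your mass-balance step via Lemma~\ref{lem: Brenier soln} upgrades a $g$-null difference set to a Lebesgue-null set only because $g>0$ a.e.\ in $Y$, which holds for the densities $b\,d_{\partial Y}^{\beta}$ to which the corollary is applied but is not a formal consequence of the doubling hypothesis alone, so it should be stated explicitly (and the duality alternative you mention has the symmetric defect that it requires section estimates for $v$ at $\partial Y$, i.e.\ information on $Y$ not assumed here); and the comparable-heights machinery via Lemma~\ref{lem: engulfing} is not actually needed, since once you show $c h^n \leq |S^c_{h}(x)\cap X|\,|\nabla u(S^c_{h}(x))| \leq Ch^n$ and, using only the inclusions $S^c_{ch}(x)\cap\overline{X}\subset S_h(x)\subset S^c_{Ch}(x)\cap\overline{X}$ together with the density estimate and the null-image observation at the heights $ch$ and $Ch$, that $c h^n \leq |S_h(x)|\,|\nabla u(S_h(x))| \leq Ch^n$, the middle comparability is automatic.
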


\section{The Flat Case}
\label{sec: The Flat Case}

Let $u$ be convex and such that 
\begin{equation}
\label{eqn: global OT flat}
(\nabla u)_{\#} d_{\partial \{x_n > 0\}}^\alpha =  d_{\partial \{y_n > 0\}}^\beta,
\end{equation}
for two nonnegative constants $\alpha$ and $\beta$.
Then, by the arguments of Section~\ref{sec: Caff Bdry Reg Revisit} and classical regularity theory for the Monge--Amp\`{e}re equation, we find that $u$ is strictly convex in $\{x_n \geq 0\}$, $u \in C^{1,\sigma}_{\rm loc}(\{x_n \geq 0 \}) \cap C^\infty_{\rm loc}(\{x_n > 0 \})$, $u_n \geq 0$, and solves 
\begin{equation}
\label{eqn: global MA flat}
\det D^2 u = \frac{x_n^\alpha}{u_n^\beta} \text{ in } \{ x_n > 0 \} \text{ and } u_n = 0 \text{ on } \{x_n = 0\}.
\end{equation}

From this point forward, in this section, we assume that $\max\{ \alpha, \beta\} > 0$.
If $\alpha = \beta = 0$, then $u$ must be a quadratic polynomial by the classical Liouville theorem for the Monge--Amp\`{e}re equation; indeed, its even reflection over the set $\{ x_n = 0\}$ solves $\det D^2 u = 1$ in $\R^n$.

First, we prove a Pogorelov estimate in $x'$, which holds up to $\{ x_n = 0 \}$. 

\begin{proposition}
\label{prop: pogo tang}
Let $u$ be convex and satisfy $u_n \geq 0$ in $\{x_n > 0\}$ and \eqref{eqn: global MA flat}.
Let $x_0 \in \{x_n = 0\}$ and $\ell_{x_0}$ be the tangent plane to $u$ at $x_0$.
For any tangential direction $\e$, i.e., such that $\e \cdot \e_n = 0$,
\begin{equation}
\label{eqn: tang C11}
u_{\e\e}|u - \ell_{x_0}-h| \leq C(n+\beta, \|\partial_\e u - \partial_\e\ell_{x_0} \|_{L^\infty(S_h(x_0))}).
\end{equation}
\end{proposition}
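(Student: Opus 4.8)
The plan is to run Pogorelov's classical tangential second-derivative estimate, adapted so as to (i) absorb the nonlinearity $u_n^{-\beta}$ and (ii) survive up to the flat face $\{x_n=0\}$. After a translation and subtracting off the tangent plane, normalize so that $x_0=0$ and $\ell_{x_0}\equiv 0$; then $u\ge 0$, $u(0)=0$, $\nabla u(0)=0$, $u_n\equiv 0$ on $\{x_n=0\}$, and $S:=S_h(0)=\{x_n\ge 0\}\cap\{u<h\}$. Writing $v:=u_{\e\e}\ge 0$ and $M:=\|u_\e\|_{L^\infty(S)}$, consider
\[
g:=(h-u)\,v\,e^{\frac12 u_\e^2}\qquad\text{on }\overline{S}.
\]
Since $1\le e^{\frac12 u_\e^2}\le e^{M^2/2}$ and $h-u=|u-\ell_{x_0}-h|$ on $S$, it suffices to bound $\max_{\overline S}g$ by $C(n+\beta,M)$. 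On $\{u=h\}\subset\partial S$ we have $g=0$, so the maximum is attained either at an interior point of $S\cap\{x_n>0\}$, where $u$ is smooth, or on $\{x_n=0\}$.

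\emph{Interior maximum.} Suppose $g$ attains its maximum at $z_0\in\{x_n>0\}$. From $\nabla\log g(z_0)=0$ and $u^{ij}\partial_{ij}\log g(z_0)\le 0$, together with $u^{ij}u_{ij}=n$, the identity $u^{ij}u_{\e ij}=\partial_\e\log\det D^2u=-\beta u_{n\e}/u_n$ (the $\alpha\log x_n$ term drops since $\e\cdot\e_n=0$), and the linearized equation
\[
u^{ij}\partial_{ij}v=u^{ik}u^{jl}u_{ij\e}u_{kl\e}+\beta\frac{u_{n\e}^2}{u_n^2}-\beta\frac{v_n}{u_n},
\]
one runs the usual chain of inequalities. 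The crucial point is the bookkeeping of the $\beta$-terms: the first-order identity $v_n/v=u_n/(h-u)-u_\e u_{n\e}$ converts $-\beta v_n/(v u_n)$ into $-\beta/(h-u)+\beta u_\e u_{n\e}/u_n$, whose last summand cancels the contribution $-\beta u_\e u_{n\e}/u_n$ coming from the term $u_\e\,u^{ij}u_{\e ij}$; what survives is $-\beta/(h-u)$, plus the harmless nonnegative $\beta u_{n\e}^2/(vu_n^2)$. This merges with $-u^{ij}u_{ij}/(h-u)=-n/(h-u)$ into $-(n+\beta)/(h-u)$, so the computation reduces \emph{verbatim} to Pogorelov's estimate for $\det D^2u$ constant with $n$ replaced by $n+\beta$, and $g(z_0)\le C(n+\beta)(1+M^2)\le C(n+\beta,M)$.

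\emph{Boundary maximum.} Suppose the maximum is at $z_0\in\{x_n=0\}\cap S$. There $u_n(z_0)=0$, and since $u_n\equiv 0$ along the tangential hyperplane $\{x_n=0\}$ with $\e$ tangential, also $u_{n\e}(z_0)=\partial_\e u_n(z_0)=0$; hence $\partial_n g(z_0)=(h-u)e^{\frac12 u_\e^2}\,u_{\e\e n}(z_0)$. Maximality over $\overline S$, whose inner normal at $z_0$ is $\e_n$, forces $\partial_n g(z_0)\le 0$, i.e.\ $u_{\e\e n}(z_0)\le 0$; had this been $>0$, $g$ would increase into $\{x_n>0\}$ and the maximum would be interior. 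Working on the slices $S\cap\{x_n>\varepsilon\}$ and letting $\varepsilon\to 0$ — which legitimizes the second-order computation and on which the bad term $-\beta v_n/u_n$ now carries the favorable sign near $z_0$ — the tangential condition $D^2_{x'}g(z_0)\le 0$ contracted against the positive semidefinite tangential block of $(u^{ij})$, together with $u_n=u_{n\e}=0$ on $\{x_n=0\}$, reproduces the same chain of inequalities and yields $g(z_0)\le C(n+\beta,M)$ once more.

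I expect this last, flat-boundary, case to be the main obstacle: one must legitimize the auxiliary function's second-order information up to $\{x_n=0\}$ despite $u$ being only $C^{1,\sigma}$ there (indeed $u$ is \emph{not} $C^2$ up to the boundary when $\gamma<1$, which is precisely why only the \emph{tangential} second derivative can be controlled), and one must check that the vanishing of $u_n$ and $u_{n\e}$ on $\{x_n=0\}$ together with the sign $u_{\e\e n}(z_0)\le 0$ at a boundary maximum genuinely neutralizes the singular term $-\beta v_n/u_n$. The interior computation, by contrast, is a delicate but routine variant of Pogorelov's argument whose only new feature is the cancellation that turns the $u_n^{-\beta}$ nonlinearity into the harmless shift $n\mapsto n+\beta$.
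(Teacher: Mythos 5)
Your test function and your interior computation are essentially the paper's: $(h-u)\,u_{\e\e}\,e^{u_\e^2/2}$ is the exponential of the quantity $\log|\cdot| + \log(\cdot)_{\e\e} + \tfrac12(\cdot)_\e^2$ used there, and the cancellation that converts the $\beta$-drift into the harmless shift $n \mapsto n+\beta$ is exactly the one carried out in the paper (including, implicitly, the shearing needed to diagonalize the Hessian at an interior maximum). The genuine gap is the boundary case, which is the heart of the estimate. At $z_0 \in \{x_n = 0\}$ you invoke $u_{n\e}(z_0) = 0$, the sign of $u_{\e\e n}(z_0)$, and $D^2_{x'}g(z_0) \leq 0$, i.e.\ second \emph{and third} derivatives of $u$ at the degenerate boundary; but $u$ is only known to be $C^{1,\sigma}$ up to $\{x_n=0\}$ (and is genuinely not $C^2$ there when $\gamma < 1$), so none of these objects is defined. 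The proposed repair\,---\,maximizing over the slices $S \cap \{x_n > \varepsilon\}$ and letting $\varepsilon \to 0$\,---\,does not close the gap: the maximum over the truncated region may sit on the artificial boundary $\{x_n = \varepsilon\}$, where no first- or second-order conditions are available, and the asserted favorable sign of $v_n = u_{\e\e n}$ near $\{x_n = 0\}$ is precisely the kind of third-order information one does not possess. In addition, contracting only the tangential block of $(u^{ij})$ against $D^2_{x'}g(z_0) \leq 0$ does not reproduce the Pogorelov chain, which uses the full trace $u^{ij}\partial_{ij}$ (for instance to generate the term $n/(h-u)$ through $u^{ij}u_{ij} = n$).

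The paper resolves exactly this difficulty by regularizing before running Pogorelov: it replaces the data by $(x_n+\epsilon)^\alpha$ and $(y_n+\epsilon)^\beta$ on suitable domains, takes the corresponding optimal transport potential $\psi$, and observes that the even reflection $\bar\psi$ across $\{x_n=0\}$ solves an interior problem, so Caffarelli's interior theory gives $D^2\psi$ H\"older continuous up to $\{x_n=0\}$; differentiating the equation and the Neumann condition $\psi_n = 0$ tangentially then shows that the third and fourth derivatives appearing in the computation are continuous across $\{x_n=0\}$, so the Pogorelov quantity is $C^2(\overline S)$. The regularization also replaces the singular drift by the bounded $\beta/(\psi_n+\epsilon)$, and a maximum on $\{x_n=0\}$ becomes an interior maximum of the reflected quantity, with $\psi_{ni} = 0$ there obtained by differentiating the boundary condition. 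The estimate for $u$ then follows from the locally uniform convergence $\nabla\psi \to \nabla u$ guaranteed by Theorem~\ref{thm: global Csigma}. Without some approximation of this kind (or another mechanism giving second-order information at $\{x_n=0\}$), your flat-boundary case is not established, and so the proposal as written does not prove the proposition.
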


\begin{proof}
Up to a translation and subtracting off the tangent plane to $u$ at $x_0$, we assume that $x_0 = 0$ and $u(0) = \nabla u(0) = 0$.
Now let $\epsilon > 0$, $\Upsilon := B \cap \{ y_n > 0 \}$, for some large ball $B$ centered at the origin, and $\Omega$ be a dilation of $(\nabla u)^{-1}(\Upsilon)$ such that
\[
\int_\Omega (x_n + \epsilon)^\alpha = \int_\Upsilon (y_n + \epsilon)^\beta.
\]
Furthermore, let $\nabla \psi$ be the optimal transport taking $f = (x_n + \epsilon)^\alpha \mres \Omega$ to $g = (y_n + \epsilon)^\beta \mres \Upsilon$.
Note that the even reflection in $x_n$ of $\psi$, call it $\bar{\psi}$, is a potential whose gradient is the optimal transport taking the even reflection of $f$ to the even reflection of $g$, in $x_n$ and $y_n$ respectively.
So points along $\{x_n = 0\}$ can be turned into interior points.
By \cite{C1} and symmetry, $D^2\bar{\psi}$ is locally H\"{o}lder continuous in $\bar{\Omega}$, the reflection of $\Omega$ over the $x_n$-axis, $\nabla \bar{\psi}(\Omega) \subset \{y_n > 0 \}$, and $\bar{\psi}_n = 0$ on $\bar{\Omega} \cap \{x_n = 0 \}$.
In particular, we find that
\begin{equation}
\label{eqn: MA for pogo tang approx}
\det D^2 \psi = \frac{(x_n + \epsilon)^\alpha}{(\psi_n + \epsilon)^\beta} \text{ in } \Omega,
\end{equation}
\begin{equation}
\label{eqn: bdry for pogo tang approx}
\psi_n = 0 \text{ on } \{ x_n = 0 \},
\end{equation}
and $\psi$ is smooth in $\Omega$ and $D^2 \psi$ is H\"older continuous and strictly positive definite up to $\{ x_n = 0 \}$.
By Theorem~\ref{thm: global Csigma}, $\nabla \psi$ converges to $\nabla u$ locally uniformly in $\bar{\Omega} \cap \{ x_n \geq 0\}$.
(We can choose doubling constants for the denisites $(x_n + \epsilon)^\alpha$ and $(y_n + \epsilon)^\beta$ unifornly in $\epsilon$.)
So it suffices to prove \eqref{eqn: tang C11} for $\psi$.

If we differentiate the log of \eqref{eqn: MA for pogo tang approx} and \eqref{eqn: bdry for pogo tang approx} in any tangential direction $\e$ (with $\e \cdot \e_n = 0$), we have that
\begin{equation}
\label{eqn: MA_1 for pogo approx}
\psi^{ij}\partial_{ij}\psi_\e = -\beta \frac{\psi_{n\e}}{\psi_n + \epsilon} \text{ in } \{ x_n > 0 \}
\end{equation}
and
\[
\label{eqn: MA_1 for pogo approx bdry}
\partial_n \psi_\e = 0 \text{ on } \{ x_n = 0 \}. 
\]
The right-hand side of this equation is H\"older continuous.
Hence, $D^2\psi_\e$ is H\"older continuous across $\{x_n = 0 \}$.
Differentiating again in the $\e$ direction, we find that
\begin{equation}
\label{eqn: MA_11 for pogo approx}
\psi^{ij}\partial_{ij} \psi_{\e\e} = \beta \frac{\psi_{n \e}^2}{(\psi_n + \epsilon)^2} - \beta \frac{\psi_{n\e\e}}{\psi_n + \epsilon} + \psi^{ik}\psi^{jl}\psi_{ij\e}\psi_{kl\e}
\end{equation}
and
\[
\partial_n \psi_{\e\e} = 0 \text{ on } \{ x_n = 0 \}. 
\]
The right-hand side of this equation is H\"older continuous, given the H\"older continuity of $D^2 \psi_\e$ just observed.
In conclusion, the fourth order derivatives $\bar{\psi}_{ijkl}$ are continuous across $\{ x_n = 0 \}$ provided no more than two of the four indices are $n$.
And so,
\[
M := \log |\bar{\psi}| + \log \bar{\psi}_{\e\e} + \tfrac{1}{2}\bar{\psi}_\e^2
\]
is $C^2(\overline{S})$ with $S = S_h(\bar{\psi},0)$.
The ball $B$ is chosen large enough so that $S_h(\bar{\psi},0) \Subset \bar{\Omega}$.
Furthermore, up to subtracting off the tangent plane to $\bar{\psi}$ and $h$, we can assume that that $S = \{ \bar{\psi} < 0 \}$.

Let $z \in \{ \bar{\psi} < 0 \}$ be a point at which $M$ achieves its maximum.
(The point $z \notin \{ \bar{\psi} = 0 \}$ since $e^M$ vanishes on $\{ \bar{\psi} = 0 \}$.)
As $\bar{\psi}$ is even in $x_n$, we can assume that $z \in \{ x_n \geq 0 \}$.
So for notational simplicity, we identify $\bar{\psi}$ with $\psi$.

\medskip
{\bf Case 1: $M$ is achieved at $z \in \{ x_n = 0 \}$.}
By \eqref{eqn: bdry for pogo tang approx}, at $z$,
\[
\psi_{ni} = 0 \text{ for all } i < n.
\]
So, after an orthogonal transformation in the tangential coordinates, which leaves the equation invariant, we can assume that $D^2\psi(z)$ is diagonal and $\e = \e_1$.
(While the equality $\psi_{ni}|_{\{ x_n = 0 \}} = 0$ simplifies some of the expressions below, we refrain from using it, so that Case 2 becomes evident.)

First, differentiating $M$ twice in the $\e_i$ and evaluating at $z$, we find
\begin{equation}
\label{eqn: pogo tang max equal approx}
\frac{\psi_i}{\psi} + \frac{\psi_{11i}}{\psi_{11}} + \psi_1\psi_{1i} = 0
\end{equation}
and
\[
\frac{\psi_{ii}}{\psi} - \frac{\psi_i^2}{\psi^2}  + \frac{\psi_{11ii}}{\psi_{11}} - \frac{\psi_{11i}^2}{\psi_{11}^2} + \psi_{1i}^2 + \psi_1\psi_{1ii} \leq 0.
\]
So multiplying by $\psi^{ii} = \psi_{ii}^{-1}$ and summing over $i$, we deduce that
\begin{equation}
\label{eqn: pogo tang max ineq approx}
\frac{n}{\psi} - \frac{\psi^{ii}\psi_i^2}{\psi^2}  + \frac{\psi^{ii}\psi_{11ii}}{\psi_{11}} - \frac{\psi^{ii} \psi_{11i}^2}{\psi_{11}^2} + \psi_{11} + \psi_1 \psi^{ii} \psi_{1ii} \leq 0.
\end{equation}
Second, considering \eqref{eqn: MA_1 for pogo approx}, \eqref{eqn: MA_11 for pogo approx}, and \eqref{eqn: pogo tang max ineq approx}, we have that
\[
\frac{n}{\psi} - \frac{\psi^{ii}\psi_i^2}{\psi^2} -  \frac{\beta \psi_{n11}}{(\psi_n + \epsilon) \psi_{11}} + \frac{ \psi^{ii}\psi^{jj}\psi_{1ij}^2}{\psi_{11}} - \frac{\psi^{ii} \psi_{11i}^2}{\psi_{11}^2} + \psi_{11} -\frac{\beta \psi_1 \psi_{n1} }{\psi_n + \epsilon} \leq 0.
\]
Third,
\[
\frac{ \psi^{ii}\psi^{jj}\psi_{1ij}^2}{\psi_{11}} - \frac{\psi^{ii} \psi_{11i}^2}{\psi_{11}^2} = \frac{1}{\psi_{11}} \sum_{i = 2, j =1 }^n \frac{\psi_{1ij}^2}{\psi_{ii}\psi_{jj}},
\]
and, by \eqref{eqn: pogo tang max equal approx}, we find that
\[
\frac{ \psi^{ii}\psi^{jj}\psi_{1ij}^2}{\psi_{11}} - \frac{\psi^{ii} \psi_{11i}^2}{\psi_{11}^2} \geq \sum_{i = 2}^n \frac{\psi_i^2}{ \psi_{ii} \psi^2}.
\]
Our first three steps together yield
\[
\frac{n}{\psi} - \frac{\psi_1^2}{\psi_{11}\psi^2} - \frac{\beta}{\psi_n+\epsilon}\bigg(\frac{\psi_{n11}}{\psi_{11}} + \psi_1\psi_{n 1} \bigg)  + \psi_{11} \leq 0.
\]
Now from \eqref{eqn: pogo tang max equal approx} again, i.e.,
\[
\frac{\psi_{11n}}{\psi_{11}} + \psi_1\psi_{n 1} = -\frac{\psi_n}{\psi},
\]
it follows (recall $\psi < 0$ and $\psi_n = 0$ at $z$) that
\[
\frac{n}{\psi} - \frac{\psi_1^2}{\psi_{11}\psi^2} + \psi_{11}  = \frac{1}{\psi}\bigg( n + \beta \frac{\psi_n}{\psi_n + \epsilon}\bigg) - \frac{\psi_1^2}{\psi_{11}\psi^2} + \psi_{11} \leq 0.
\] 
Consequently,
\[
|\psi|\psi_{11} \leq C(n, \|\psi_1\|_{L^\infty(S)}).
\]

\medskip
{\bf Case 2: $M$ is achieved at $z \in \{ x_n > 0 \}$.}
In this case, after a rotation $\e \mapsto \e_1$, a shearing transformation $x \mapsto (x_1 - s_i x_i,x_2,\dots, x_n)$ for $i = 2, \dots, n$, and then a rotation in the $x_i$ variables for $i > 1$, we can assume $D^2 \psi(z)$ is diagonal provided we replace \eqref{eqn: MA for pogo tang approx} and \eqref{eqn: bdry for pogo tang approx}
with
\begin{equation}
\label{eqn: MA for pogo tang approx int}
\det D^2 \psi = \frac{(x \cdot {}^\backprime\xi + \epsilon)^\alpha}{(\psi_{\xi} + \epsilon)^\beta} \text{ in } \{ x \cdot {}^\backprime\xi > 0 \} \text{ with } {}^\backprime\xi \cdot \e_1 = 0,\, |\xi| = 1,
\end{equation}
and
\begin{equation}
\label{eqn: bdry for pogo tang approx int}
\psi_{\xi} := s_n\psi_1 +  \psi_{{}^\backprime\xi} = s_n\psi_1 +  \nabla \psi \cdot {}^\backprime\xi = 0 \text{ on } \{ x \cdot {}^\backprime\xi = 0 \}.
\end{equation}
Identical computations to those in Case 1 yield the same final inequality:
\[
\frac{n + \beta}{\psi} - \frac{\psi_1^2}{\psi_{11}\psi^2} + \psi_{11}  \leq \frac{1}{\psi}\bigg( n + \beta \frac{\psi_\xi}{\psi_\xi + \epsilon}\bigg) - \frac{\psi_1^2}{\psi_{11}\psi^2} + \psi_{11} \leq 0,
\] 
from which \eqref{eqn: tang C11} follows for $\psi$, as desired.
\end{proof}

A an important consequence of Proposition~\ref{prop: pogo tang} (applied to $u$ and $v$ the Legendre transform of $u$) is that $u_n$ and $x_n^\gamma$ are comparable.
Recall,
\[
\gamma := \frac{1+\alpha}{1+\beta}.
\]
Since the rescalings
\begin{equation}
\label{eqn: rescalings}
u_t(x) := \frac{u(D_t x)}{t} \text{ with } D_t := \diag(t^{\frac{1}{2}}\Id', t^{\frac{1}{1+\gamma}})
\end{equation}
leave the equation invariant, the correct geometry in which to work is defined by the cylinders
\[
\mathcal{C}_r(z) := B'_{r^{1/2}}(z') \times (z_n - r^\frac{1}{1+\gamma}, z_n + r^\frac{1}{1+\gamma}) \text{ and } \mathcal{C}_r := \mathcal{C}_r(0).
\]
We state our comparability estimate in this geometry.

\begin{lemma}
\label{lem: pure normal comp to 1}
Let $u$ be convex and satisfy \eqref{eqn: global OT flat}.
Then two constants $c_0 > 0$ and $C_0 > 0$, depending on $\|\nabla_{x'} u\|_{L^\infty(\mathcal{C}_1 \cap \{ x_n \geq 0\})}$, $\|\nabla_{y'} v\|_{L^\infty(\nabla u(\mathcal{C}_1 \cap \{ x_n \geq 0\}))}$, $\alpha$, $\beta$, and $n$, exist such that
\begin{equation}
\label{eqn: normal C11}
c_0 \leq  \frac{u_n}{x_n^\gamma} \leq C_0  \text{ on } \mathcal{C}_1 \cap \{ x_n \geq 0\}.
\end{equation}
\end{lemma}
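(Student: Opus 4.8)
The plan is to run the tangential Pogorelov estimate of Proposition~\ref{prop: pogo tang} on both $u$ and its Legendre transform $v$, and then to convert the resulting two-sided control of tangential second derivatives into control of the normal slope $u_n$ by means of mass balance and the volume-product estimate for sections. Fix $x^0\in\mathcal C_1\cap\{x_n>0\}$; the goal is $u_n(x^0)\sim (x^0_n)^\gamma$. Since the rescalings $u\mapsto u_t$ of \eqref{eqn: rescalings} leave \eqref{eqn: global OT flat} invariant, the ratio $u_n/x_n^\gamma$ is invariant under them, and the maps $D_t$ carry cylinders to cylinders, so it is enough to argue at the scale attached to $x^0$: set $\bar x:=(x^{0\prime},0)$, subtract from $u$ its tangent plane at $\bar x$ so that $u(\bar x)=0=\nabla u(\bar x)$ (recall $u_n(\bar x)=0$ by the boundary condition in \eqref{eqn: global MA flat}), and put $h:=u(x^0)>0$, so that $x^0\in\partial S_h(u,\bar x)$. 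After this normalization $u\ge 0$, vanishing only at $\bar x$ by strict convexity, and the even reflection $x\mapsto u(x',|x_n|)$ is convex; hence $S:=S_h(u,\bar x)$ is symmetric in $x_n$ with lowest point $\bar x$, and $x^0=(\bar x',x^0_n)$ lies on its upper normal axis, so $x^0_n$ is the normal extent of $S$ along $\{x'=\bar x'\}$.

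Next I would show that $S$ and its image $\nabla u(S)$ are nondegenerate in the tangential directions. Proposition~\ref{prop: pogo tang} applied to $u$ at $\bar x$ gives, for each tangential $\e$, the bound $u_{\e\e}\,|u-h|\le C$ on $S$, with $C$ controlled by $\|\nabla_{x'}u\|_{L^\infty(\mathcal C_1\cap\{x_n\ge 0\})}$ (enlarging the cylinder by a fixed factor when $S\not\subset\mathcal C_1$, which occurs only for $x^0$ bounded away from $\{x_n=0\}$, a regime covered by compactness and strict convexity). The Legendre transform $v$ is convex, satisfies $(\nabla v)_\#d^\beta_{\partial\{y_n>0\}}=d^\alpha_{\partial\{x_n>0\}}$, and solves \eqref{eqn: global MA flat} with $\alpha$ and $\beta$ interchanged, so Proposition~\ref{prop: pogo tang} bounds $v_{\e\e}$ from above on the dual section. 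Because $v_{\e\e}(\nabla u(x))=\e^{t}(D^2u(x))^{-1}\e$, Cauchy--Schwarz turns this into a lower bound for $u_{\e\e}$. Combined with the strict-convexity and engulfing theory of Section~\ref{sec: Caff Bdry Reg Revisit}, which makes $\nabla u(S)$ comparable to a centered section of $v$ of comparable height, this pins the tangential semi-axes of both $S$ and $\nabla u(S)$ at size $\sim h^{1/2}$.

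It then remains to solve for the normal sizes. Write $\rho$, $\tau$ for the normal semi-axes of $S$, $\nabla u(S)$. The volume-product estimate (the flat counterpart of Corollaries~\ref{cor: sections are ellipsoids} and~\ref{cor: volume product}, available since $(x_n)_+^\alpha$ and $(y_n)_+^\beta$ are doubling by the reasoning behind Lemma~\ref{lem: doubling on ellipsoids}) gives $|S|\,|\nabla u(S)|\sim h^n$, hence $\rho\tau\sim h$ once the tangential factors $h^{(n-1)/2}$ cancel; and mass balance (Lemma~\ref{lem: Brenier soln}) applied to $S\cap\{x_n>0\}$ gives $\rho^{1+\alpha}\sim\tau^{1+\beta}$. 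Solving the two relations yields $\rho\sim h^{1/(1+\gamma)}$ and $\tau\sim h^{\gamma/(1+\gamma)}$. Since $x^0_n\sim\rho$, we get $h\sim(x^0_n)^{1+\gamma}$; and since $u_n$ is nondecreasing in $x_n$ and $\nabla u$ of the segment $S\cap\{x'=\bar x'\}$ attains $y_n$-height $u_n(x^0)\sim\tau$, we conclude $u_n(x^0)\sim h^{\gamma/(1+\gamma)}\sim(x^0_n)^\gamma$, which is \eqref{eqn: normal C11}.

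The main obstacle is the tangential nondegeneracy step: ruling out that sections based on $\{x_n=0\}$ collapse or spread tangentially, uniformly as $h\to 0$ (equivalently, as $x^0$ approaches $\{x_n=0\}$). The upper bound on $u_{\e\e}$ is exactly Proposition~\ref{prop: pogo tang}, but the matching lower bound requires both the dual Pogorelov estimate and the comparability of the image section $\nabla u(S)$ with a genuine centered section of $v$; this is precisely where the dependence of $c_0$ and $C_0$ on the tangential gradient norms of $u$ and $v$ enters, through the constant in Proposition~\ref{prop: pogo tang}.
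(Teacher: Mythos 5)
Your proposal is correct and takes essentially the same route as the paper: Pogorelov tangential estimates (Proposition~\ref{prop: pogo tang} applied to $u$ and to its Legendre transform $v$) pin the tangential widths of boundary sections at $\sim h^{1/2}$, and mass balance together with section duality/volume product forces the normal extent $d_h \sim h^{1/(1+\gamma)}$, after which convexity converts $u(\bar x', x_n)\sim x_n^{1+\gamma}$ into $u_n \sim x_n^{\gamma}$. The paper packages your two relations $\rho\tau\sim h$ and $\rho^{1+\alpha}\sim\tau^{1+\beta}$ as the single balancing identity \eqref{eqn: mass balance doubling}, derived via an axis-parallel comparison ellipsoid for the section (using $u_n\ge 0$ and the balancedness of sections from Section~\ref{sec: Caff Bdry Reg Revisit}), which is the shape-control step you gloss but which is filled by exactly the machinery you cite.
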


\begin{proof} 
We claim that if
\[
\text{$D^2_{x'} u(0) \geq \frac{1}{M}\Id'$ (or $\leq M \Id'$)},
\]
then
\[
\text{$u_n(0,x_n) \leq C(M)x_n^\gamma$ (or $\geq c(M)x_n^\gamma$).}
\]
Before proving this claim, we use it to conclude the proof of our lemma.
Up to subtracting the tangent plane to $u$ at $0$, assume that $u \geq 0$.
Then, as $0$ was arbitrary, our lemma follows, since Proposition~\ref{prop: pogo tang} (applied to $u$ and $v$ the Legendre transform of $u$) tells us that 
\[
\frac{1}{M}\Id' \leq D^2_{x'}u \leq M\Id' \text{ in } B'_1 \times \{ 0 \}.
\]

Now we prove our claim. 
First, we show that $S_h(0)$ is comparable to an ellipsoid whose axes are parallel to the coordinate axes.
Indeed, let 
\[
l_{i;h} = l_{i;h}(u,0) := - \inf \{ x_i : (x_i,0) \in S_h(u,0) \}
\]
and
\[
r_{i;h} = r_{i;h}(u,0) := \sup \{ x_i : (x_i,0) \in S_h(u,0) \}.
\]
By Corollary~\ref{cor: equivalence of sections},
\[
c l_{i;h} \leq r_{i;h} \leq \frac{l_{i;h}}{c}
\]
for some $c \leq 1$, depending only on $\alpha$, $\beta$, and $n$.
Now define
\[
w_{i;h} := r_{i;h} + l_{i;h}
\]
and
\[
d_h = d_h(u,0) := \sup \{ x_n : (0,x_n) \in S_h(u,0) \},
\]
and consider 
\[
T_{i;h} := \text{\,the triangle determined by $(0,d_h)$ and $(\pm \min \{r_{i;h},l_{i;h}\} ,0)$.}
\]
(The center of mass of $T_{i;h}$ is $(0, \frac{1}{3}d_h)$.)
Note that $S_h(0) \cap \Span\{ \e_i,\e_n \}$ is contained in the union of the subgraphs of the lines determined by $(0,d_h)$ and  $(\pm \min \{r_{i;h},l_{i;h}\},0)$ and inside the strip $[-l_{i;h}, r_{i;h}] \times [0,\infty)$.
The heights of the intersections of these lines and the boundary of the strip is less than or equal to $d_h(1 + c^{-1})$, from which find a $C \geq 1$, depending only on $\alpha$, $\beta$, and $n$, such that $C T_{i;h} \supset S_h(0) \cap \Span\{ \e_i,\e_n \}$.
In turn, if $\mathcal{E}_h$ is the John ellipsoid of $\conv \{ \cup_{i < n} T_{i;h} \}$, then
\begin{equation}
\label{eqn: new ellipsoid}
\delta \mathcal{E}_h \subset S_h(0) \subset \frac{1}{\delta} \mathcal{E}_h,
\end{equation}
for some $0 < \delta < 1$ depending only on $c$.
The ellipsoid $\mathcal{E}_h$ has axes parallel to the coordinate axes, and is our desired ellipsoid.

Since $\mathcal{E}_h$ has axes parallel to the coordinate axes the distance from its center to $\partial \{x_n > 0\}$ is the vertical height of the center, which is comparable to $d_h$.
Then arguing as in Lemma~\ref{lem: doubling on ellipsoids}, but using the ellipsoid just constructed above, we see that
\[
\frac{1}{C} d_h^\alpha d_h w_{1;h} \cdots w_{n-1;h} \leq  \int_{S_h(u)} (x_n)_+^\alpha \leq Cd_h^\alpha d_h w_{1;h} \cdots w_{n-1;h} .
\]
And, as $S_h(v)$ is dual to $S_h(u)$ (Corollary~\ref{cor: image of section is comp to dual section}), we similarly find that
\[
\frac{1}{C} \frac{h^\beta}{d_h^\beta} \frac{h}{d_h} \frac{h}{w_{1;h}} \cdots  \frac{h}{w_{n-1;h} } \leq \int_{S_h(v)} (y_n)_+^\beta \leq C \frac{h^\beta}{d_h^\beta} \frac{h}{d_h} \frac{h}{w_{1;h}} \cdots  \frac{h}{w_{n-1;h} }.
\]
Moreover, by Lemma~\ref{lem: Brenier soln} and Corollary~\ref{cor: image of section is comp to dual section}, again,
\[
\int_{S_h(u)} (x_n)_+^\alpha \sim \int_{S_h(v)} (y_n)_+^\beta.
\]
Therefore,
\begin{equation}
\label{eqn: mass balance doubling}
\frac{1}{C} \leq \frac{d_h^{2 + \alpha +\beta} w_{1;h}^2 \cdots w_{n-1;h}^2}{h^{n+\beta}} \leq C.
\end{equation}

By assumption, for all $i \leq n-1$,
\[
w_{i;h}^2 \leq 2M h.
\]
In turn, $h^{1+\beta} \leq CM^{n-1}d_h^{2 + \alpha +\beta}$, or, equivalently,
\[
h^{\frac{1}{1+\gamma}} \leq CM^{\frac{n-1}{2 + \alpha +\beta}}d_h.
\]
Thus, $C(M)x_n^{1+\gamma} \geq u(0,x_n) \geq 0$.
So our claim follows by the convexity of $u$; indeed,
\[
C(M)2^{1+\gamma}x_n^{1+\gamma} \geq u(0,2x_n) \geq u(0,2x_n) - u(0,x_n) \geq u_n(0,x_n)x_n.
\]
\end{proof}

Lemma~\ref{lem: pure normal comp to 1} effectively gives us control over the second derivatives of $u$ in the normal direction.
And since $u_n/x_n^\gamma$ is a solution to an elliptic equation, with Lemma~\ref{lem: pure normal comp to 1} in hand, we can prove an oscillation decay estimate for $u_n/x_n^\gamma$.
In particular, 
\[
\phi := \frac{u_n}{x_n^\gamma}
\]
solves
\[
u^{ij}\phi_{ij} + \frac{\beta x_n\delta^{in} + (1+\gamma)u_nu^{in}}{x_nu_n}\phi_i = 0. 
\]
Here $\delta^{ij} = 0$ if $i \neq j$ and $\delta^{ij} = 1$ if $i = j$.

\begin{proposition}
\label{prop: osc decay at bdry}
Let $u$ be convex and satisfy \eqref{eqn: global OT flat}.
A constant $\zeta \in (0,1)$ exists, depending only on $\alpha$, $\beta$, and $n$, such that
\begin{equation}
\label{eqn: osc decay at bdry}
\osc_{\mathcal{C}_{1/2} \cap \{ x_n \geq 0\}} \frac{u_n}{x_n^\gamma}  \leq (1-\zeta) \osc_{\mathcal{C}_1 \cap \{ x_n \geq 0\}} \frac{u_n}{x_n^\gamma}.
\end{equation}
\end{proposition}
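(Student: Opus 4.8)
The plan is to prove \eqref{eqn: osc decay at bdry} by establishing a weak Harnack inequality for $\phi = u_n/x_n^\gamma$ that is valid up to the flat boundary $\{x_n = 0\}$, and then running the usual oscillation-decay dichotomy. \textbf{First, I would normalize.} The rescalings $u_t$ of \eqref{eqn: rescalings} leave \eqref{eqn: global MA flat}, the cylinders $\mathcal{C}_r$, the equation for $\phi$, and (by Lemma~\ref{lem: pure normal comp to 1}) the bounds $c_0 \le \phi \le C_0$ all invariant, so it suffices to treat unit scale. Since the equation for $\phi$ is linear and has no zeroth-order term, I may replace $\phi$ by $(\phi - \inf_{\mathcal{C}_1 \cap \{x_n\ge0\}}\phi)/\osc_{\mathcal{C}_1\cap\{x_n\ge0\}}\phi$, a solution of the \emph{same} equation (its coefficients depend on $u$ and $x_n$, not on $\phi$), and assume $0 \le \phi \le 1$ on $\mathcal{C}_1 \cap \{x_n \ge 0\}$ with $\osc_{\mathcal{C}_1\cap\{x_n\ge0\}}\phi = 1$; the bound $c_0 \le u_n/x_n^\gamma \le C_0$ for the original $\phi$ is retained only to control the (singular) coefficients. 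As $\phi + (1-\phi) \equiv 1$, one of $w \in \{\phi,\,1-\phi\}$ is a nonnegative solution with $|\{w \ge \tfrac12\} \cap \mathcal{C}_{3/4}\cap\{x_n\ge0\}| \ge \tfrac12|\mathcal{C}_{3/4}\cap\{x_n\ge0\}|$. It then remains to show $w \ge \eta$ on $\mathcal{C}_{1/2}\cap\{x_n\ge0\}$ for some $\eta>0$, since this forces $\osc_{\mathcal{C}_{1/2}\cap\{x_n\ge0\}}\phi \le 1-\eta$, which is \eqref{eqn: osc decay at bdry} with $\zeta=\eta$.

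\textbf{Second, the heart of the matter}, is the boundary weak Harnack estimate: a nonnegative supersolution of
\[
u^{ij}w_{ij} + \tfrac{\beta\delta^{in} + (1+\gamma)u_nu^{in}}{x_nu_n}\,w_i \le 0
\]
on $\mathcal{C}_1\cap\{x_n\ge0\}$ whose superlevel set $\{w\ge\tfrac12\}$ fills a fixed fraction of $\mathcal{C}_{3/4}\cap\{x_n\ge0\}$ is bounded below by a constant on $\mathcal{C}_{1/2}\cap\{x_n\ge0\}$. The second-order part is the linearized Monge--Amp\`ere operator, and here $\det D^2 u = x_n^\alpha/u_n^\beta$ is, by Lemma~\ref{lem: pure normal comp to 1}, comparable to $x_n^{\alpha-\beta\gamma}$, hence locally doubling; thus the section geometry of Section~\ref{sec: Caff Bdry Reg Revisit} applies, and in fact (by the argument in the proof of Lemma~\ref{lem: pure normal comp to 1}) sections based on $\{x_n=0\}$ are comparable to ellipsoids with axes parallel to the coordinate directions, of size $\sim h^{1/2}$ tangentially and $\sim h^{1/(1+\gamma)}$ normally, and obey the volume-product and comparability bounds of Corollaries~\ref{cor: sections are ellipsoids}, \ref{cor: equivalence of sections}, and \ref{cor: volume product}. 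On this geometry the Caffarelli--Guti\'errez machinery delivers the measure-to-measure estimate; to upgrade it to a pointwise lower bound one builds subsolution barriers inside a section $S_{h_0}$ covering $\mathcal{C}_{1/2}$, the second-order part being the usual barrier constructed from a negative power of the defining potential $\ell - u$ (using $u^{ij}u_{ij}=n$ and a bound on $u^{ij}u_iu_j$ in the section).

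\textbf{The main obstacle} is the first-order term, which is singular at $\{x_n=0\}$: for the model potential $u = \tfrac12|x'|^2 + cx_n^{1+\gamma}$ the drift equals $\tfrac{2+\alpha}{c\gamma(1+\gamma)}x_n^{-\gamma}\e_n$, a definite \emph{inward} drift that blows up at the boundary. I would absorb it by adjoining to the barrier a term singular along $\{x_n=0\}$ at a matching rate, arranged so that the degenerate ellipticity ($u^{nn}\sim x_n^{1-\gamma}$ in the relevant directions, by Lemma~\ref{lem: pure normal comp to 1} and the shape of sections) dominates the inward drift; the bound $c_0 \le u_n/x_n^\gamma \le C_0$ enters precisely here, to control $1/u_n$ and $u^{in}/x_n$ in the normalized coordinates of $S_{h_0}$. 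An alternative route is a compactness argument --- if the decay failed, rescale a violating sequence, pass to a limit solution with $\osc_{\mathcal{C}_{1/2}\cap\{x_n\ge0\}} = \osc_{\mathcal{C}_1\cap\{x_n\ge0\}} = 1$, and contradict a strong maximum principle --- but this still needs a Hopf-type lemma at the flat boundary for the same degenerate operator with singular drift, so the essential difficulty is unchanged. With the boundary weak Harnack in hand the dichotomy closes: whichever of $\phi$, $1-\phi$ is $w$, we gain a definite one-sided improvement of $\phi$ over $\mathcal{C}_{1/2}\cap\{x_n\ge0\}$, giving \eqref{eqn: osc decay at bdry}; after the affine normalization available for the flat problem (which renders the $L^\infty$ quantities of Lemma~\ref{lem: pure normal comp to 1} universal), $\zeta$ depends only on $n$, $\alpha$, and $\beta$.
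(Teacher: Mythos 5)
Your reduction steps are fine (the rescaling invariance, the affine renormalization of $\phi=u_n/x_n^\gamma$ using that its equation has no zeroth-order term, and the dichotomy between $\phi$ and $1-\phi$), but the proposal has a genuine gap at exactly the point you yourself flag as ``the heart of the matter'': the boundary weak Harnack inequality for nonnegative supersolutions of $u^{ij}\partial_{ij}w + b\cdot\nabla w \le 0$ up to $\{x_n=0\}$, with drift $|b|\sim x_n^{-\gamma}$, is asserted rather than proved. Invoking the Caffarelli--Guti\'errez machinery does not close it: that theory is an interior theory for the driftless linearized Monge--Amp\`ere operator, and here the sections of interest are centered on $\{x_n=0\}$, where both the degeneracy of $u^{ij}$ in the normal direction and the singular drift live; there is no off-the-shelf measure-to-pointwise estimate in this setting, and an even-reflection trick is not available for $\phi$. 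Your sketch of how to handle the drift (``adjoin to the barrier a term singular along $\{x_n=0\}$ at a matching rate, arranged so that the degenerate ellipticity dominates the inward drift'') is precisely the computation that has to be carried out and verified; whether it works depends on the exact balance between $\alpha$, $\beta$, and $\gamma$, not just on the qualitative bounds $c_0\le u_n/x_n^\gamma\le C_0$. The alternative compactness route you mention needs a Hopf-type lemma for the same operator, so, as you concede, the difficulty is merely relocated, not resolved.

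For comparison, the paper does not prove a boundary weak Harnack at all. It uses the pointwise dichotomy at the single point $2^{-\frac{1}{1+\gamma}}\e_n$ (rather than a measure-theoretic dichotomy), an interior Harnack chain on $\{x_n\ge\delta\}$ where the operator is uniformly elliptic thanks to the tangential Pogorelov estimate (Proposition~\ref{prop: pogo tang}, applied to $u$ and its Legendre transform) and Lemma~\ref{lem: pure normal comp to 1}, and then propagates the interior gain to the boundary with the explicit barriers
\[
\psi := (1\mp\epsilon)x_n^\gamma \pm c_1\epsilon|x'|^2 \mp \epsilon x_n^\kappa, \qquad \gamma<\kappa<1,
\]
in the thin region $\{x_n^\gamma>\tfrac{c_1}{2}|x'|^2\}\cap\{x_n<\delta\}$. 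The verification that these are admissible barriers is where the singular drift is actually beaten: it hinges on the identity $\alpha-\beta\gamma=\gamma-1$, on $u^{nn}\ge 1/u_{nn}$, and on the two-sided bound $\tfrac1{C'}\Id'\le D^2_{x'}u\le C'\Id'$ from the Pogorelov estimate, with the case $\alpha=\beta$ handled by taking $\kappa=2$ and the case $\alpha>\beta$ by duality. Your proposal would need to supply an argument of comparable precision for the claimed boundary weak Harnack (or Hopf lemma); as written, the key estimate is missing.
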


(A similar and simpler version of the proof of this proposition can be found in the proof of Lemma~\ref{lem: bdry does not flatten}.)

\begin{proof}
From \eqref{eqn: normal C11}, 
\[
\text{ either }\frac{u_n}{x_n^\gamma}(2^{-\frac{1}{1+\gamma}} \e_n) < \frac{C_0+c_0}{2}
\text{ or }
\frac{u_n}{x_n^\gamma}(2^{-\frac{1}{1+\gamma}} \e_n) \geq \frac{C_0+c_0}{2}.
\]
If the first inequality holds, we build a barrier that pulls $u_n/x_n^\gamma$ down in $\mathcal{C}_{1/2}$.
Whereas if the second inequality holds, we build a barrier that pulls $u_n/x_n^\gamma$ up in $\mathcal{C}_{1/2}$.

\medskip
{\bf Case 1: $\alpha < \beta$.}
If the first inequality holds, up to dividing by $C_0$, assume that $C_0 = 1$.
Then let
\[
\psi := (1-\epsilon)x_n^\gamma + c_1 \epsilon |x'|^2 - \epsilon x_n^\kappa
\]
with $c_1 > 0$ and $1 > \kappa > \gamma$ to be chosen.
(This is our upper barrier.)
First, note that
\[
\psi \geq x_n^\gamma \text{ on } \{ x_n^\gamma \leq \tfrac{c_1}{2}|x'|^2 \}.
\]
Second, given any $\delta \in (0,2^{-\frac{1}{1+\gamma}})$, applying the Harnack inequality to $u_n/x_n^\gamma$ along a chain of overlapping balls, we find that
\[
\sup_{\mathcal{C}_{1/2} \cap \{ x_n \geq \delta \}  } u_n \leq  (1 - c(\delta)) x_n^\gamma,
\]
for some small $c(\delta) > 0$.
Third, let $\delta$ be small enough to ensure that
\[
\Omega := \{ x_n^\gamma >\tfrac{c_1}{2}|x'|^2 \} \cap \{ x_n < \delta \} \subset \mathcal{C}_{1/2} \cap \{ x_n \geq 0\}.
\] 
Set
\[
\epsilon := c(\delta).
\]
Then 
\[
\psi \geq u_n \text{ on } \partial \Omega.
\]
With our boundary values understood, we now turn to the interior of $\Omega$ and the equation.

Suppose that $u_n - \psi$ achieves its maximum at some point in $\Omega$.
Then, at this point, $u_n = \psi + s$ for some $s \geq 0$, and
\begin{equation}
\label{eqn: osc decay rhs contra}
0 \geq u^{ij}\partial_{ij}(u_n - \psi) \geq \frac{\alpha}{x_n} - \frac{\beta \psi_n}{ \psi} - u^{ii}\psi_{ii}.
\end{equation}
Using that $\nabla u_n = \nabla \psi$ at our distinguished point, we show that \eqref{eqn: osc decay rhs contra} is impossible provided $\delta > 0$ is sufficiently small, which forces our distinguished point to be very close to $\{ x_n = 0 \}$.

First, we compute an upper bound for the quotient $\psi_n/\psi$.
Observe that
\[
\frac{\psi_n}{ \psi} \leq \frac{(1-\epsilon)\gamma x_n^{\gamma-1} - \epsilon\kappa x_n^{\kappa-1} }{(1-\epsilon)x_n^\gamma  - \epsilon x_n^\kappa} = \frac{\gamma}{x_n} \frac{(1-\epsilon)x_n^{\gamma} - \epsilon\kappa \gamma^{-1}x_n^{\kappa} }{(1-\epsilon)x_n^\gamma  - \epsilon x_n^\kappa}.
\]
Now we look at the sum $u^{ii}\psi_{ii}$, which we break into two pieces: $u^{nn}\psi_{nn}$ and the remainder.
First, observe that
\[
\sum_{i < n} u^{ii}\psi_{ii} = 2 c_1 \epsilon \trace (D^2_{x'} u)^{-1}.
\]
From Proposition~\ref{prop: pogo tang} (applied to $u$ and its Legendre transform), we have that
\[
\frac{1}{C'} \leq (D^2_{x'} u)^{-1} \leq C'.
\]
Therefore, choose
\[
c_1 := \frac{1}{2C'(n-1)}.
\]
Hence,
\[
- \sum_{i < n} u^{ii}\psi_{ii} \geq - \epsilon.
\]
Now considering $D^2u$ as a block matrix, we see that
\[
u^{nn} = \frac{1}{u_{nn} - \nabla_{x'} u_n (D^2_{x'} u )^{-1} \nabla_{x'} u_n} \geq \frac{1}{u_{nn}};
\]
and so, provided $\delta > 0$ is small enough so that $\psi_{nn} \leq 0$,
\[
- u^{nn}\psi_{nn} \geq - \frac{\gamma-1}{x_n} \frac{(1-\epsilon) x_n^\gamma -  \epsilon \kappa \gamma^{-1} (\kappa-1)(\gamma -1)^{-1} x_n^\kappa }{(1-\epsilon) x_n^\gamma - \epsilon \kappa \gamma^{-1} x_n^\kappa}.
\]
In turn, we have the following inequality, for the right-hand side of our equation,
\[
\frac{\alpha}{x_n} - \frac{\beta \psi_n}{ \psi} - u^{ii}\psi_{ii}  \geq \frac{\alpha}{x_n} - \frac{\beta\gamma}{x_n} {\rm I} - \frac{\gamma-1}{x_n} {\rm II} - \epsilon
\]
with
\[
{\rm I} := \frac{(1-\epsilon)x_n^{\gamma} -  \epsilon\kappa \gamma^{-1}x_n^{\kappa} }{(1-\epsilon)x_n^\gamma  -  \epsilon x_n^\kappa}
\]
and
\[
{\rm II} :=   \frac{(1-\epsilon) x_n^\gamma - \epsilon \kappa \gamma^{-1} (\kappa-1)(\gamma -1)^{-1} x_n^\kappa }{(1-\epsilon) x_n^\gamma - \epsilon \kappa \gamma^{-1} x_n^\kappa }.
\]
As
\[
\alpha - \beta \gamma = \gamma - 1,
\]
we rearrange our lower bound as follows, splitting ${\rm II}$ into two pieces: 
\[
\frac{\alpha}{x_n} - \frac{\beta\gamma}{x_n} {\rm I} - \frac{\gamma-1}{x_n} {\rm II} = \frac{\alpha}{x_n}( 1 - {\rm II} ) +  \frac{\beta\gamma}{x_n} ({\rm II} -{\rm I} ).
\]

Now we estimate the two factors, above, in parentheses.
Observe that
\[
1 - {\rm II} \geq - \frac{ \epsilon }{1-\epsilon}C_{0,\kappa,\gamma} x_n^{\kappa - \gamma}( 1 + \bar{C}\epsilon x_n^{\kappa - \gamma} )
\]
with
\[
C_{0,\kappa,\gamma} := \frac{\kappa(\kappa-\gamma)}{\gamma(1 - \gamma )} > 0.
\]
(Here and below $\bar{C} > 0$ is a large constant that may change from line to line; it depends only on $\kappa$ and $\gamma$.)
Similarly,
\[
{\rm II} -{\rm I} \geq \frac{ \epsilon }{1-\epsilon} C_{1,\kappa,\gamma} x_n^{\kappa - \gamma}( 1 - \bar{C}  \epsilon x_n^{\kappa - \gamma} )
\]
with
\[
C_{1,\kappa,\gamma} := \frac{(\kappa - \gamma)(1 + \kappa - \gamma)}{\gamma(1 - \gamma)} > 0.
\]

Finally, we conclude.
From above,
\[
\frac{\alpha}{x_n} - \frac{\beta\gamma}{x_n} {\rm I} - \frac{\gamma-1}{x_n} {\rm II} \geq \frac{ \epsilon }{1-\epsilon}\frac{C_{0,\kappa,\gamma}}{\kappa} x_n^{\kappa - \gamma-1} ( (\beta \gamma + \kappa)(1-\gamma) - \bar{C} \epsilon).
\]
Then, considering \eqref{eqn: osc decay rhs contra} and choosing $\epsilon > 0$ sufficiently small depending only on $\kappa$ and $\gamma$, we find the inequality
\[
0 \geq \delta^{\kappa - \gamma-1} - \bar{C},
\]
which is impossible once $\delta > 0$ is sufficiently small, as desired.

Consequently,
\[
\psi \geq u_n \text{ in } \mathcal{C}_{1/2} \cap \{ x_n \geq 0\}.
\]
In particular,
\[
(1-\epsilon)x_n^\gamma \geq u_n \text{ along } \{ x' = 0 \}.
\]
Translating the barrier $\psi$ to any $z \in \mathcal{C}_{1/2} \cap \{ x_n = 0 \}$ and repeating the above argument, we find that
\[
(1-\epsilon)x_n^\gamma \geq u_n \text{ in } \mathcal{C}_{1/2} \cap \{ x_n \geq 0\}.
\]

If, on the other hand, the second inequality holds, up to dividing by $c_0$ (so that $c_0 = 1$), consider 
\[
\psi := (1+\epsilon)x_n^\gamma - c_1 \epsilon |x'|^2 + \epsilon x_n^\kappa.
\]
An analogous argument proves that
\[
(1+\epsilon)x_n^\gamma\leq u_n \text{ in } \mathcal{C}_{1/2} \cap \{ x_n \geq 0\}.
\] 

In summary, \eqref{eqn: osc decay at bdry} holds in Case 1.

\medskip
{\bf Case 2: $\alpha = \beta$.}
Setting $\kappa = 2$ from the start, and following the same line of reasoning proves this case.

\medskip
{\bf Case 3: $\alpha > \beta$.}
By duality, consider $v$ (the Legendre transform of $u$).
Reversing the roles of $\alpha$ and $\beta$, and applying the arguments of Case 1 and 2 proves this case.
\end{proof}

Iterating Proposition~\ref{prop: osc decay at bdry} (rescaling $\mathcal{C}_{1/2}$ to $\mathcal{C}_1$ leaves things unchanged), we find that $u_n/x_n^\gamma$ is H\"older continuous at the origin.
Translating this argument to other boundary points yields that $u_n/x_n^\gamma$ is locally H\"older continuous up to $\{x_n = 0 \}$.
In particular, we have the following corollary.

\begin{corollary}
\label{cor: C2eta in normal}
Let $u$ be convex and satisfy \eqref{eqn: global OT flat}.
A constant $\chi \in (0,1)$ exists, depending only on $\alpha$, $\beta$, and $n$, such that
\[
\bigg[ \frac{u_n}{x_n^\gamma}\bigg]_{C^{0,\chi}(\mathcal{C}_1 \cap \{ x_n \geq 0\})} \leq C(\alpha,\beta,n,\|u_n/x_n^\gamma\|_{L^\infty(\mathcal{C}_1 \cap \{ x_n \geq 0\})}).
\]
\end{corollary}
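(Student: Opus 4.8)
The plan is to deduce the estimate by \emph{iterating} the boundary oscillation--decay estimate of Proposition~\ref{prop: osc decay at bdry}, and then to upgrade the resulting pointwise control along $\{x_n=0\}$ to a genuine H\"older seminorm by combining it with the quantitative interior regularity of $\phi:=u_n/x_n^\gamma$ in the anisotropic geometry of the cylinders $\mathcal{C}_r$. The elementary structural input is that $\phi$ is invariant under the rescalings \eqref{eqn: rescalings}: a direct computation gives $\phi_{u_t}(x)=\phi_u(D_tx)$, while $D_t\mathcal{C}_r(0)=\mathcal{C}_{tr}(0)$ and $\|\phi_{u_t}\|_{L^\infty(\mathcal{C}_1\cap\{x_n\ge0\})}\le\|\phi_u\|_{L^\infty(\mathcal{C}_1\cap\{x_n\ge0\})}$ for every $t\le1$. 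Since $u_t$ again satisfies \eqref{eqn: global OT flat}, Proposition~\ref{prop: osc decay at bdry} applies to each $u_t$ with one and the same constant $\zeta\in(0,1)$.

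First I would fix a boundary point $z\in\mathcal{C}_{1/2}\cap\{x_n=0\}$, translate it to the origin, and apply Proposition~\ref{prop: osc decay at bdry} successively to the rescaled potentials $u_{2^{-k}}$, $k=0,1,2,\dots$; using the scaling identities above this gives
\[
\osc_{\mathcal{C}_{2^{-(k+1)}}(z)\cap\{x_n\ge0\}}\phi\le(1-\zeta)\,\osc_{\mathcal{C}_{2^{-k}}(z)\cap\{x_n\ge0\}}\phi ,
\]
hence, with $\chi_0:=\min\{\tfrac12,\log_2\tfrac{1}{1-\zeta}\}\in(0,1)$,
\[
\osc_{\mathcal{C}_r(z)\cap\{x_n\ge0\}}\phi\le C\,\|\phi\|_{L^\infty(\mathcal{C}_1\cap\{x_n\ge0\})}\,r^{\chi_0}\qquad\text{for all }r\le r_1 ,
\]
uniformly in $z$. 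This already yields H\"older continuity of $\phi$ \emph{at} each boundary point.

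To capture the full seminorm I would add the interior estimate. In $\{x_n>0\}$ the potential $u$ is smooth and strictly convex, and $\phi:=u_n/x_n^\gamma$ solves the linear equation
\[
u^{ij}\phi_{ij}+\frac{\beta x_n\delta^{in}+(1+\gamma)u_nu^{in}}{x_nu_n}\,\phi_i=0
\]
displayed before the statement. The two--sided bound $c_0\le\phi\le C_0$ of Lemma~\ref{lem: pure normal comp to 1} forces $\det D^2u=x_n^\alpha u_n^{-\beta}\sim x_n^{\gamma-1}$, which, together with the tangential bounds $(C')^{-1}\Id'\le D_{x'}^2u\le C'\Id'$ from Proposition~\ref{prop: pogo tang}, makes the operator in this equation \emph{uniformly} elliptic with bounded lower--order coefficients once read in the coordinates normalising $\mathcal{C}_\rho(z)$. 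The Harnack inequality then produces an interior oscillation decay $\osc_{\mathcal{C}_\rho(z)}\phi\le C(\rho/\rho_0)^{\chi_1}\osc_{\mathcal{C}_{\rho_0}(z)}\phi$ for $z\in\{x_n>0\}\cap\mathcal{C}_{1/2}$ and $\rho\le\rho_0$ comparable to the intrinsic distance of $z$ to $\{x_n=0\}$, with $\chi_1\in(0,1)$ depending only on $\alpha,\beta,n$. A standard covering argument in the quasi--metric attached to $\{\mathcal{C}_r\}$ (after first replacing $u$ by a fixed rescaling) merges the boundary and interior decays into $\osc_{\mathcal{C}_r(z)\cap\{x_n\ge0\}}\phi\le C\|\phi\|_{L^\infty(\mathcal{C}_1\cap\{x_n\ge0\})}r^{\chi}$ for every $z\in\mathcal{C}_{1/2}\cap\{x_n\ge0\}$ and $r\le r_1$, with $\chi:=\min\{\chi_0,\chi_1\}$, and thereby the asserted bound on $[\phi]_{C^{0,\chi}(\mathcal{C}_1\cap\{x_n\ge0\})}$.

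The analytically substantial input, Proposition~\ref{prop: osc decay at bdry}, being in hand, I expect the main obstacle here to be bookkeeping rather than a new idea: one must carry the anisotropy of the cylinders $\mathcal{C}_r$ through the whole argument, and in particular verify that the linearised operator is uniformly elliptic \emph{at the correct scale} as $x_n\to0$ --- which is exactly the role of Lemma~\ref{lem: pure normal comp to 1} and Proposition~\ref{prop: pogo tang} --- and perform the interior--to--boundary gluing in the non--Euclidean geometry adapted to the rescalings $D_t$.
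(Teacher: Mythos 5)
Your proposal is correct and follows essentially the same route as the paper, which simply iterates Proposition~\ref{prop: osc decay at bdry} using the invariance of the equation under the rescalings \eqref{eqn: rescalings} and then translates to other boundary points; you merely spell out the interior Harnack estimate and the boundary--interior gluing that the paper leaves implicit.
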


From Corollary~\ref{cor: C2eta in normal}, we deduce Theorem~\ref{thm: liouville}.

\begin{proof}[Proof of Theorem~\ref{thm: liouville}]
By Proposition~\ref{prop: pogo tang}, duality, and Lemma~\ref{lem: pure normal comp to 1}, there exists an $M > 0$ such that
\[
\frac{1}{M}\Id' \leq D^2_{x'}u \leq M \Id' \text{ and } \frac{1}{M} \leq \frac{u_n}{x_n^\gamma} \leq M \text{ in } B'_1 \times \{ 0 \}. 
\]
After subtracting off the tangent plane to $u$ at any point in $\mathcal{C}_1 \cap \{ x_n = 0 \}$ and a translation, if we can show that 
\begin{equation}
\label{eqn: sections round Liouville}
\mathcal{C}_{1/R} \cap \{ x_n \geq 0\} \subset S = \{ \psi < 1 \} \subset \mathcal{C}_{R} \cap \{ x_n \geq 0\} \text{ with } \psi := u_t,
\end{equation}
for some $R = R(M) > 0$, then (since $u_n = 0$ on $\{ x_n = 0\}$)
\[
\bigg\| \frac{\partial_n u_t}{x_n^\gamma} \bigg\|_{L^\infty(\mathcal{C}_1 \cap \{x_n \geq 0\})} = \bigg\| \frac{\psi_n}{x_n^\gamma} \bigg\|_{L^\infty(\mathcal{C}_1 \cap \{x_n \geq 0\})}\leq C(M).
\]
(Recall that $u_t$ is the rescaling of $u$ defined in \eqref{eqn: rescalings}.)
Therefore, by Corollary~\ref{cor: C2eta in normal} and scaling,
\[
\bigg[ \frac{u_n}{x_n^\gamma}\bigg]_{C^{0,\chi}(\mathcal{C}_t \cap \{ x_n \geq 0\})} \leq \frac{C(M)}{\min\{ t^{\frac{\chi}{2}},  t^{\frac{\chi}{1+\gamma}} \}} \to 0 \text{ as } t \to \infty.
\]
Hence, $u_n/x_n^\gamma$ is constant in $\{ x_n \geq 0\}$.
In turn, $\det D^2_{x'}u$ is constant in $\{ x_n > 0 \}$. 
So, by J\"{o}rgens, Calabi, and Pogorelov's Liouville theorem (\cite{Calabi,Jorgens,P}),
\[
u(x) = P(x') + p x_n^{1+\gamma}
\]
for some uniformly convex quadratic polynomial $P$, proving the theorem.

Now we prove \eqref{eqn: sections round Liouville}.
By the arguments of Lemma~\ref{lem: pure normal comp to 1}, we find that the linear map $A$ that normalizes the pair $(\psi, S)$ is such that
\[
A = \diag(A',a) \text{ and } \frac{1}{C} \leq (\det A)^2a^{\alpha + \beta} \leq C,
\]
for some $C > 0$, depending only on $\alpha$, $\beta$, and $n$.
Let
\[
\tilde{\psi}(x) := \psi(A^{-1}x),
\]
so that 
\begin{equation}
\label{eqn: normalized section and image Liouville}
B_1(0) \subset \tilde{S} := A(S) \subset B_{n^{3/2}}(0) \text{ and } \nabla \tilde{\psi}(\tilde{S}) \subset B_{1/r}(0),
\end{equation}
for some $r > 0$, depending only on $\alpha$, $\beta$, and $n$.
The inclusion concerning the gradient of $\tilde{\psi}$ follows from Corollary~\ref{cor: sections are ellipsoids} and Corollary~\ref{cor: equivalence of sections}.
So, by Proposition~\ref{prop: pogo tang} and duality,
\[
\frac{1}{\tilde{M}}\Id' \leq D^2_{x'}\tilde{\psi}(0) \leq \tilde{M} \Id'.
\]
Furthermore, by Lemma~\ref{lem: pure normal comp to 1},
\[
\frac{1}{C(\tilde{M})} \leq \frac{\tilde{\psi}_n}{x_n^\gamma}(0) \leq C(\tilde{M}).
\]
Here and above $\tilde{M} > 0$ and $C(\tilde{M}) > 0$ depend only on $\alpha$, $\beta$, and $n$.
It follows that
\[
\frac{1}{M\tilde{M}}\Id' \leq (A')^tA' \leq M\tilde{M} \Id' \text{ and } \frac{1}{MC(\tilde{M})} \leq a \leq MC(\tilde{M}).
\]
These inequalities, by \cite[Lemma~A.4]{Fbook}, imply that 
\[
| A |, |A^{-1}| \leq C(M).
\]
In turn, from \eqref{eqn: normalized section and image Liouville} we deduce \eqref{eqn: sections round Liouville}, as desired.
\end{proof}

\section{Flat Implies Smooth}
\label{sec: Flat Implies Smooth}

In this section, we prove a pointwise ``flat implies smooth'' result.
Before doing so, we reintroduce and introduce some notation essential to the statements and proofs of this section.

Recall, 
\[
\mathcal{C}_r(z) = B'_{r^{1/2}}(z') \times (z_n - r^\frac{1}{1+\gamma}, z_n + r^\frac{1}{1+\gamma}) \text{ and } \mathcal{C}_r = \mathcal{C}_r(0).
\] 

Let
\[
\overline{\lambda} := \lambda + \frac{2}{1+\gamma} = 1 + \lambda + \frac{1-\gamma}{1+\gamma} \text{ and } \underline{\lambda} := \lambda + \frac{2\gamma}{1+\gamma} = 1 + \lambda - \frac{1-\gamma}{1+\gamma}.
\]
Also, let $X$ be an open set whose boundary in $\mathcal{C}_1$ is defined by be a nonnegative function $\Gamma_X = \Gamma_X(x')$ on $\R^{n-1}$:
\[
X \cap \mathcal{C}_1= \{ x_n > \Gamma_X(x') \} \cap \mathcal{C}_1 \text{ and } \partial X \cap \mathcal{C}_1 =  \{ x_n = \Gamma_X(x') \} \cap \mathcal{C}_1.
\] 

Set
\[
U(x) := \frac{|x'|^2}{2} + \gamma^{\frac{\beta}{1+\beta}}\frac{x_n^{1+\gamma}}{(1+\gamma)\gamma}.
\]
Finally, define
\[
\mathcal{C}^\ast_r(z) := B'_{r^{1/2}}(z') \times (z_n - r^\frac{\gamma}{1+\gamma}, z_n + r^\frac{\gamma}{1+\gamma}) \text{ and } \mathcal{C}^\ast_r := \mathcal{C}^\ast_r(0).
\]

\begin{proposition}
\label{prop: C2lambda pointwise}
Assume that $X$ is convex with $0 \in \partial X$.
Let $Y \subset \{ y_n > 0 \}$ be an open set with $0 \in \partial Y$, and assume further that
\begin{equation}
\label{eqn: bdrys geometrically flat}
0 \leq x_n \leq \delta\epsilon|x'|^{\overline{\lambda}} \text{ on } \partial X \cap \mathcal{C}_2 \text{ and } 0 \leq y_n \leq \delta\epsilon |y'|^{\underline{\lambda}} \text{ on } \partial Y \cap \mathcal{C}_{1/\rho}^\ast.
\end{equation}
Suppose that $u \in C^1(\overline{X} \cap \mathcal{C}_2)$ is a convex function such that
\begin{equation}
\label{eqn: tanget at 0 is 0}
u(0) = 0 = |\nabla u(0)|,
\end{equation}
$u_n \geq 0$, and
\[
\nabla u(\partial X \cap \mathcal{C}_1) \subset \partial Y \cap \mathcal{C}_{1/\rho}^\ast.
\]
In $X \cap \mathcal{C}_1$, assume that
\[
\frac{1 - \delta\epsilon|x|^\mu}{1 + \delta\epsilon|\nabla u|^\omega }\frac{(x_n - 2\delta\epsilon|x'|^{\overline{\lambda}})_+^\alpha}{u_n^\beta} \leq \det D^2 u \leq \frac{1 + \delta\epsilon|x|^\mu }{1- \delta\epsilon|\nabla u|^\omega }\frac{x_n^\alpha}{(u_n - 2 \delta\epsilon|\nabla_{x'} u|^{\underline{\lambda}})_+^\beta}.
\]
Furthermore, suppose that
\[
|u - U| \leq \epsilon \text{ in } X \cap \mathcal{C}_2.
\]
If $\delta, \epsilon, \rho > 0$ are sufficiently small, then
\[
|u(Rx) - U| \leq C\epsilon(|x'|^2 + x_n^{1+\gamma})^{1 + \frac{\lambda}{2}}
\]
for some $R = \diag(R',r_n)$.
\end{proposition}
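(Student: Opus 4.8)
The plan is a compactness (improvement-of-flatness) argument: one normalizes the deviation of $u$ from the model $U$, extracts a limiting solution of the linearized problem, and absorbs its leading part into an admissible change of coordinates $R$. \emph{Step 1 (compactness).} Suppose the statement is false with a universal constant $C$ to be fixed below. Then there are $\delta_j,\epsilon_j,\rho_j\to 0$, open sets $X_j,Y_j$, and convex $u_j$ satisfying all the hypotheses with these parameters, but for which the asserted estimate fails for every diagonal $R$. Set $w_j := (u_j-U)/\epsilon_j$ on $X_j\cap\mathcal{C}_2$, so $\|w_j\|_{L^\infty(X_j\cap\mathcal{C}_2)}\le 1$. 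Since $\partial X_j$ and $\partial Y_j$ are trapped between the flat boundaries and the graphs $\delta_j\epsilon_j|x'|^{\overline{\lambda}}$, $\delta_j\epsilon_j|y'|^{\underline{\lambda}}$, and the Monge--Amp\`ere right-hand side for $u_j$ converges to the model one $x_n^\alpha/U_n^\beta$, the uniform a priori estimates of Sections~\ref{sec: Caff Bdry Reg Revisit}--\ref{sec: The Flat Case} apply: Caffarelli's interior $C^{2,\sigma}$ estimates away from $\{x_n=0\}$, the tangential Pogorelov bound of Proposition~\ref{prop: pogo tang}, the comparability $\partial_n u_j\sim x_n^\gamma$ of Lemma~\ref{lem: pure normal comp to 1}, and the H\"older bound on $\partial_n u_j/x_n^\gamma$ of Corollary~\ref{cor: C2eta in normal}. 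Hence $u_j\to U$ in $C^1_{\mathrm{loc}}(\{x_n\ge 0\}\cap\mathcal{C}_{3/2})$, and along a subsequence $w_j\to w$ locally uniformly, with $w$ bounded and solving the linearized model equation
\[
U^{ij}w_{ij}=-\beta\frac{w_n}{U_n},\qquad\text{i.e.}\qquad \Delta_{x'}w+\gamma^{-\frac{\beta}{1+\beta}}x_n^{1-\gamma}w_{nn}+\beta\gamma^{1-\frac{\beta}{1+\beta}}x_n^{-\gamma}w_n=0\quad\text{in }\mathcal{C}_{3/2}\cap\{x_n>0\},
\]
up to $\{x_n=0\}$, where the normalizations $u_j(0)=0=|\nabla u_j(0)|$ give $w(0)=0$ and $\nabla_{x'}w(0)=0$, and $\nabla u_j(\partial X_j\cap\mathcal{C}_1)\subset\partial Y_j$ together with the boundary flatness give the homogeneous oblique/Neumann-type condition for $w$ on $\{x_n=0\}$ inherited from the transport boundary condition (recall $U_n=0$ and $\partial_n u_j\sim x_n^\gamma$ there).

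\emph{Step 2 (regularity for the linearized problem).} The heart of the matter is to prove that every bounded solution $w$ of this linearized boundary value problem with $w(0)=0=\nabla_{x'}w(0)$ obeys
\[
|w(x)-\ell(x)|\le C_0\,(|x'|^2+x_n^{1+\gamma})^{1+\frac{\lambda}{2}}\qquad\text{in }\mathcal{C}_1\cap\{x_n\ge 0\},
\]
where $\ell$ lies in the finite-dimensional space of solutions of the linearized problem of $U$-homogeneity at most $1$ that generate the family of admissible coordinate changes: the diagonal maps $x'\mapsto R'x'$, $x_n\mapsto r_n x_n$ satisfying the single relation $(\det R)^2 r_n^{\alpha+\beta}=1$, which preserve both $\det D^2 U=x_n^\alpha/U_n^\beta$ and the transport boundary condition. (The constant and tangential-linear solutions, which also have homogeneity $\le 1$, are killed by $w(0)=0=\nabla_{x'}w(0)$.) This is itself a linear compactness/iteration statement: separating the tangential Laplacian from the one-dimensional degenerate operator in $x_n$ and classifying the homogeneities of solutions, one uses the Liouville Theorem~\ref{thm: liouville} (linearized) to identify the homogeneity-$\le 1$ solutions as exactly the space above. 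The admissible range $0<\lambda\le\min\{\alpha\frac{2}{1+\gamma},\frac{2}{1+\gamma},\beta\}$ (and the analogous bound when $\alpha<\beta$) is precisely what yields the needed spectral gap --- the next homogeneity of a solution strictly exceeds $1+\frac{\lambda}{2}$ --- and simultaneously guarantees that the boundary corrections $\delta\epsilon|x'|^{\overline{\lambda}}$ and $\delta\epsilon|y'|^{\underline{\lambda}}$, with $\overline{\lambda}=\lambda+\frac{2}{1+\gamma}$ and $\underline{\lambda}=\lambda+\frac{2\gamma}{1+\gamma}$, sit at higher order in the $U$-scale and do not enter at level $1+\frac{\lambda}{2}$.

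\emph{Step 3 (absorbing $\ell$ and conclusion).} By Step~2, $\ell(x)=\frac{d}{ds}\big|_{s=0}U(R^{(s)}x)$ for a one-parameter family $R^{(s)}$ of admissible diagonal maps; put $R_j:=R^{(\epsilon_j)}=\diag(R'_j,r_{n,j})$, a perturbation of the identity of size $O(\epsilon_j)$, so that $U(R_j x)=U(x)+\epsilon_j\ell(x)+O(\epsilon_j^2)(|x'|^2+x_n^{1+\gamma})$ and $R_j$ preserves the model equation up to $O(\epsilon_j^2)$. Then
\[
u_j(R_j x)-U(x)=\big(u_j(R_j x)-U(R_j x)\big)+\big(U(R_j x)-U(x)\big)=\epsilon_j\big(w_j(R_j x)-\ell(x)\big)+o(\epsilon_j)\,(|x'|^2+x_n^{1+\gamma})^{1+\frac{\lambda}{2}},
\]
and, since $w_j\to w$ with $|w-\ell|\le C_0(|x'|^2+x_n^{1+\gamma})^{1+\lambda/2}$ and $R_j\mathcal{C}_1\subset\mathcal{C}_2$ for $j$ large, the right-hand side is $\le C\epsilon_j(|x'|^2+x_n^{1+\gamma})^{1+\lambda/2}$ for $j$ large with $C:=2C_0$, contradicting the choice of $u_j$. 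The main obstacle is Step~2 --- the sharp Liouville-type classification and Schauder estimate for the degenerate linearized operator together with its transport boundary condition, which is where the precise restrictions on $\lambda$ are used; Steps~1 and 3 are, granted the uniform estimates of Sections~\ref{sec: Caff Bdry Reg Revisit}--\ref{sec: The Flat Case}, essentially routine bookkeeping.
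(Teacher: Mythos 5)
There is a genuine gap, and it sits exactly where you declare the argument ``essentially routine bookkeeping'': a single compactness step at unit scale cannot produce the conclusion, because the conclusion is a statement at \emph{all} scales. The convergence $w_j\to w$ you extract is only locally uniform, so for fixed large $j$ all you can assert is $|w_j-\ell|\le C_0(|x'|^2+x_n^{1+\gamma})^{1+\lambda/2}+\eta_j$ with $\eta_j\to 0$ \emph{independent of} $x$. Near the origin, where $(|x'|^2+x_n^{1+\gamma})^{1+\lambda/2}\ll\eta_j$, this additive error is not dominated by the right-hand side, so the chain of inequalities in your Step~3 does not give $|u_j(R_jx)-U(x)|\le C\epsilon_j(|x'|^2+x_n^{1+\gamma})^{1+\lambda/2}$ down to $x=0$, and no contradiction is reached. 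This is precisely why the paper does not argue this way: it proves an improvement-of-flatness statement at a \emph{single} scale $h_0$ (Lemma~\ref{lem: iteration step}, whose proof is the compactness/linearization you sketch, with the expansion of $w$ coming from \cite{DS} and the quadratic part absorbed into a map $Q=\diag(Q',q_n)$ with $q_n^{\alpha+\beta}(\det Q)^2=1$), and then \emph{iterates} it over the scales $h_0^k$, checking at each step that the rescaled boundary flatness, the Monge--Amp\`ere inequalities, and the smallness of the tangential slopes $\tau_k'$ are reproduced (this is where $\overline{\lambda}=\lambda+\tfrac{2}{1+\gamma}$ and $\underline{\lambda}=\lambda+\tfrac{2\gamma}{1+\gamma}$ are actually used), before passing to the limit $R=\lim R_k$. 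An estimate valid at every scale has to be assembled scale by scale; your proposal skips that mechanism entirely.

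Two further points. First, your Step~2 is not only unproven but mislocated: the paper needs no Liouville-type classification or ``spectral gap'' for the linearized Grushin operator --- it only uses the $C^{1+\gamma}$ regularity and pointwise expansion of $w$ from \cite{DS} at the fixed scale $h_0$, and the only restriction this imposes is $\lambda<1$; the constraints $\lambda\le\min\{\alpha\tfrac{2}{1+\gamma},\tfrac{2}{1+\gamma},\beta\}$ come from the H\"older exponents of the normalized densities and the $C^{1,1}$ boundary geometry (Section~\ref{sec: 2D}), not from homogeneities of the linearized problem. Second, you kill the linear solutions by asserting $\nabla_{x'}w(0)=0$ in the limit, but locally uniform convergence of $w_j$ does not transmit $\nabla_{x'}w_j(0)=0$ to $w$; convexity only bounds the tangential slope of $u_j$ relative to $U$ by $C\epsilon_j^{1/2}$, which after division by $\epsilon_j$ is unbounded. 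The paper instead carries the slopes $\tau_k'$ through the iteration as unknowns, absorbing the linear part $p'$ of $w$ at each step, and only at the very end uses $\nabla u(0)=0$ to conclude $(R')^tr'=0$. Your overall linearization picture is the right one, but without the one-scale lemma plus induction (and the slope bookkeeping) the proof does not go through.
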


Our proposition will follow from an iteration of an improvement of flatness lemma.
In order to state it, however, we need to define one more object:
\[
U_{\tau'}(x) := U(x) + \tau' \cdot x' \text{ so that } U_0 = U.
\]
Also, recall,
\[
D_h := \diag(h^{\frac{1}{2}}\Id', h^{\frac{1}{1+\gamma}}).
\]

\begin{lemma}
\label{lem: iteration step}
Let $0 \in \partial X$ and assume that
\begin{equation}
\label{eqn: boundaries are flat}
0 \leq x_n \leq  \delta \epsilon \text{ on } \partial X \cap \mathcal{C}_1.
\end{equation}
Furthermore, let $u \in C^1(\overline{X} \cap \mathcal{C}_1)$ be convex and such that
\begin{equation}
\label{eqn: u at 0 is 0}
u(0) = 0,
\end{equation}
$u_n \geq 0$, and
\begin{equation}
\label{eqn: bdry to bdry}
0 \leq u_n \leq  \delta \epsilon \text{ on } \partial X \cap \mathcal{C}_1.
\end{equation}
Suppose that
\[
\frac{1 - \delta\epsilon }{1 + \delta\epsilon }\frac{(x_n - \delta \epsilon)_+^\alpha}{u_n^\beta} \leq \det D^2 u \leq \frac{1 + \delta\epsilon }{1- \delta\epsilon }\frac{x_n^\alpha}{(u_n - \delta \epsilon)_+^\beta}  \text{ in } X \cap \mathcal{C}_1.
\]
For any $0 < \lambda < 1$, there exist constants $\delta_0 > 0$, $\epsilon_0 > 0$, and $h_0 > 0$ such that the following holds: if
\[
| u - U_{\tau'} | \leq \epsilon \text{ in } X \cap \mathcal{C}_1,
\]
then
\[
|\tilde{u} - U_{\tilde{\tau}'} | \leq h_0^{\frac{\lambda}{2}} \epsilon \text{ in } \tilde{X} \cap \mathcal{C}_2
\]
provided $0 < \delta \leq \delta_0$ and $0 < \epsilon \leq \epsilon_0$.
Here
\[
\tilde{u}(x) := \frac{u(QD_{h_0}x)}{h_0},\, \tilde{\tau}' := \frac{(Q'D_{h_0}')^t(\tau' + q')}{h_0},\, \text{and } \tilde{X} := (QD_{h_0})^{-1}X
\]
for some $Q$ and $q'$ such that
\[
Q = \diag(Q',q_n) \text{ and } |Q - \Id|, |q'| \leq C_0 \epsilon
\]
and
\begin{equation}
\label{eqn: det almost 1}
q_n^{\alpha+\beta}(\det Q)^2 = 1.
\end{equation}
\end{lemma}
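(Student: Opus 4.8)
The plan is a standard compactness/improvement-of-flatness argument, adapted to the anisotropic cylinders $\mathcal{C}_r$ and the degenerate right-hand side $x_n^\alpha/u_n^\beta$. Suppose the conclusion fails. Then there exist $0 < \lambda < 1$ and sequences $\delta_k, \epsilon_k \to 0$, convex domains $X_k$ with $0 \in \partial X_k$ satisfying \eqref{eqn: boundaries are flat} with $\delta_k\epsilon_k$, convex functions $u_k \in C^1(\overline{X_k} \cap \mathcal{C}_1)$ with $u_k(0) = 0$, $(u_k)_n \geq 0$, \eqref{eqn: bdry to bdry}, the Monge--Amp\`ere pinching with $\delta_k\epsilon_k$, and $|u_k - U_{\tau_k'}| \leq \epsilon_k$ in $X_k \cap \mathcal{C}_1$, yet for which no admissible choice of $Q$, $q'$ (with $|Q-\Id|,|q'| \leq C_0\epsilon_k$ and \eqref{eqn: det almost 1}) produces the desired improvement at any fixed small scale $h_0$. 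First I would note that $\tau_k' = \nabla_{x'} u_k(0) + o(1)$ is bounded (indeed $\to 0$ by \eqref{eqn: u at 0 is 0} together with the flatness and the bound on $u_n$ on $\partial X_k$), so after passing to a subsequence $\tau_k' \to \tau_\infty'$, and $\Gamma_{X_k} \to 0$ uniformly on $B_1'$, i.e.\ $X_k \cap \mathcal{C}_1 \to \{x_n > 0\} \cap \mathcal{C}_1$ in Hausdorff distance.

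The core step is to show that the rescaled error $w_k := (u_k - U_{\tau_k'})/\epsilon_k$ converges, along a subsequence and locally uniformly in $\{x_n \geq 0\} \cap \mathcal{C}_1$, to a limit $w$ which solves the linearized equation
\[
\mathcal{L} w := U^{ij} w_{ij} + \beta \frac{w_n}{U_n} = 0 \quad \text{in } \{x_n > 0\} \cap \mathcal{C}_1,
\]
where $U^{ij}$ is the cofactor matrix of $D^2 U$ (divided by $\det D^2 U$), together with the Neumann-type condition $w_n = 0$ on $\{x_n = 0\}$ coming from the boundary hypotheses \eqref{eqn: boundaries are flat} and \eqref{eqn: bdry to bdry} after dividing by $\epsilon_k$. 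To get this I would subtract the equations for $u_k$ and $U_{\tau_k'}$, write $\det D^2 u_k - \det D^2 U = \int_0^1 \frac{d}{dt}\det D^2(U + t(u_k - U))\,dt = (\text{cofactor matrix of an interpolant}) : D^2(u_k - U)$, divide by $\epsilon_k$, and use the interior regularity theory for the Monge--Amp\`ere equation (Caffarelli's $C^{2,\alpha}$ estimates away from $\{x_n=0\}$, valid since $U$ is smooth and uniformly convex in $x'$ there and $\gamma$ controls the degeneracy) plus Proposition~\ref{prop: pogo tang}, Lemma~\ref{lem: pure normal comp to 1}, and Corollary~\ref{cor: C2eta in normal} to get uniform-in-$k$ interior $C^{2,\chi}$ bounds in $\{x_n \geq \eta\}$ and uniform control of $w_k$, $\partial_n w_k/x_n^{\gamma-1}$ up to $\{x_n = 0\}$; the error terms $\delta_k\epsilon_k|x|^\mu$, $\delta_k\epsilon_k|\nabla u_k|^\omega$, $\delta_k\epsilon_k|x'|^{\overline\lambda}$, etc., are $o(\epsilon_k)$ and drop out in the limit. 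The key technical point is that the degeneracy at $\{x_n=0\}$ is exactly the one handled by the tangential Pogorelov estimate and the normal oscillation decay, so $w$ inherits $C^{1,\chi}$-type regularity up to the flat boundary and the Neumann condition holds in the classical sense.

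Next I would invoke a regularity/Liouville-type estimate for $\mathcal{L}$: solutions $w$ of $\mathcal{L}w = 0$ with $w_n = 0$ on $\{x_n = 0\}$ admit a second-order Taylor-type expansion at $0$ of the form $w(x) = w(0) + p'\cdot x' + \frac12 P' x'\cdot x' + p_n x_n^{1+\gamma} + O((|x'|^2 + x_n^{1+\gamma})^{(2+\sigma)/2})$ for some $\sigma > 0$ — equivalently, after subtracting the quadratic-in-$x'$ plus $x_n^{1+\gamma}$ part, $w$ is $C^{2,\sigma}$ in the $\mathcal{C}_r$-geometry (this is the linear analogue of Theorem~\ref{thm: liouville} and follows from Schauder estimates for $\mathcal{L}$ in the anisotropic cylinders, using the same even-reflection trick as in Proposition~\ref{prop: pogo tang}). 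In particular, choosing $h_0$ small, we can find a linear function $q'\cdot x'$ and a symmetric matrix correction so that $\|w - (\text{correction})\|_{L^\infty(\mathcal{C}_{h_0})} \leq \frac12 h_0^{(1+\lambda)/2}$ for, say, $\lambda < \sigma$. Translating the correction back: the correction to $w_k$ corresponds at the level of $u_k$ to replacing $U_{\tau_k'}$ by $U_{\tilde\tau'}$ composed with a linear map $Q = \Id + \epsilon_k(\text{correction}) = \diag(Q',q_n)$ (the block-diagonal structure is forced because $U$ splits as $\frac12|x'|^2$ plus a function of $x_n$, and the linearized correction respects this splitting), after which \eqref{eqn: det almost 1} is imposed to re-normalize the constant in the Monge--Amp\`ere equation — this changes $Q$ by a further $O(\epsilon_k^2)$. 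Rescaling by $QD_{h_0}$ and dividing by $h_0$ then gives $|\tilde u_k - U_{\tilde\tau'}| \leq h_0^{\lambda/2}\epsilon_k$ in $\tilde X_k \cap \mathcal{C}_2$ for $k$ large — provided we also check $\mathcal{C}_2 \cap \tilde X_k$ is contained in the domain $QD_{h_0}$ maps into $\mathcal{C}_1$, which holds since $h_0 \cdot 2 < 1$ and $|Q-\Id|$ is tiny — contradicting the choice of the sequence.

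The main obstacle I expect is the boundary behavior of the linearized equation $\mathcal{L}$ at $\{x_n = 0\}$: establishing that $w_k$ converges up to the flat boundary with enough regularity, and that the limit $w$ genuinely satisfies the Neumann condition $w_n = 0$ there (rather than merely a one-sided inequality), requires carefully propagating the degenerate estimates of Section~\ref{sec: The Flat Case} — Proposition~\ref{prop: pogo tang}, Lemma~\ref{lem: pure normal comp to 1}, Corollary~\ref{cor: C2eta in normal} — through the compactness limit uniformly in $k$, and handling the fact that the defining inequalities for $\det D^2 u_k$ are only one-sided pinchings with moving cut-offs $(x_n - \delta_k\epsilon_k)_+$ and $(u_n - \delta_k\epsilon_k)_+$. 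A secondary subtlety is the bookkeeping that keeps $Q$ block-diagonal and makes the normalization \eqref{eqn: det almost 1} compatible with $|Q - \Id| \leq C_0\epsilon$; this is routine but must be done with care so that the constant $C_0$ is universal and does not deteriorate under iteration.
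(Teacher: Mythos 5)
Your overall strategy is the same as the paper's (linearize around $U_{\tau'}$, pass to a solution $w$ of the degenerate Grushin-type equation with Neumann condition $w_n=0$, use its pointwise expansion at the origin to build the corrections $Q,q'$, renormalize to get \eqref{eqn: det almost 1}, and rescale), but the step you yourself flag as ``the main obstacle'' is precisely where the proof's real content lies, and you do not supply it. You propose to get uniform-in-$k$ control of $w_k=(u_k-U_{\tau_k'})/\epsilon_k$ up to $\{x_n=0\}$, and the Neumann condition in the limit, by ``propagating'' Proposition~\ref{prop: pogo tang}, Lemma~\ref{lem: pure normal comp to 1}, and Corollary~\ref{cor: C2eta in normal}. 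Those results are proved only for exact global solutions of the flat half-space problem \eqref{eqn: global OT flat}; they are not available in the setting of the lemma, where $\partial X$ is only $\delta\epsilon$-flat, the equation is a two-sided pinching with the shifted cutoffs $(x_n-\delta\epsilon)_+$ and $(u_n-\delta\epsilon)_+$, and the boundary condition is only $0\le u_n\le\delta\epsilon$ on $\partial X$. The paper replaces this appeal by two explicit constructions: a barrier argument (the functions $w_z=U_{\tau'}+\epsilon\phi_z$ and $w^z$, compared through the operators $F^{\pm}_{\delta,\epsilon}$) that gives a geometric oscillation decay for $u_\epsilon$ on dyadic cylinders centered on $\partial X$, hence compactness up to the boundary uniformly in $\delta,\epsilon$; and a viscosity-solution argument with test functions of the form $\tfrac{A}{2}|x'-z'|^2+B+2p\,x_n^{1-\beta\gamma}$ (treating $\beta\gamma\ge 1$ and $\beta\gamma<1$ separately) showing the limit satisfies $w_n=0$ on $\{x_n=0\}$ in the sense of \cite{DS}. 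Without some substitute for these, your compactness step does not go through, so the proposal has a genuine gap.

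Two secondary points. First, the boundary regularity you need for the limit $w$ (the expansion $w=p'\cdot x'+\tfrac12 P'x'\cdot x'+C_\gamma p_n x_n^{1+\gamma}+o$) is not obtained by an even-reflection-plus-Schauder argument: the drift $\beta\gamma\,w_n/x_n$ is singular and the coefficient $x_n^{\gamma-1}$ degenerate, so the paper invokes the De Silva--Savin theory for such operators rather than classical reflection. Second, your claim that imposing the normalization \eqref{eqn: det almost 1} only moves $Q$ by $O(\epsilon^2)$ hides the key algebraic fact that the linearized equation forces $p_n=-\trace P'/(1+\beta)$, equivalently $\tfrac{2+\alpha+\beta}{1+\gamma}p_n=-\trace P'$, so that $\tilde q_n^{\alpha+\beta}(\det\tilde Q)^2=1+O(\epsilon^2)$ already; an $O(\epsilon)$ mismatch here would be admissible for $|Q-\Id|\le C_0\epsilon$ but would destroy the improvement $h_0^{\lambda/2}\epsilon$ after rescaling, so this relation must be stated and used. (Also, your assertion that $\tau_k'\to 0$ is not implied by the lemma's hypotheses --- $\tau'$ can be arbitrary, e.g.\ $u=U_{\tau'}$ satisfies everything --- though this is harmless since $\tau_k'$ cancels in $w_k$.)
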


To prove our improvement of flatness lemma, we approximate $u - U_{\tau'}$ by a solution to the Grushin type equation with singular drift $L w = 0$ where $L$ is defined by
\[
Lw := \gamma^{\frac{\beta}{1+\beta}} x_n^{\gamma-1}\Delta_{x'}w + w_{nn} + \beta\gamma\frac{w_n}{x_n},
\] 
which with the Neumann condition $w_n = 0$ on $\{ x_n = 0 \}$ has a rather nice regularity theory (see \cite{DS}).

\begin{proof}
Let
\[
u_\epsilon := \frac{u - U_{\tau'} }{\epsilon}.
\]

\medskip
{\bf Step 1:} We show that $u_\epsilon$ is well-approximated near the origin by a solution $w$ to the linearized equation
\begin{equation}
\label{eqn: linearized equation}
\begin{cases}
Lw = 0 &\text{in }  \mathcal{C}_{1/4} \cap \{x_n > 0 \} \\
w_n = 0 &\text{on } \mathcal{C}_{1/4} \cap \{ x_n = 0 \}
\end{cases}
\end{equation}
in the viscosity sense.
In other words, for any $\eta > 0$, a solution $w$ to \eqref{eqn: linearized equation} in $\mathcal{C}_{1/4} \cap \{ x_n > 0\}$ exists such that
\[
|u_\epsilon - w| \leq \eta \text{ in } X \cap \mathcal{C}_{1/4}
\]
provided $\delta_0, \epsilon_0 > 0$ are sufficiently small, depending on $\eta$, $\rho$, $n$, $\alpha$, and $\beta$.

\smallskip
{\it Step 1.1:}
First, we derive the linearized equation.

To this end, by convexity, observe that if $x + t\e \in X \cap \mathcal{C}_1$, then
\[
(\nabla u(x) - \nabla U_{\tau'}(x) )\cdot t\e \leq 2\epsilon  + (\nabla U_{\tau'}(x + t\e) - \nabla U_{\tau'}(x) )\cdot t\e;
\]
in particular,
\[
|u_n(x) - U_n(x)| \leq 2\epsilon^{\frac{1}{2}} + C_\gamma\max\{\epsilon^{\frac{\gamma}{2}},\epsilon^{\frac{1}{2}}\}.
\]
Consequently, $u_n \to \partial_n U_{\tau'} = U_n$ uniformly in compact subsets of $X \cap \mathcal{C}_1$ as $\epsilon \to 0$.
Moreover,
\[
\det D^2 u - \det D^2 U_{\tau'} = \trace (A_\epsilon D^2 (u - U_{\tau'})),
\]
with
\[
A_\epsilon := \int_0^1 \cof ( D^2 U + t( D^2 u - D^2 U)) \, \d t.
\]
Since $\det^{1/n}$ is concave on symmetric, positive semi-definite $n \times n$ matrices,
\[
(\det A_\epsilon)^{1/n} \geq \int_{0}^1 (t(\det D^2u)^{1/n} + (1-t)(\det D^2U)^{1/n})\, \d t.
\]
Therefore, $(\det A_\epsilon)^{1/n}$ is strictly positive and bounded on compact subsets of $X \cap \mathcal{C}_1$.
Furthermore,
\[
\begin{split}
\frac{x_n^\alpha}{(u_n - \delta \epsilon)_+^\beta} - \frac{x_n^\alpha}{U_n^\beta} &= \frac{x_n^\alpha}{u_n^\beta} - \frac{x_n^\alpha}{U_n^\beta} + \frac{x_n^\alpha}{(u_n - \delta \epsilon)_+^\beta} - \frac{x_n^\alpha}{u_n^\beta} 
\\
&= - x_n^\alpha \int_0^1 \beta \frac{u_n - U_n}{(U_n + t(u_n - U_n))^{1+\beta}} \, \d t + \frac{x_n^\alpha}{(u_n - \delta \epsilon)_+^\beta} - \frac{x_n^\alpha}{u_n^\beta}.
\end{split}
\]
And so,
\[
\frac{1 + \delta\epsilon }{1-  \delta\epsilon }\frac{x_n^\alpha}{(u_n - \delta \epsilon)_+^\beta} - \frac{x_n^\alpha}{U_n^\beta} \leq b^+_{\epsilon} \partial_n (u - U_{\tau'}) + c^+_{\delta}\epsilon
\]
with
\[
b^+_{\epsilon} \to -\beta \frac{x_n^\alpha}{U_n^{1+\beta}} \text{ as } \epsilon \to 0 \text{ and } c^+_{\delta} \to 0 \text{ as } \delta \to 0
\]
locally uniformly in $X \cap \mathcal{C}_1$.
Similarly,
\[
\frac{1 -  \delta\epsilon }{1 +  \delta\epsilon }\frac{(x_n - \delta \epsilon)_+^\alpha}{u_n^\beta} - \frac{x_n^\alpha}{U_n^\beta} \geq b^-_{\epsilon} \partial_n (u - U_{\tau'}) + c^-_{\delta}\epsilon
\]
with
\[
b^-_{\epsilon} \to -\beta \frac{x_n^\alpha}{U_n^{1+\beta}} \text{ and } c^-_{\delta} \to 0
\]
uniformly in compact subsets of $X \cap \mathcal{C}_1$ as $\epsilon, \delta \to 0$, respectively.
In turn, by the ABP estimate and Schauder theory, $D^2 u \to D^2U_{\tau'} = D^2 U$ locally uniformly in $X \cap \mathcal{C}_1$ as $\epsilon \to 0$.
Hence,
\[
A_\epsilon \to \cof D^2 U = \diag(\gamma^{\frac{\beta}{1+\beta}}x_n^{\gamma-1} \Id',1)
\]
as $\epsilon \to 0$ uniformly in compact subsets of $X \cap \mathcal{C}_1$.

\smallskip
{\it Step 1.2:}
Second, we show that $u_\epsilon$ is uniformly H\"{o}lder continuous in an appropriate sense up to $\mathcal{C}_{1/4} \cap \{ x_n = 0 \}$ as $\delta, \epsilon \to 0$.

To start, we claim that two small constants $c_0 > 0$ and $\delta_0 > 0$ exist such that the following holds: for all $\delta \leq \delta_0$, if
\[
\osc_{X \cap \mathcal{C}_1}  u_\epsilon \leq 2,
\]
then
\[
\osc_{X \cap \mathcal{C}_{1/2}} u_\epsilon \leq 2(1-c_0).
\]

We prove this claim with a barrier argument.
For every $z \in \partial X \cap \mathcal{C}_{1/2}$, define
\[
\phi_{z}(x) := -1 + c_2\bigg(1 + 2C_1\gamma^{\frac{\beta}{1+\beta}}\frac{x_n^{1+\gamma}}{(1+\gamma)\gamma} + x_n - C_1\frac{|x'-z'|^2}{2}\bigg)
\]
and
\[
\phi^{z}(x) := 1 - c_2\bigg(1 + 2C_1\gamma^{\frac{\beta}{1+\beta}}\frac{(x_n - \delta \epsilon)_+^{1+\gamma}}{(1+\gamma)\gamma} - C_1\frac{|x'-z'|^2}{2}\bigg),
\]
for $C_1 \gg 1$ and $c_2 \ll 1$ with $c_2C_1 \ll 1$ to be chosen (uniformly in $z$).
Also, let
\[
F_{\delta,\epsilon}^{+}(D^2 \psi, \nabla \psi , x) := (1- \delta\epsilon )(\psi_n - \delta \epsilon)_+^\beta \det D^2 \psi - (1 +  \delta\epsilon)x_n^\alpha
\]
and
\[
F_{\delta,\epsilon}^{-}(D^2 \psi, \nabla \psi , x) := (1 + \delta\epsilon )\psi_n^\beta\det D^2 \psi  - (1 - \delta\epsilon )(x_n - \delta \epsilon)_+^\alpha. 
\]
Finally, define
\[
w_{z} := U_{\tau'} + \epsilon \phi_{z}
\]
and
\[
w^{z} := \frac{|x'|^2}{2} + \gamma^{\frac{\beta}{1+\beta}}\frac{(x_n - \delta \epsilon)_+^{1+\gamma}}{(1+\gamma)\gamma} + \epsilon \phi^{z} + \tau' \cdot x'.
\]

First, we show that if $\delta > 0$ is small enough (and $\epsilon < 1$), then 
\[
F_{\delta,\epsilon}^+(D^2 w_{z}, \nabla w_{z} , x) \geq 0 \geq F_{\delta,\epsilon}^+(D^2u, \nabla u , x) \text{ in } X \cap \mathcal{C}_1
\]
and
\[
F_{\delta,\epsilon}^-(D^2 w^{z}, \nabla w^{z} , x) \leq 0 \leq F_{\delta,\epsilon}^-(D^2u, \nabla u , x) \text{ in } X \cap \mathcal{C}_1.
\]
Indeed, if $\delta \leq c_2$ (recalling that $c_2C_1 \ll 1$ and $C_1 \gg 1$), then
\[
F_{\delta,\epsilon}^+(D^2 w_{z}, \nabla w_{z} , x) \geq [(1 - \delta \epsilon)(1 - c_2 C_1 \epsilon)(1 + c_2 2C_1 \epsilon) - (1 + \delta \epsilon)]x_n^\alpha \geq 0
\]
and
\[
F_{\delta,\epsilon}^-(D^2 w^{z}, \nabla w^{z} , x) \leq  [(1 + \delta \epsilon)(1 + c_2 C_1 \epsilon)(1 - c_2 2C_1 \epsilon) - (1 - \delta \epsilon) ](x_n - \delta \epsilon)_+^\alpha \leq 0.
\]

Second, we address the boundary data.
Note that
\[
\text{either } u_\epsilon(2^{-\frac{1}{1+\gamma}} \e_n) > 0 \text{ or } u_\epsilon(2^{-\frac{1}{1+\gamma}} \e_n) \leq 0.
\]
In the first case, we prove that
\[
u\geq w_{z} \text{ on } X \cap \partial \mathcal{C}_{1/8}(z)
\text{ and }
\partial_n w_{z} \geq \partial_n u \text{ on } \partial X \cap \mathcal{C}_{1/8}(z).
\]
While, in the second case, we prove that
\[
u \leq w^{z} \text{ on } X \cap \partial \mathcal{C}_{1/8}(z)
\text{ and }
\partial_n u \geq \partial_n w^{z} \text{ on } \partial X \cap \mathcal{C}_{1/8}(z).
\]
In the first case, fix some small distance $\bar{d} > \delta\epsilon$, depending only on $\gamma$.
If $C_1$ is sufficiently large, depending only on $\gamma$, then $\phi_{z} \leq -1$ on $\partial \mathcal{C}_{1/8}(z) \cap \{ x_n \leq \bar{d}  \}$.
Hence, $w_z \leq u$ here.
By the Harnack inequality, choosing $c_2 > 0$ small, we can ensure that $w_z \leq u$ on the remainder of $X \cap \partial \mathcal{C}_{1/8}(z)$.
Also, by \eqref{eqn: bdry to bdry}, once again if $\delta \leq c_2$,
\[
\partial_n w_{z} = U_n + \epsilon c_2 2C_1U_n + \epsilon c_2 \geq \epsilon c_2 \geq \partial_n u \text{ on } \partial X \cap \mathcal{C}_{1/8}(z).
\]
In the second case, a similar Harnack inequality argument, yields the required inequality along $X \cap \partial \mathcal{C}_{1/8}(z)$.
Also, by \eqref{eqn: boundaries are flat},
\[
\partial_n w^{z} =  \gamma^{-\frac{1}{1+\beta}}(1 - \epsilon c_2 2C_1)(x_n - \delta \epsilon)_+^\gamma = 0 \leq \partial_n u \text{ on } \partial X \cap \mathcal{C}_{1/8}(z).
\]

By the maximum principle then, one of the two inequalities
\[
w_{z} \leq u \leq w^{z} \text{ in } X \cap \mathcal{C}_{1/8}(z)
\]
holds for all $z \in \partial X \cap \mathcal{C}_{1/2}$, from which the claim follows with $c_0 = c_2/2$.

Iterating the claim, we find that if $\epsilon < 1/2^{k-1}$ and $\delta_0 > 0$ is small, then
\[
\osc_{X \cap \mathcal{C}_{2^{-k}}} u_\epsilon \leq 2(1-c_0)^k.
\]
Thus, after translating the above argument to any point $x_0 \in \partial X \cap \mathcal{C}_{1/4}$, we see that $u_\epsilon$ converges uniformly in $\mathcal{C}_{1/4} \cap \{ x_n \geq 0 \}$, as $\epsilon$ and $\delta$ tend to zero, to some function $w$ that solves
\[
Lw = \gamma^{\frac{\beta}{1+\beta}} x_n^{\gamma-1}\Delta_{x'}w + w_{nn} + \beta\gamma\frac{w_n}{x_n} = 0 \text{ in } \{ x_n > 0 \} \cap  \mathcal{C}_{1/4}
\] 
in the viscosity sense (and so, classically by elliptic theory).

\smallskip
{\it Step 1.3:}
Third, we show that the Neumann condition $w_n = 0$ is satisfied in the viscosity sense as defined in \cite[Definition 7.1]{DS} on $\mathcal{C}_{1/4} \cap \{ x_n = 0 \}$.

If $\beta \gamma \geq 1$, then $w_n = 0$ on $\{ x_n = 0 \}$ in the viscosity sense since $|w| \leq 1$. 
This bound is a consequence of Step 1.2.

When $\beta \gamma < 1$, we show that $w$ can neither be touched from above at any point on $\mathcal{C}_{1/4} \cap \{x_n = 0\}$ by any test function of the form
\[
\frac{A}{2}|x'-z'|^2 + B + 2p x_n^{1-\beta\gamma}
\]
where $z' \in \R^{n-1}$, $A,B \in \R$, and
\[
p < 0
\]
(making $w$ a viscosity subsolution) nor be touched from below at any point on $\mathcal{C}_{1/4} \cap \{x_n = 0\}$ by any test function of the form
\[
\frac{A}{2}|x'-z'|^2 + B + 2p x_n^{1-\beta\gamma}
\]
where $z' \in \R^{n-1}$, $A,B \in \R$, and 
\[
p > 0
\]
(making $w$ a viscosity supersolution).

Suppose, to the contrary, that $w$ can be touched from below at some $x_0 = (x_0',0) \in \mathcal{C}_{1/4} \cap \{x_n = 0\}$ by
\[
\frac{A}{2}|x'-z'|^2 + B + 2p x_n^{1-\beta\gamma}
\]
for some $z' \in \R^{n-1}$, $A,B \in \R$, and $p > 0$.
Since $\beta\gamma < 1$, we can touch $w$ at $x_0$ from below strictly by
\[
\phi(x) := \frac{A}{2}|x'-z'|^2 + B + \frac{\gamma^{\frac{\beta}{1+\beta}}}{(1+\gamma)\gamma} C x_n^{1+\gamma}+ p x_n
\]
with any $C \in \R$.
Since $u_\epsilon \to w$ uniformly as $\epsilon, \delta \to 0$,
\[
\Phi := U_{\tau'} + \epsilon(\phi + c_{\epsilon,\delta})
\]
touches $u$ from below strictly at some $x_\epsilon \in X$.
Arguing as in Step 1.2, we find that
\[
F^+_{\delta,\epsilon}(D^2 \Phi(x_\epsilon), \nabla \Phi(x_\epsilon),x_\epsilon) > 0
\]
provided $0 < \delta,\epsilon \ll 1$ and $C \gg 1$ (since $p > 0$).
But then
\[
0 \geq F^+_{\delta,\epsilon}(D^2 u(x_\epsilon), \nabla u(x_\epsilon) ,x_\epsilon) \geq F^+_{\delta,\epsilon}(D^2 \Phi(x_\epsilon), \nabla \Phi(x_\epsilon),x_\epsilon) > 0,
\]
which is impossible.
(The first inequality is an assumption on $u$, and the middle inequality holds since $\Phi$ touches $u$ from below.)
On the other hand, suppose, to the contrary, that $w$ can be touched from above at some $x_0 = (x_0',0) \in \mathcal{C}_{1/4} \cap \{x_n = 0\}$ by
\[
\frac{A}{2}|x'-z'|^2 + B + 2p x_n^{1-\beta\gamma}
\]
for some $z' \in \R^{n-1}$, $A,B \in \R$, and $p < 0$.
Since $\beta\gamma < 1$, we can touch $w$ at $x_0$ from above strictly by
\[
\phi(x) := \frac{A}{2}|x'-z'|^2 + B + \frac{\gamma^{\frac{\beta}{1+\beta}}}{(1+\gamma)\gamma} C(x_n - \delta \epsilon)_+^{1+\gamma} + p x_n
\]
with any $C \in \R$.
Since $u_\epsilon \to w$ uniformly as $\delta, \epsilon \to 0$,
\[
\Phi := \frac{|x'|^2}{2} + \gamma^{\frac{\beta}{1+\beta}}\frac{(x_n - \delta \epsilon)_+^{1+\gamma}}{(1+\gamma)\gamma} + \tau' \cdot x' + \epsilon(\phi + c_{\epsilon,\delta})
\]
touches $u$ from below strictly at some $x_\epsilon \in X$.
Arguing as in Step 1.2, we find that
\[
F^-_{\delta,\epsilon}(D^2 \Phi(x_\epsilon), \nabla \Phi(x_\epsilon),x_\epsilon) < 0
\]
provided $0 < \delta,\epsilon \ll 1$ and $C \ll -1$ (since $p < 0$).
But again this inequality is impossible.

\medskip
{\bf Step 2:}
Now we find the transformation $Q$ and prove \eqref{eqn: det almost 1}.

An application of the arguments of \cite[Section 7.1]{DS} yields that $w \in C^{1 + \gamma}_{\rm loc}(\mathcal{C}_{1/4} \cap \{ x_n \geq 0 \})$.
In particular, $w_n = 0$ is satisfied in the classical sense.
Moreover, $D^k_{x'} w \in C^{1 + \gamma}_{\rm loc}(\mathcal{C}_{1/4} \cap \{ x_n \geq 0 \})$.
Also, since $w$ is the limit of a sequence of functions that vanish at the origin (by \eqref{eqn: u at 0 is 0}),
\[
w(0) = 0.
\]
In turn, by Taylor's theorem and \eqref{eqn: linearized equation},
\[
w(x) = p' \cdot x' + \frac{1}{2}P' x' \cdot x' + C_\gamma p_n x_n^{1+\gamma} + O(|x'|^3 + x_n^{2 + 2\gamma} + |x'|x_n^{1+\gamma})
\]
where
\[
|p'|, |P'|, |p_n| \leq C;
\]
in particular,
\[
p_n = - \frac{\trace P'}{1+\beta} \text{ and } C_\gamma = \frac{\gamma^{\frac{\beta}{1+\beta}}}{(1+\gamma)\gamma}.
\]
It follows that, if $0 < \delta, \epsilon \ll 1$,
\[
\bigg| u - U_\tau -  \epsilon \bigg(p' \cdot x' + \frac{1}{2}P' x' \cdot x' + C_\gamma p_n  x_n^{1+\gamma}\bigg)\bigg| \leq \epsilon\eta + \epsilon C(|x'|^3 + x_n^{2 + 2\gamma} + |x'|x_n^{1+\gamma}).
\]
So
\[
|u( \tilde{Q}'x', \tilde{q}_n x_n) - U(x) - (\tau' + q') \cdot \tilde{Q}'x' | \leq \eta \epsilon+ C h^{\frac{3}{2}}\epsilon + C\epsilon^2 \text{ in } \tilde{Q}^{-1}(X \cap \mathcal{C}_{4h})
\]
with
\[
\tilde{Q} = \diag(\tilde{Q}',\tilde{q}_n) := ((\Id' + \epsilon P')^{-\frac{1}{2}}, (1+\epsilon p_n)^{-\frac{1}{1+\gamma}}) \text{ and } q' := \epsilon p'.
\]
In turn, if $\epsilon, \eta \leq h^{3/2}$,
\[
|u( \tilde{Q}'x', \tilde{q}_n x_n) - (\tau' + q') \cdot \tilde{Q}' x' - U(x)| \leq C h^{\frac{3}{2}} \epsilon \text{ in } (\tilde{Q}^{-1}X) \cap \mathcal{C}_{2h}.
\]
Now notice that
\[
\frac{2 + \alpha + \beta}{1+\gamma} p_n = - \frac{2 + \alpha + \beta}{1+\gamma} \frac{\trace  P'}{1+\beta} = - \trace  P'.
\]
And so,
\[
\begin{split}
\tilde{q}_n^{\alpha+\beta}(\det \tilde{Q})^{2} 
&= (1+\epsilon p_n)^{-\frac{2 + \alpha + \beta}{1+\gamma}} (\det (\Id' + \epsilon P'))^{-1}
\\
&
=  \Big(1 - \epsilon \tfrac{2 + \alpha + \beta}{1+\gamma} p_n + O(\epsilon^2)\Big) \Big(1 -  \epsilon \trace P' + O(\epsilon^2)\Big)
= 1 + O(\epsilon^2).
\end{split}
\]
Thus, we can find a $Q$ that satisfies \eqref{eqn: det almost 1} and is $\epsilon^2$ close to $\tilde{Q}$, i.e.,
\[
|\tilde{Q} - Q| \leq \tilde{C} \epsilon^2.
\]
It follows that
\[
|u( Q'x', q_n x_n) - (\tau' + q') \cdot Q' x' - U(x)| \leq C h^{\frac{3}{2}} \epsilon \text{ in } (Q^{-1}X) \cap \mathcal{C}_{2h},
\]
since $\epsilon \leq h^{3/2}$.
Therefore, taking $h > 0$ sufficiently small (depending on $\lambda < 1$), we find that
\[
\bigg|\frac{1}{h}u( QD_h x) - \frac{1}{h} (\tau' + q') \cdot Q'D_h'x' - U(x) \bigg| \leq \epsilon h^{\frac{\lambda}{2}} \text{ in } ((QD_h)^{-1}X) \cap \mathcal{C}_2,
\]
as desired.
\end{proof}

With Lemma~\ref{lem: iteration step} in hand, we prove our proposition.

\begin{proof}[Proof of Proposition~\ref{prop: C2lambda pointwise}]
First, we claim, by induction, that a sequence of matrices $R_k := \diag(R_k',r_{k,n})$ and vectors $\tau_k' \in \R^{n-1}$ exit such that the rescalings of $u$ at height $h_k = h_0^k$,
\[
u_k(x) := \frac{u(R_k D_k x)}{h_k} \text{ for }  x \in X_k := (R_kD_k)^{-1}X
\]
with
\[
D_k := \diag(h_k^{\frac{1}{2}}\Id', h_k^{\frac{1}{1+\gamma}})
\]
satisfy
\begin{equation}
\label{eqn: induction step close to poly}
|u_k - U_{\tau_k'}| \leq \epsilon_k := \epsilon h_k^{\frac{\lambda}{2}} \text{ in } X_k \cap \mathcal{C}_2
\end{equation}
provided 
\[
\delta = c \delta_0 \text{ and } \epsilon = \epsilon_0
\] 
for some small $c > 0$.
Moreover,
\begin{equation}
\label{eqn: induction step everything small}
r_{k,n}^{\alpha+\beta}(\det R_k)^2 = 1 
\text{ and } |R_k - R_{k-1}| \leq C\epsilon_{k-1}.
\end{equation}

The base case $k = 0$ holds by assumption, with $R_0 = \Id$ and $\tau_0' = 0$.

Now suppose the claim holds for some $k \in \N$.
Note that the second inequality in \eqref{eqn: induction step everything small} implies that
\[
|R_k - \Id| \leq C\epsilon.
\]
From \eqref{eqn: bdrys geometrically flat}, provided that $c > 0$ is sufficiently small, we see that
\[
0 \leq x_n \leq |r_{k,n}^{-1}||R'_k|^{\lambda + \frac{2}{1+\gamma}}\delta \epsilon_k |x'|^{\lambda + \frac{2}{1+\gamma}} \leq \delta_0 \epsilon_k \text{ on } \partial X_k \cap \mathcal{C}_2.
\]
Furthermore, since the segment between $0$ and $\epsilon_k^{1/2} (\tau_k'|\tau_k'|^{-1} + \delta_0\epsilon_k \e_n)$ lives inside $X_k \cap \mathcal{C}_2$ (by the above inequality on the height of $\partial X_k \cap \mathcal{C}_2$ and the convexity of $X$), the function $w_k = u_k +  \tau_k' \cdot x'$ is convex, and the second equality in \eqref{eqn: tanget at 0 is 0}, we deduce that
\[
|\tau_k'| \leq 2\epsilon_k^{\frac{1}{2}} + C_\gamma \max\{\epsilon_k^{\frac{\gamma}{2}}, \epsilon_k^{\frac{1}{2}}\}
\]
(cf., the beginning of Step 1.1 in the proof of Lemma~\ref{lem: iteration step}).
In particular, this shows that $\tau_k' \to 0$ as $k \to \infty$.
A similar argument, but also using that the family of slopes $\tau_k'$ is uniformly bounded, yields the inclusion
\[
\nabla u_k(X_k \cap \mathcal{C}_1) \subset Y_k \cap \mathcal{C}_{1/\rho}^\ast  \text{ with } Y_k := (R_kD_k)^{t}h_k^{-1}Y.
\]
By construction, the boundary of $X_k$ maps to the boundary of $Y_k$.
From \eqref{eqn: bdrys geometrically flat}, we also see that
\[
0 \leq y_n \leq |r_{k,n}||(R'_k)^{-1}|^{\lambda + \frac{2\gamma}{1+\gamma}} \delta\epsilon_k |y'|^{\lambda + \frac{2\gamma}{1+\gamma}} \leq \delta_0\epsilon_k \text{ on } \partial Y_k \cap \mathcal{C}_{1/\rho}^\ast.
\]
In turn, in $X_k \cap \mathcal{C}_1$, $u_n \geq 0$ and, using \eqref{eqn: induction step close to poly},
\[
\frac{1 - \delta_0\epsilon_k}{1 + \delta_0\epsilon_k }\frac{(x_n - \delta_0\epsilon_k)_+^\alpha}{(\partial_n u_k)^\beta} \leq \det D^2 u_k \leq \frac{1 + \delta_0\epsilon_k }{1- \delta_0\epsilon_k }\frac{x_n^\alpha}{(\partial_n u_k - \delta_0\epsilon_k)_+^\beta},
\]
taking $c > 0$ smaller if needed depending on $\rho > 0$.
Therefore, by Lemma~\ref{lem: iteration step},
\[
|\tilde{u}_k - U_{\tilde{\tau}_k'}| \leq \epsilon_kh_0^{\frac{\lambda}{2}} = \epsilon_{k+1} \text{ in } \tilde{X}_k \cap \mathcal{C}_2
\]
where
\[
\tilde{u}_k(x) := \frac{u_k(QD_{h_0}x)}{h_0} \text{ and } \tilde{X}_k := (QD_{h_0})^{-1}X_k
\]
for
\[
Q = \diag(Q',q_n) \text{ with } |Q - \Id|, |q'| \leq C_0 \epsilon_k \text{ and } q_n^{\alpha+\beta}(\det Q)^2 = 1.
\]
In other words, the inductive step holds taking
\[
u_{k+1} = \tilde{u}_k,\, R_{k+1} = R_kQ,\, \text{and } \tau_{k+1}' = \tilde{\tau}_k'.
\]
Indeed,
\[
r_{k+1,n}^{\alpha+\beta}(\det R_{k+1})^2 = q_n^{\alpha+\beta}(\det Q)^2 r_{k,n}^{\alpha+\beta}(\det R_k)^2 = 1,
\]
and 
\[
|R_{k+1} - R_k| \leq |Q - \Id||R_k| \leq 2C_0 \epsilon_k.
\]

Second, we find $R$ and conclude. 
By the inequality in \eqref{eqn: induction step everything small}, we see that $R_k$ converges to some $R$, as $k$ tends to infinity.
In particular,
\[
|R - R_k| \leq C \epsilon_k.
\]
So, after replacing $\epsilon_k$ with $C\epsilon_k$, we can replace $R_k$ by $R$ in \eqref{eqn: induction step close to poly}.
In particular, considering the inductive manner in which each $\tau_k'$ is produced, we have that
\[
|u(Rx) - r_k ' \cdot R' x' - U(x)| \leq C \epsilon h_k^{1+\frac{\lambda}{2}} \text{ in } (R^{-1}X) \cap \mathcal{C}_{h_k}
\]
with
\[
r_k' := \sum_{i=1}^{k} h^{\frac{i-1}{2}}(R_{i-1}')^{-t} q_i'.
\]
Here $q_i'$ is the linear part of the polynomial found at each application of Lemma~\ref{lem: iteration step}.
Hence, $r_k'$ converges to some $r'$, and 
\[
|r' - r_k'| \leq 2\epsilon h_k^{\frac{1}{2} + \frac{\lambda}{2}}.
\]
It follows that we can replace $r_k'$ with $r'$, as we replaced $R_k$ with $R$: for all $k \in \N$,
\[
|u(Rx) - r' \cdot R' x' - U(x)| \leq C \epsilon h_k^{1+\frac{\lambda}{2}} \text{ for all } x \in (R^{-1}X) \cap \mathcal{C}_{h_k}.
\]

Finally, we claim that $(R')^tr' = 0$, which concludes the proof.
Indeed, the inequality above tells us that the function $u(Rx) - r' \cdot R' x' - U(x)$ vanishes up to and including first order at the origin.
From \eqref{eqn: tanget at 0 is 0}, we know that $\nabla u(0) = 0 = \nabla U(0)$, forcing $(R')^tr' = 0$.
\end{proof}

\section{Proof of Theorem~\ref{thm: global C2 in plane}}
\label{sec: 2D}

The proof of Theorem~\ref{thm: global C2 in plane} has three steps. 
First, we prove a strict obliqueness estimate.
This key estimate allows us to find an affine transformation that aligns $\nu_{\partial X}(0)$ and $\nu_{\partial Y}(0)$, assuming $\nabla u(0) = 0$.
Second, after a rotation which prescribes the now aligned normals at the origin, we blow-up to the global flat setting of Section~\ref{sec: The Flat Case}.
Finally, we apply Proposition~\ref{prop: C2lambda pointwise} to find a pointwise expansion of $u$ at $0$.
Since, in this procedure, the origin was fixed arbitrarily, Theorem~\ref{thm: global C2 in plane} follows. 

\subsection{Strict Obliqueness}
\label{sec: Strict Obliqueness}
In this section, we prove our strict obliqueness estimate.

\begin{lemma}
\label{lem: strict obliqueness}
Let $X$ and $Y$ be open, bounded, and $C^1$ convex sets in $\R^2$. 
Suppose that $T_{min} = \nabla u$ is the optimal transport taking $f = ad_{\partial X}^\alpha$ to $g = bd_{\partial Y}^\beta$, where $a$ and $b$ are functions bounded away from zero and infinity in $X$ and $Y$ respectively, and $\max\{\alpha, \beta\} > 0$.
Then
\[
\nu_{\partial X}(x) \cdot \nu_{\partial Y}(\nabla u(x)) \geq \theta > 0\text{ for all } x \in \partial X,
\]
where $\theta$ depends only on the inner and outer diameters of $X$ and $Y$, $\alpha$, $\beta$, and the upper and lower bounds of $a$ and $b$.
\end{lemma}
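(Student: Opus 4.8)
The plan is to argue by contradiction and blow‑up, using the boundary–section machinery of Section~\ref{sec: Caff Bdry Reg Revisit} in place of the estimates available in the nondegenerate case. The case $\alpha=\beta=0$ is classical (Caffarelli), so assume $\max\{\alpha,\beta\}>0$. Suppose no such $\theta>0$ exists; then there are points $x_k\in\partial X$ with $\nu_{\partial X}(x_k)\cdot\nu_{\partial Y}(\nabla u(x_k))\to0$. Translating $x_k$ and $\nabla u(x_k)$ to the origin, subtracting the tangent plane to $u$ at $x_k$ (so that $u\ge0$, $u(0)=0$, $\nabla u(0)=0$), and rotating, I may assume $\nu_{\partial X}(0)=-\e_2$, whence $\nu_{\partial Y}(0)\to\pm\e_1$.

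I would then renormalize using the classical boundary sections. Since $f=ad_{\partial X}^\alpha$ and $g=bd_{\partial Y}^\beta$ are doubling with fixed doubling constants (Lemma~\ref{lem: doubling on ellipsoids} and the remark after it), all of Section~\ref{sec: Caff Bdry Reg Revisit} applies uniformly: $u$ is strictly convex in $\overline X$ (Corollary~\ref{cor: strict convexity}); classical and centered sections at boundary points are comparable (Corollary~\ref{cor: equivalence of sections}) and balanced (Corollary~\ref{cor: sections are ellipsoids}); the volume product satisfies $|S_h(x)\cap X|\,|\nabla u(S_h(x))|\sim h^n$ at $x\in\partial X$ (Corollary~\ref{cor: volume product}); the normalized diameter of $S_h(x)\cap X$ is bounded below (Proposition~\ref{prop: uni density}); $\nabla u$ maps sections to comparable dual sections (Corollary~\ref{cor: image of section is comp to dual section}); and $\nabla u\in C^\sigma$ with a fixed exponent and modulus (Theorem~\ref{thm: global Csigma}). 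Taking a suitable $h_k\to0$, let $A_k$ be the John normalization of $S_{h_k}(u,0)$; after a further rotation $\mathcal{R}_k$ (applied to both source and target) chosen so that $\mathcal{R}_kA_k\e_1\parallel\e_1$, form the rescalings $\tilde u_k$, $\tilde X_k:=\mathcal R_kA_k(X)$, and the dual rescaled target $\tilde Y_k$ as in Section~\ref{sec: Caff Bdry Reg Revisit}, normalizing so that $\tilde u_k(0)=0$ and $\nabla\tilde u_k(0)=0$. Because $\partial X$ is $C^1$ and $A_k$ affine, $\tilde X_k$ flattens to a half‑space, which after relabeling we take to be $\{x_2>0\}$. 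The one place where the degeneration $\nu_{\partial X}(0)\cdot\nu_{\partial Y}(0)\to0$ enters is the target side: the tangent line to $\partial Y$ at $0$ is $\{y\cdot\nu_{\partial Y}(0)=0\}\to\Span(\e_2)$, so the tangent line to $\partial\tilde Y_k$ at its boundary point over $0$ is $\Span(\mathcal R_kA_k^{-t}\e_2)$; since $\mathcal R_kA_k^{-t}\e_2\perp\mathcal R_kA_k\e_1\parallel\e_1$, this direction is $\Span(\e_2)$, and hence $\tilde Y_k$ flattens to a half‑space bounded by $\{y_1=0\}$. Passing to a subsequence, $\tilde u_k\to u_\infty$ in $C^1_{\mathrm{loc}}(\overline{\{x_2>0\}})$ (by the uniform $C^\sigma$ bound on the gradients), where $u_\infty$ is convex, $\ge0$, vanishes with its gradient at $0$, maps $\{x_2>0\}$ into a half‑space bounded by $\{y_1=0\}$ and $\{x_2=0\}$ into $\{y_1=0\}$ (boundary‑to‑boundary, which survives the limit by uniform continuity up to the boundary); moreover $u_\infty$ still obeys the strict convexity, balancedness, and volume‑product bounds, all of which are scale‑ and affine‑invariant. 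Finally, since $\nabla u_\infty(0)=0$ lies on $\{y_1=0\}$, the limiting target half‑space must be $\{y_1>0\}$ (the case $\{y_1<0\}$ is identical).

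It remains to contradict the existence of such $u_\infty$. From $\nabla u_\infty(\{x_2=0\})\subseteq\{y_1=0\}$ the first component $\partial_1u_\infty$ vanishes identically on $\{x_2=0\}$, so $u_\infty(\cdot,0)$ is constant, equal to $u_\infty(0)=0$; combined with $u_\infty\ge0$ this forces $\partial_2u_\infty(x_1,0)=\lim_{x_2\to0^+}u_\infty(x_1,x_2)/x_2\ge0$ for all $x_1$, so that $\nabla u_\infty(\{x_2=0\})\subseteq\{y_1=0,\ y_2\ge0\}$. On the other hand, fixing a small section $S=S_h(u_\infty,0)$, Corollary~\ref{cor: image of section is comp to dual section} gives $\nabla u_\infty(S)\supseteq S_{ch}(v_\infty,\nabla u_\infty(0))$ for the Legendre transform $v_\infty$ of $u_\infty$, and, this being a boundary section of $v_\infty$ at $0\in\partial\{y_1>0\}$, Proposition~\ref{prop: uni density} shows $S_{ch}(v_\infty,0)\cap\{y_1=0\}$ is a segment with $0$ in its relative interior; since boundary‑to‑boundary gives $(\nabla u_\infty)^{-1}(\{y_1=0\})\subseteq\{x_2=0\}$, this two‑sided segment lies in $\nabla u_\infty(\{x_2=0\})$, so $\partial_2u_\infty$ takes strictly negative values along $\{x_2=0\}$ — a contradiction. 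This produces the desired $\theta>0$, with the stated dependences, since the renormalization constants depend only on $n$, the doubling constants, and the inner and outer diameters.

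The main obstacle is the renormalization/compactness of the second paragraph: one must choose $h_k\to0$ and the affine maps $A_k$ so that the rescaled problems stay uniformly under control — here it is essential that normalization preserves the doubling constants, so that Theorem~\ref{thm: global Csigma} yields a uniform modulus (hence compactness), and that Proposition~\ref{prop: uni density} and Corollary~\ref{cor: volume product} are scale‑ and affine‑invariant (so the limiting sections stay nondegenerate) — while simultaneously arranging, via the $A_k$ versus $h_k^{-1}A_k^{-t}$ duality together with the obliqueness degeneration, that the rescaled source flattens to $\{x_2>0\}$ and the rescaled target to a \emph{perpendicular} half‑space $\{y_1>0\}$. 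Once this limit configuration is established, the rigidity argument of the third paragraph is soft.
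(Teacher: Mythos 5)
Your blow-up strategy has a genuine gap at its central step: the claim that, after the John normalization $A_k$ of the boundary sections $S_{h_k}(u,x_k)$, the rescaled source ``flattens to a half-space because $\partial X$ is $C^1$ and $A_k$ is affine,'' and that the rescaled target flattens to a \emph{perpendicular} half-plane. Under an anisotropic normalization this is not automatic. If $w_k$ and $d_k$ denote the tangential and normal axes of the John ellipsoid of $S_{h_k}(u,x_k)$, the rescaled boundary near the base point is the graph of $d_k^{-1}\Gamma_X(w_k x_1)$, which tends to zero only if $\Gamma_X(w_k)=o(d_k)$. Boundary sections can be very eccentric with $w_k \gg d_k$ (this is exactly what happens when $u$ behaves like $x_1^2+x_2^{1+\gamma}$ with $\gamma<1$, where $w_h\sim h^{1/2}\gg h^{1/(1+\gamma)}\sim d_h$), and a mere $C^1$ bound $\Gamma_X(s)=o(s)$ does not force $\Gamma_X(w_k)/d_k\to 0$. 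Controlling precisely this ratio is the content of Lemmas~\ref{lem: bdry does not flatten} and \ref{lem: dual bdry does not flatten}, which use $C^{1,1}$ uniform convexity, the mass-balance relation \eqref{eqn: mass balance doubling}, and—crucially—are carried out only \emph{after} strict obliqueness is known, since obliqueness is what permits the normalizing maps to be taken diagonal (via the shearing $\Theta$) and the dual scales to be read off. So your proposal in effect assumes the conclusion of the very analysis it is meant to initiate. A related unaddressed point: the tangent direction of $\partial Y$ at $\nabla u(x_k)$ is only \emph{close} to $\e_2$, and under the highly eccentric map $A_k^{-t}$ closeness of directions is not preserved; the asserted convergence of $\partial \tilde Y_k$ to $\{y_1=0\}$ would require a quantitative link between the obliqueness defect at $x_k$ and the eccentricity of $A_k$, which you do not have. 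Smaller but real issues: Proposition~\ref{prop: uni density} and Corollaries~\ref{cor: image of section is comp to dual section}, \ref{cor: volume product} are proved for bounded (polynomially) convex domains and cannot be quoted verbatim for the unbounded limit; and your final rigidity step needs ``interior maps to interior'' for $u_\infty$, which is a different statement from the boundary-to-boundary inclusion you invoke (it can be recovered from injectivity of both limit gradient maps plus invariance of domain, but it must be argued).

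For comparison, the paper avoids blow-up entirely: assuming orthogonality at a boundary point, it runs a maximum-principle/barrier argument on $u_2$ (the degenerate factors contribute the terms $\alpha/d_{\partial X}$ and $\beta/d_{\partial Y}$, and $\max\{\alpha,\beta\}>0$ makes the comparison work), obtaining $u\leq C x_2^2$ near the point and, by duality, $v\leq C y_1^2$; these bounds make the sections $S_h(u,0)$ and $S_h(v,0)$ so large that the boundary volume-product estimate of Corollary~\ref{cor: volume product} is violated as $h\to 0$, because $\Gamma_X'(0)=\Gamma_Y'(0)=0$ forces $\Gamma_X^{-1}(ch^{1/2})\,\Gamma_Y^{-1}(ch^{1/2})/h\to\infty$. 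If you want to salvage a blow-up proof, you would first need an independent argument for the flattening of both rescaled domains under the (possibly very eccentric) section normalizations—this is where the real work lies.
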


The proof of this estimate follows the proof of the same estimate in the work of Savin and Yu (\cite[Section 3]{SY}).
We show that orthogonality (as opposed to strict obliqueness) is at odds with the volume estimate for sections.

\begin{proof}
By an approximation argument, we may assume that $a$ and $b$ are $C^1$ and that $X$ and $Y$ are $C^2$ and uniformly convex (cf. \cite{SY}).

Let us assume, without loss of generality, that $0 \in \partial X$, $\{ x_2 = 0 \}$ is tangent to $X$ (at $0$), $X \subset \{ x_2 > 0 \}$, $u(0) = 0$, and $\nabla u(0) = 0$. 
Now suppose, to the contrary, that we have orthogonality instead of strict obliqueness; then $\{ y_1 = 0\}$ is tangent to $Y$ (at $0 = \nabla u(0)$), and, without loss of generality, $Y \subset \{ y_1 > 0 \}$.
Set
\[
\Omega := \{ x \in X : x_2 < d_0 \} \cap \nabla v(\{ y \in Y : y_2 > 0 \}).
\]
(Recall $v$ is the minimal convex extension outside of $Y$ of the Legendre transform of $u$.)
Also, define
\[
\psi := \frac{\diam(Y)}{d_0}x_2.
\]

Notice that
\begin{equation}
\label{eqn: dirichlet}
u_2 \leq \psi \text{ on }\partial \Omega \cap X.
\end{equation}
Moreover, if $u_{12} = u_{21}$ along $\partial X$, then
\begin{equation}
\label{eqn: hopf}
u_{21} \geq 0 \text{ along } \partial X \cap \overline{\Omega}.
\end{equation}
Indeed, first, since $Y \subset \{ y_1 > 0 \}$ and $Y$ is tangent of the positive $y_2$-axis, the image under $\nabla u$ moves to the left as we move along $\partial X$ from the left toward the origin; if $\Gamma_X$ determines $\partial X$ near the origin,
\[
u_1(x_1, \Gamma_X(x_1)) \geq u_1(z_1, \Gamma_X(z_1)) \text{ if } x_1 < z_1 \leq 0.
\]
As $u_1(z_1,\Gamma_X(x_1)) \geq u_1(x_1,\Gamma_X(x_1))$, by the convexity of $u$, we deduce that
\[
u_1(z_1,\Gamma_X(x_1)) \geq u_1(z_1, \Gamma_X(z_1)). 
\]
Finally, since $\Gamma_X(x_1) > \Gamma_X(z_1)$, it follows that
\[
u_{21}(z_1,\Gamma_X(z_1)) = u_{12}(z_1,\Gamma_X(z_1))\geq 0,
\]
letting $x_1$ tend to $z_1$. 
Suppose the maximum of $u_2 - \psi$ is achieved at some $z \in \Omega$.
Then $\nabla u_2(z) = \nabla \psi(z)$.
And setting
\[
L w := u^{ij}\partial_{ij}w,
\]
we find that, at $z$,
\[
\begin{split}
0 \geq L (u_2 - \psi) &= \alpha \frac{\nu_{\partial X_d} \cdot \e_2}{d_{\partial X}} + \frac{a_2}{a} - \frac{\diam(Y)}{d_0}\bigg(\beta \frac{\nu_{\partial Y_d}(\nabla u) \cdot \e_2}{d_{\partial Y}(\nabla u) } + \frac{b_2(\nabla u) }{b(\nabla u)}\bigg)
\\
&\geq \alpha \frac{\nu_{\partial X_d} \cdot \e_2}{d_{\partial X}}- C_a - \frac{\diam(Y)}{d_0}\bigg(\beta \frac{\nu_{\partial Y_d}(\nabla u) \cdot \e_2}{d_{\partial Y}(\nabla u) } +  C_b\bigg) 
\\
&> 0.
\end{split}
\]
Indeed, $\nu_{\partial X_d} \cdot \e_2 > 0$, if $d_0 \ll 1$.
Moreover, $\nu_{\partial Y_d}(\nabla u) \cdot \e_2 < 0$, as $\nabla u(\Omega) \subset \{y_2 > 0 \}$ and, also, provided that $d_0 \ll 1$.
Finally, if $d_0 \ll 1$, then the terms with $d_{\partial \ast }$ in the denominator will be large enough to absorb the constants $C_a$ and $C_b$.
(Recall $\max\{\alpha, \beta\} > 0$.)
Here, for example,
\[
\partial X_d := \{ x \in  X : d_{\partial X}(x) = d \}
\]
and $\nu_{\partial X_d}$ is the unit normal to $\partial X_d$ oriented to point inside $\{ x \in X : d_{\partial X}(x) > d \}$.
(See, e.g., \cite{Giusti}.)
This is a contradiction.
In turn, by \eqref{eqn: dirichlet} and \eqref{eqn: hopf}, the maximum of $u_2 - \psi$ is achieved on $\partial \Omega \cap X$, which implies that
\begin{equation}
\label{eqn: max p s.o.}
u \leq \frac{\diam(Y)}{d_0} x_2^2 \text{ in } \Omega.
\end{equation}

Unfortunately, we cannot guarantee that $u_{12} = u_{21}$ along $\partial X$.
Therefore, we consider the following approximation scheme: let $\nabla u^k$ be the solution to the optimal transport problem taking 
\[
f^k := (1-k^{-1})f + k^{-1}\|f\|_{L^1(X)} \text{ to }
g^k := (1-k^{-1})g + k^{-1}\|g\|_{L^1(Y)}.
\]
By \cite[Remark 2.1]{SY}, $\nabla u^k$ converges to $\nabla u$ locally uniformly in $\R^2$, and, similarly, $\nabla v^k$ converges to $\nabla v$ locally uniformly in $\R^2$.
(Here $v^k$ is the minimal convex potential associated to the optimal transport problem taking $g^k$ to $f^k$, and $v^k$, in $Y$, agrees with the Legendre transform of $u^k$.)
Set 
\[
\Omega_k := \{ x \in X : x_2 < d_0 \} \cap \nabla v^k(\{ y \in Y : y_2 > 0 \}).
\]
Then $\overline{\Omega}_k$ converges to $\overline{\Omega}$ in the Hausdorff sense.
Since $f_k$ and $g_k$ are positive and H\"older continuous, $u^k \in C^2(\overline{X})$ by Caffarelli's boundary regularity theory (\cite{C3}).
So $u^k_{21} = u^k_{12}$ along $\partial X$.
In turn, by the formal maximum principle argument above,
\[
u^k \leq \frac{\diam(Y)}{d_0} x_2^2 \text{ in } \Omega_k.
\]
Taking the limit, $k \to \infty$, proves \eqref{eqn: max p s.o.}.

In summary, if we have orthogonality rather than strict obliqueness,
\begin{equation}
\label{eqn: strict obliqueness upper barriers}
u \leq Cx_2^2 \text{ in $\Omega(u)$ and } v \leq Cy_1^2 \text{ in $\Omega(v)$},
\end{equation}
where the estimate on $v$ is by duality.

To conclude, let $\Gamma_Y$ determine $\partial Y$ near the origin.
Corollaries~\ref{cor: volume product} and \ref{cor: image of section is comp to dual section} and \eqref{eqn: strict obliqueness upper barriers} applied in succession imply 
\[
\begin{split}
Ch^2 &\geq |S_h(u,0)||\nabla u(S_h(u,0))| 
\\
&\geq c|S_h(u,0)||S_h(v,0)|  
\\
&\geq c|\{ x \in \overline{X} : x_1 \leq 0, x_2 \leq c h^{\frac{1}{2}} \}||\{ y \in \overline{Y} : y_2 \leq 0, y_1 \leq c h^{\frac{1}{2}} \}|
\\
&
\geq ch^{\frac{1}{2}}\Gamma_X^{-1}(ch^{\frac{1}{2}})h^{\frac{1}{2}}\Gamma_Y^{-1}(ch^{\frac{1}{2}}).
\end{split}
\]
Dividing through by $h^2$ yields
\[
C \geq \frac{ \Gamma_X^{-1}(ch^{\frac{1}{2}}) }{ch^{\frac{1}{2}}} \frac{ \Gamma_Y^{-1}(ch^{\frac{1}{2}}) }{ch^{\frac{1}{2}}}.
\]
Since $\Gamma_X'(0) = \Gamma_Y'(0) = 0$, by assumption, $(\Gamma_X^{-1})'(0) = (\Gamma_Y^{-1})'(0) = + \infty$.
But this implies that the right-hand side above tends to infinity when $h$ tends to zero, which is impossible.
\end{proof}

\subsection{Blow-ups}
\label{sec: blow-ups}

In this section, we blow-up.
That said, in order to blow-up to the flat setting studied in Section~\ref{sec: The Flat Case}, we have to not only use Lemma~\ref{lem: strict obliqueness} but choose the right transformation to normalize sections.

\subsubsection{A First Normalization}

Up to a translation and subtracting an affine function, we assume that
\[
0 \in \partial X \cap \partial Y,\, u(0) = 0 = v(0),\, \text{and } \nabla u(0) = 0 = \nabla v(0).
\] 
From our strict obliqueness estimate, a shearing transformation exists that aligns the inner unit normals of $\partial X$ and $\partial Y$ at the origin, which after a rotation can be prescribed.
In particular, a $\Theta$ exists such that
\[
\tilde{X} := \Theta^{-1}X \subset \{ x_n > 0\} \text{ and } \tilde{Y} := \Theta^tY \subset \{ y_n > 0\}
\]
have $\{ x_2 = 0 \}$ and $\{ y_2 = 0\}$ as tangent planes to their boundaries at $0$ respectively.
Moreover, $\det \Theta = 1$.
Then defining
\[
\tilde{u}(x) := u(\Theta x),
\]
we find that
\[
(\nabla \tilde{u})_{\#} \tilde{f} = \tilde{g}
\]
where
\[
\tilde{f} = \tilde{a} d^\alpha_{\partial \tilde{X}} \text{ with } \tilde{a}(x) := a(\Theta x) \bigg[\frac{d_{\partial X}(\Theta x)}{d_{\partial \tilde{X}}(x)} \bigg]^\alpha
\]
and
\[
\tilde{g} = \tilde{b} d^\beta_{\partial \tilde{Y}} \text{ with } \tilde{b}(y) := b(\Theta^{-t} y) \bigg[\frac{d_{\partial Y}(\Theta^{-t} y)}{d_{\partial \tilde{Y}}(y)} \bigg]^\beta.
\]
By \cite[Lemma~6.1]{LS}, $\tilde{a} \in C^{\mu}(\tilde{X})$ and $\tilde{a} > 0$.
Similarly, $\tilde{b} \in C^{\omega}(\tilde{Y})$ and $\tilde{b} > 0$.

\begin{remark}
The restrictions on $\lambda$ with respect to $\alpha$ and $\beta$ explicitly, rather than via $\gamma$, come from this normalization.
Indeed, $\tilde{a}$, for instance, as the product of two H\"{o}lder continuous functions, will be H\"{o}lder continuous.
Yet between the two H\"{o}lder exponents it could inherit, it will inherit the smaller one.
\end{remark}

It will be convenient to suppress the tildes in these definitions, and write $u$ rather than $\tilde{u}$, for example.

\subsubsection{A New Ellipsoid}

In the proof of Lemma~\ref{lem: pure normal comp to 1}, we found ellipsoids comparable to sections whose axes were parallel to the coordinate axes.
The same construction applies here.
The only difference is that the $\delta$ in \eqref{eqn: new ellipsoid} now depends on the doubling constants of $f$ and $g$, dimension, and the Lipschitz semi-norm of $\partial X$.
For simplicity, we let $w_h = w_{1;h}$, as defined a few lines above \eqref{eqn: new ellipsoid}.
It will be convenient to normalize these ellipsoids as we blow-up.

\subsubsection{A Blow-up Limit}

In this section, we show that the boundaries of $X$ and $Y$ flatten as we normalize.
Since $X$ and $Y$ are $C^{1,1}$ and uniformly convex, near the origin,
\[
\{ x_2 > p^{-1} x_1^2 \} \subset X \subset \{ x_2 > p x_1^2 \}
\text{ and }
\{ y_2 > q^{-1} y_1^2 \} \subset Y \subset \{ y_2 > q y_1^2 \}
\]
for two constants $p, q > 0$.

\begin{lemma}
\label{lem: bdry does not flatten}
For each $k \in \N$, there is an $h_k > 0$  such that $d_{h_k} > k w_{h_k}^2$.
\end{lemma}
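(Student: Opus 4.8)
The plan is to prove directly that $d_h \geq c\,h^{-\gamma/(1+\gamma)}\,w_h^2$ for all sufficiently small $h$; since $\gamma=\tfrac{1+\alpha}{1+\beta}>0$, this forces $d_h/w_h^2\to\infty$ as $h\to0^+$, so any $h_k$ chosen small enough satisfies $d_{h_k}>k\,w_{h_k}^2$. I would obtain this from three estimates, the last being the real work. The first is a universal lower bound $d_h\gtrsim w_h^2$: letting $\mathcal{E}_h$ be the coordinate-axis-aligned ellipsoid comparable to $S_h(u,0)$ constructed just above (as in the proof of Lemma~\ref{lem: pure normal comp to 1}), with horizontal semi-axis $\sim w_h$, vertical semi-axis $\sim d_h$, and center at height $\sim d_h$, the inclusion $c_0\mathcal{E}_h\subset S_h(u,0)\subset\overline{X}\subset\{x_2\geq c\,x_1^2\}$ near the origin, evaluated at the horizontal extreme point of $c_0\mathcal{E}_h$ (where $x_1\sim w_h$ and $x_2\sim d_h$), yields $d_h\gtrsim w_h^2$. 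By the symmetric setup the same holds for the Legendre dual: $d^\ast_h\gtrsim(w^\ast_h)^2$, where $w^\ast_h:=w_h(v,0)$ and $d^\ast_h:=d_h(v,0)$.

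The second estimate is the mass-balance scaling $w_h^2\,d_h^{\,2+\alpha+\beta}\sim h^{\,2+\beta}$. Using $a\sim1$, $d_{\partial X}(x)\sim x_2-\Gamma_X(x_1)$ near $\partial X$, and the comparability of $S_h(u,0)\cap X$ with $\mathcal{E}_h$, I would show $\int_{S_h(u,0)\cap X}a\,d_{\partial X}^\alpha\sim w_h\,d_h^{\,1+\alpha}$: the upper bound is immediate from $d_{\partial X}\leq x_2\lesssim d_h$, while the lower bound follows by fitting a box of size $\sim w_h\times d_h$ that lies over $\partial X$ inside the section, which is possible thanks to the first estimate. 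By Corollaries~\ref{cor: sections are ellipsoids},~\ref{cor: equivalence of sections} and~\ref{cor: image of section is comp to dual section}, $\nabla u(S_h(u,0))$ is comparable to $S_h(v,0)$ and to the coordinate-aligned ellipsoid with semi-axes $\sim h/w_h$ and $\sim h/d_h$; hence $w^\ast_h\sim h/w_h$, $d^\ast_h\sim h/d_h$, and the same reasoning on $Y$ gives $\int_{S_h(v,0)\cap Y}b\,d_{\partial Y}^\beta\sim(h/w_h)(h/d_h)^{1+\beta}$. Equating the two integrals via Lemma~\ref{lem: Brenier soln} and using $(1+\beta)(1+\gamma)=2+\alpha+\beta$ produces the relation.

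The third, and hardest, estimate is the normal growth bound $u(0,t)\leq C\,t^{1+\gamma}$ for small $t>0$, equivalently $d_h\gtrsim h^{1/(1+\gamma)}$. The plan is a barrier argument in the spirit of (but simpler than) the proofs of Proposition~\ref{prop: osc decay at bdry} and Lemma~\ref{lem: strict obliqueness}: compare $u$ on $S_h(u,0)$ from above with an explicit function, built from $x_1^2$ and the $(1+\gamma)$-th power of a regularized distance to $\partial X$, normalized to equal $h$ on $\partial S_h(u,0)$ and to grow like $t^{1+\gamma}$ along the $x_2$-axis, and arranged to be an Alexandrov supersolution of the Monge--Amp\`ere equation inside $S_h(u,0)$; since only a one-sided, non-sharp bound is needed, the constants can be chosen crudely. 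The main obstacle is that the right-hand side $a\,d_{\partial X}^\alpha/(b(\nabla u)\,d_{\partial Y}(\nabla u)^\beta)$ carries the factor $d_{\partial Y}(\nabla u)^{-\beta}$, which is singular exactly where $\nabla u$ meets $\partial Y$, i.e.\ along $\partial X$ itself; I would control it as in the proof of Lemma~\ref{lem: strict obliqueness}, invoking the strict obliqueness estimate and the uniform convexity of $Y$ to see that $d_{\partial Y}(\nabla u)$ degenerates on $S_h(u,0)$ comparably to a power of $d_{\partial X}$, so that the comparison function is genuinely a supersolution.

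Granting the second and third estimates, $w_h^2\sim h^{\,2+\beta}/d_h^{\,2+\alpha+\beta}\leq C\,h^{\,2+\beta}/(c\,h^{1/(1+\gamma)})^{2+\alpha+\beta}=C'\,h$, since $\tfrac{2+\alpha+\beta}{1+\gamma}=1+\beta$ and $2+\beta-(1+\beta)=1$. Hence $d_h/w_h^2\geq(c/C')\,h^{1/(1+\gamma)-1}=(c/C')\,h^{-\gamma/(1+\gamma)}\to\infty$ as $h\to0^+$, and picking $h_k$ small enough gives $d_{h_k}>k\,w_{h_k}^2$, as desired.
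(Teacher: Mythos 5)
Your Steps 1 and 2 are fine and are also used by the paper: $d_h\geq c\,p\,w_h^2$ is immediate from the convexity of $X$, and the balancing relation $w_h^2d_h^{2+\alpha+\beta}\sim h^{2+\beta}$ is the curved-domain version of \eqref{eqn: mass balance doubling}. The genuine gap is Step 3, the growth bound $u(0,t)\leq Ct^{1+\gamma}$ for the original, non-rescaled $u$. To make your comparison function an Alexandrov supersolution you need a lower bound on $\det D^2u = a\,d_{\partial X}^\alpha/\bigl(b(\nabla u)\,d_{\partial Y}(\nabla u)^\beta\bigr)$, hence a quantitative upper bound of the form $d_{\partial Y}(\nabla u)\lesssim d_{\partial X}^{\gamma}$ on the region of comparison; with anything weaker (e.g.\ the trivial bound $d_{\partial Y}(\nabla u)\leq \diam Y$) the admissible barriers have Monge--Amp\`ere measure $\lesssim d_{\partial X}^{\alpha}$, whose model profile grows like $x_2^{2+\alpha}$, and the linear-in-$x_2$ term one is forced to add in order to dominate $u$ on the far portion of any fixed comparison region reduces the conclusion to the useless $u(0,t)\leq Ct$. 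But $d_{\partial Y}(\nabla u)\lesssim d_{\partial X}^{\gamma}$ is, near $0$, equivalent to $u_2\lesssim x_2^{\gamma}$, i.e.\ it is one half of the very estimate you are trying to prove. It does not follow from Lemma~\ref{lem: strict obliqueness} (strict obliqueness only gives transversality of the normals at boundary points; in its proof the singular factor $d_{\partial Y}(\nabla u)^{-1}$ enters only through its sign, never with a rate), nor from uniform convexity of $Y$. In the paper this comparability is available only in the flat model, via Lemma~\ref{lem: pure normal comp to 1}, whose proof rests on the Pogorelov estimate of Proposition~\ref{prop: pogo tang} and hence on the flat boundary (even reflection across $\{x_n=0\}$); no curved-boundary analogue is established at this stage. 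Note also that you are claiming the quantitative bound $d_h\geq c\,h^{-\gamma/(1+\gamma)}w_h^2$ for \emph{all} small $h$, which is strictly stronger than the lemma (a sequence suffices) and is really only recoverable a posteriori from Theorem~\ref{thm: global C2 in plane}.

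The paper's route is designed exactly to sidestep this circularity: assume, for contradiction, $d_h\leq \tilde k\,w_h^2$ for all small $h$, renormalize by $A_h=\diag(w_h,d_h)$, and pass to a limit. Under the contradiction hypothesis $X_h$ converges to a domain $\{x_2>\tilde p(x_1)\}$ pinched between two parabolas while $Y_h\to\{y_2>0\}$, so in the limit the troublesome quantity $d_{\partial Y}(\nabla u)$ becomes exactly $\tilde u_2$ and the equation takes the clean form $\det D^2\tilde u=\tilde m\,a(0)(x_2-\tilde p(x_1))^{\alpha}/(b(0)\,\tilde u_2^{\beta})$. Then a maximum-principle barrier is applied to the \emph{derivative} $\tilde u_2$ against $Cx_2^{\gamma}$ with the linearized operator $\tilde u^{ij}\partial_{ij}$, using $x_2-\tilde p(x_1)\leq x_2$ and $\alpha=\beta\gamma+\gamma-1$; this yields $\tilde u\leq Cx_2^{1+\gamma}$, hence $d_t(\tilde u,0)\geq c\,t^{1/(1+\gamma)}$, which, transferred back to $u_h$ and combined with the balancing condition, contradicts $d_{th}/w_{th}^2\leq\tilde k$ as $t\to0$. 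In other words, the compactness step manufactures precisely the control of the denominator that your direct barrier needs but cannot justify. To salvage your outline you would have to either prove a curved-boundary analogue of Lemma~\ref{lem: pure normal comp to 1} directly (a substantial task the paper deliberately avoids) or adopt the contradiction/blow-up structure.
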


\begin{proof}
Suppose the lemma fails to hold; that is, a $\tilde{k} \in \N$ exists for which
\[
d_h \leq \tilde{k} w_h^2 \text{ for all } h \leq \tilde{h}.
\]
Let
\[
A_h := \diag(w_h,d_h),\, X_h := A_h^{-1}X,\, \text{and } Y_h := h^{-1}A_h^t Y,
\]
and consider the normalized potentials
\[
u_h(x) := \frac{u(A_h x)}{h}
\text{ and } 
v_h(x) := \frac{v(A_h^{-t}h y)}{h}
\]
along with their normalized densities
\[
f_h(x) := ( \det A_h )f(A_h x)
\text{ and }
g_h(y) := h^2 ( \det A_h^{-t})g(A_h^{-t}hy).
\]
Now observe that
\[
\{ x_2 > d_h^{-1}w_h^2 p^{-1} x_1^2 \} \subset X_h \subset \{ x_2 > d_h^{-1}w_h^2p x_1^2 \}
\]
and
\[
\{ y_2 > h d_h w_h^{-2} q^{-1} y_1^2 \} \subset Y_h \subset \{ y_2 > h d_h w_h^{-2}q y_1^2 \}.
\]
By convexity,
\[
d_h \geq cpw_h^2.
\]
Thus, taking $\tilde{k}$ larger if needed,
\[
\frac{1}{\tilde{k}} \leq cp \leq \frac{d_h}{w_h^2} \leq \tilde{k}.
\]
Hence, up to a subsequence, as $h \to 0$, we find two limiting domains $\tilde{X}$ and $\tilde{Y}$ such that
\[
\{ x_2 > \tilde{k}p^{-1} x_1^2 \} \subset \tilde{X} = \{ x_2 > \tilde{p}(x_1) \} \subset \{ x_2 > p \tilde{k}^{-1} x_1^2 \},
\]
for some convex $\tilde{p}$ that vanishes only at $0$, and
\[
\tilde{Y} = \{ y_2 > 0 \}.
\]
Moreover, recalling our balancing condition (\eqref{eqn: mass balance doubling}),
\[
\frac{1}{C} \leq m_h := \frac{d_h^{2 + \alpha +\beta} w_{h}^2}{h^{2+\beta}} \leq C,
\] 
we find a convex function $\tilde{u}$, smooth in $\tilde{X}$, and such that
\[
\det D^2\tilde{u}(x) = \tilde{m} \frac{a(0)(x_2 - \tilde{p}(x_1))^\alpha}{b(0)\tilde{u}_2^\beta} \text{ in } \{ x_2 > \tilde{p}(x_1) \}
\]
and
\[
\tilde{u}_2 = 0 \text{ along } \{ x_2 = \tilde{p}(x_1) \}.
\]
Up to multiplying $\tilde{u}$ by a constant, we may assume that $\tilde{m} a(0)/b(0) = 1$.
(The constant $\tilde{m} = \lim_{h \to 0} m_h$.)

Set
\[
Lw :=  \tilde{u}^{ij} \partial_{ij} w.
\]
First, notice that there is a $C > 0$ such that
\[
Cx_2^{\gamma} \geq \tilde{u}_2 \text{ on } \partial S_1(\tilde{u},0).
\] 
If $s := \sup_{S_1(\tilde{u},0)} (\tilde{u}_2-Cx_2^{\gamma})$ is achieved at $\tilde{x} \in S_1(\tilde{u},0)$, then $\tilde{u}_2(\tilde{x}) = C\tilde{x}_2^{\gamma} + s$ for some $s \geq 0$, and
\[
\begin{split}
0 &\geq L(\tilde{u}_2 - Cx_2^{\gamma})(\tilde{x})
\\
&= \alpha \frac{1}{\tilde{x}_{2} - \tilde{p}(\tilde{x}_1)} - \beta \frac{C\gamma \tilde{x}_{2}^{\gamma-1}}{C\tilde{x}_{2}^\gamma + s} - C^{-1}\gamma^{-1} \tilde{x}_{2}^{1-\gamma} C \gamma (\gamma - 1)\tilde{x}_{2}^{\gamma - 2}
\\
&\geq \alpha \frac{1}{\tilde{x}_{2} - \tilde{p}(\tilde{x}_{1})} - (\beta \gamma + \gamma - 1) \frac{1}{\tilde{x}_{2}}
\\
& > 0,
\end{split}
\]
provided $\alpha > 0$, an impossibility.
(Because $\tilde{u}_2$ and $Cx_2^\gamma + s$ touch at $\tilde{x}$, their gradients agree at $\tilde{x}$.
Since $\tilde{u}^{ij}$ is the inverse of $\tilde{u}_{ij}$, we have that $\tilde{u}_{12} \tilde{u}^{12}  + \tilde{u}^{22}\tilde{u}_{22} = 1$.
Moreover, $\tilde{u}_{12}(\tilde{x}) = (Cx_2^\gamma +s)_1(\tilde{x}) = 0$.
In turn, $\tilde{u}^{22}(\tilde{x})\tilde{u}_{22}(\tilde{x}) = 1$, which explains the second equality line.)
So we find that
\[
\tilde{u} \leq Cx_2^{1+\gamma} \text{ in } S_1(\tilde{u},0).
\]
If $\alpha = 0$, then consider the power $\gamma-\epsilon$ rather than $\gamma$, with $\epsilon > 0$ arbitrary but small.
In particular, $Cx_2^{\gamma-\epsilon}$, for some $C > 0$ independent of $\epsilon$, is an upper barrier; the right-hand side, in this case, becomes $\epsilon(1+\beta)\tilde{x}_2^{-1} > 0$.
Applying the maximum principle and then sending $\epsilon$ to zero, yields the same inequality.
Thus,
\[
d_t(\tilde{u},0) \geq ct^{\frac{1}{1+\gamma}}.
\]

Since $u_h$ converges to $\tilde{u}$ locally uniformly,
\[
d_t(u_h,0) \geq \frac{c}{2}t^{\frac{1}{1+\gamma}} \text{ for all } h \ll 1.
\]
Moreover,
\[
d_t(u_h,0) = \frac{d_{th}(u)}{d_h(u)} \text{ and } w_t(u_h,0) = \frac{w_{th}(u)}{w_h(u)}.
\]
So our balancing condition holds for $u_h$ as well after replacing $C$ by $C^2$.
In turn,
\[
w_t(u_h,0)^2 \leq Ct.
\]
Then
\[
c \frac{ t^{\frac{1}{1+\gamma}} }{t} 
\leq \frac{d_t(u_h,0)}{w_t(u_h,0)^2} 
= \frac{d_{th}}{d_h} \frac{w_h^2}{w_{th}^2} 
\leq \tilde{k}^2.
\]
But taking $t \ll 1$, we find this inequality impossible.
\end{proof}

Swapping the roles of $u$ and $v$ and $\alpha$ and $\beta$ (also $p$ and $q$), we find a dual lemma.

\begin{lemma}
\label{lem: dual bdry does not flatten}
For each $k \in \N$, there is an $h_k > 0$ such that $k h_k d_{h_k} < w_{h_k}^2$.
\end{lemma}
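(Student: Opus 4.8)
The plan is to obtain Lemma~\ref{lem: dual bdry does not flatten} as the Legendre dual of Lemma~\ref{lem: bdry does not flatten}, exactly as the remark preceding the statement indicates; no new blow-up argument is needed. Recall that $v$ --- the minimal convex extension outside $Y$ of the Legendre transform of $u$ --- is the convex potential of the optimal transport taking $g = bd_{\partial Y}^\beta$ to $f = ad_{\partial X}^\alpha$, and that after the normalization of Section~\ref{sec: blow-ups} we have $0 \in \partial Y$, $v(0) = 0$, $\nabla v(0) = 0$, and $\{y_2 = 0\}$ tangent to $Y$ at $0$, with $Y$ still $C^{1,1}$ and uniformly convex. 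Thus $v$ satisfies precisely the hypotheses used in the proof of Lemma~\ref{lem: bdry does not flatten}, with $(\alpha,\beta)$, the densities $(g,f)$, and the convexity constants $(q,p)$ in the roles played there by $(\alpha,\beta)$, $(f,g)$, and $(p,q)$: that proof used the geometry of $X$ and $Y$ only through their $C^{1,1}$ uniform convexity (so that the relevant domain flattens in the rescaled limit) and through the balancing condition \eqref{eqn: mass balance doubling}, all of which are symmetric under the exchange $u \leftrightarrow v$, $\alpha \leftrightarrow \beta$, $X \leftrightarrow Y$. Writing $\hat w_h := w_h(v,0)$ and $\hat d_h := d_h(v,0)$ for the tangential and normal extents of $S_h(v,0)$, defined for $v$ as $w_{1;h}$ and $d_h$ were defined for $u$ in the proof of Lemma~\ref{lem: pure normal comp to 1}, Lemma~\ref{lem: bdry does not flatten} applied to $v$ yields: for every $m \in \N$ there is an $h_m > 0$ with $\hat d_{h_m} > m\,\hat w_{h_m}^2$.

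The one genuine ingredient is the duality dictionary
\[
\hat w_h \sim \frac{h}{w_h} \text{ and } \hat d_h \sim \frac{h}{d_h},
\]
which I would justify as follows: by Corollary~\ref{cor: equivalence of sections} the section $S_h(u,0)$ is comparable to its centered counterpart, and by the construction in the proof of Lemma~\ref{lem: pure normal comp to 1} this centered section is comparable to an ellipsoid with coordinate-parallel axes, of tangential radius $\sim w_h$ and normal radius $\sim d_h$; by Corollary~\ref{cor: sections are ellipsoids} its image under $\nabla u$ is then comparable to the reciprocal ellipsoid with axes scaled by $h$, i.e.\ tangential radius $\sim h/w_h$ and normal radius $\sim h/d_h$; and by Corollary~\ref{cor: image of section is comp to dual section} this image is comparable to $S_h(v,\nabla u(0)) = S_h(v,0)$, whose extents are $\hat w_h$ and $\hat d_h$. (The displayed comparabilities are also consistent with the two instances of the balancing condition \eqref{eqn: mass balance doubling}, one for $u$ and one for $v$.)

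Finally, substituting the dictionary into $\hat d_{h_m} > m\,\hat w_{h_m}^2$ and clearing denominators gives $w_{h_m}^2 > c\,m\,h_m d_{h_m}$ for a constant $c > 0$ independent of $m$. Given $k \in \N$, I would apply the above with any integer $m$ satisfying $m \geq k/c$ and set $h_k := h_m$, which produces $k h_k d_{h_k} < w_{h_k}^2$, as desired. I expect the main (and essentially only nontrivial) obstacle to be pinning down the duality relations $\hat w_h \sim h/w_h$ and $\hat d_h \sim h/d_h$ --- specifically, verifying that the ellipsoid comparable to $S_h(v,0)$ inherits coordinate-parallel axes matched, via Corollary~\ref{cor: image of section is comp to dual section}, to the reciprocals of the axes of the ellipsoid built for $S_h(u,0)$; everything else is a transcription of the proof of Lemma~\ref{lem: bdry does not flatten}.
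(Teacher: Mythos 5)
Your proposal is correct and is essentially the paper's own argument: the paper proves Lemma~\ref{lem: dual bdry does not flatten} by the one-line remark that one swaps the roles of $u$ and $v$, $\alpha$ and $\beta$, and $p$ and $q$ in Lemma~\ref{lem: bdry does not flatten}, which is exactly your application of that lemma to $v$. The duality dictionary $w_h(v,0) \sim h/w_h$, $d_h(v,0) \sim h/d_h$ that you single out as the only nontrivial step is the same comparability of $S_h(v)$ with the dual ellipsoid that the paper already invokes (via Corollaries~\ref{cor: sections are ellipsoids} and \ref{cor: image of section is comp to dual section}) in the proof of Lemma~\ref{lem: pure normal comp to 1} and in Section~\ref{sec: blow-ups}, so your write-up just makes the intended duality explicit.
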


A consequence of Lemmas~\ref{lem: bdry does not flatten} and \ref{lem: dual bdry does not flatten} is that the boundaries of $X$ and $Y$ flatten under $A_h^{-1}$ and $hA_h^t$ respectively; up to a subsequence, in the Hausdorff sense,
\[
X_h \to \{ x_2 > 0 \} \text{ and } Y_h \to \{ y_2 > 0 \},
\]
as $h \to 0$.
In turn, up to multiplication by a constant and the same subsequence, locally uniformly in $\R^2$,
\[
u_h \to \bar{u} = \bar{P}(x_1) + \bar{p}\frac{\gamma^{\frac{\beta}{1+\beta}} }{(1+\gamma)\gamma} x_2^{1+\gamma},
\]
by Theorem~\ref{thm: liouville}.
Furthermore,
\[
|\nabla \bar{u}| \leq \frac{1}{r} \text{ in } S_1(\bar{u},0),
\]
where $r > 0$ is the constant from Corollary~\ref{cor: sections are ellipsoids}.

\subsection{Conclusion}
Up to a multiplication by a constant and determinant $1$ transformation in the $x'$ variables (both depending on the constant $r > 0$ from Corollary~\ref{cor: sections are ellipsoids}, i.e., only on the doubling constants of $f$ and $g$), we find that, after choosing $h > 0$ sufficiently small, the rescaling
\[
\tilde{u} := u_h
\]
satisfies the hypothesis of Proposition~\ref{prop: C2lambda pointwise}.
First, notice that 
\[
\tilde{u}(0) = 0 = |\nabla \tilde{u}(0)|,
\]
which is \eqref{eqn: tanget at 0 is 0}, since $u(0) = 0 = |\nabla u(0)|$.
Second, for any $\epsilon > 0$,
\[
|\tilde{u} - U| \leq \epsilon  \text{ in } \mathcal{C}_2,
\]
by the conclusion of the previous section.
(The constant and determinant $1$ transformation turn $\bar{u}$ into $U$.)
By convexity,
\[
\nabla \tilde{u}(\tilde{X} \cap \mathcal{C}_1) \subset \tilde{Y} \cap \mathcal{C}_{1/\rho}^\ast.
\]
And, by construction, $\nabla \tilde{u}$ maps $\partial \tilde{X}$ to $\partial \tilde{Y}$.
Also,
\[
\{ x_2 > d_h^{-1}w_h^2t^{\frac{\gamma}{1+\gamma}}p^{-1}  x_1^2  \}\subset \tilde{X} \subset \{ x_2 > 0 \}
\]
and
\[
\{ y_2 > h d_h w_h^{-2} t^{\frac{1}{1+\gamma}} q^{-1} y_1^2 \} \subset \tilde{Y} \subset \{ y_2 > 0 \}.
\]
So \eqref{eqn: bdrys geometrically flat} follows by Lemmas~\ref{lem: bdry does not flatten} and \ref{lem: dual bdry does not flatten} as well as the definition of $\lambda$.
These estimates on $\partial \tilde{X}$ and $\partial \tilde{Y}$ together with the estimates
\[
|\tilde{a}(x) - 1| \leq \delta \epsilon|x|^\mu \text{ and } |\tilde{b}(y) - 1| \leq \delta \epsilon|y|^\omega
\]
imply that the inequalities on $\det D^2 \tilde{u}$ hold.
Thus, applying Proposition~\ref{prop: C2lambda pointwise} proves that $u$ is $C^{2+\lambda}$ at $0$ when $\gamma \geq 1$ and $C^{1+\gamma(1+\omega)}$ at $0$ when $\gamma < 1$.

Since $\Theta$ and $r > 0$ are uniform over points in $\partial X$, Theorem~\ref{thm: global C2 in plane} is proved.


\end{document}